\newtheorem{theorem}{Theorem}[section]
\newtheorem{lemma}[theorem]{Lemma}
\newtheorem{corollary}[theorem]{Corollary}
\newtheorem{definition}[theorem]{Definition}
\newtheorem{example}[theorem]{Example}
\newtheorem{proposition}[theorem]{Proposition}
\newtheorem{remark}[theorem]{Remark}
\newcommand{\Image}{\mathrm{Im}}
\newcommand{\Ker}{\mathrm{Ker}}
\newcommand{\tot}{\mathrm{Tot}}
\newcommand{\RD}{\mathrm{R}}
\newcommand{\rk}{\mathrm{rk}}
\newcommand{\Hom}{\mathrm{Hom}}
\newcommand{\Ext}{\mathrm{Ext}}
\newcommand{\Int}{\mathrm{Int}}
\newcommand{\inner}[1]{\langle #1\rangle}
\newcommand{\bb}[1]{\mathbb{#1}}
\newcommand{\cu}[1]{\mathcal{#1}}
\newcommand{\til}[1]{\widetilde{#1}}
\newcommand{\msc}[1]{\mathscr{#1}}
\newcommand{\ol}[1]{\overline{#1}}
\newcommand{\mf}[1]{\mathfrak{#1}}
\newcommand{\ul}[1]{\underline{#1}}
\newcommand{\Spec}[1]{\mathrm{Spec}({#1})}
\def\codim{\mathrm{codim}}
\begin{document}
	
	\title[]{Tropical Lagrangian multi-sections \\and\\ tropical locally free sheaves}
	\author[Suen]{Yat-Hin Suen}
	\address{Center for Geometry and Physics\\ Institute for Basic Science (IBS)\\ Pohang 37673\\ Republic of Korea}
	\email{yhsuen@ibs.re.kr}
	\date{\today}

	\begin{abstract}
    This article is a continuation of the work \cite{CMS_k3bundle}. We generalize the notion of tropical Lagrangian multi-sections to any dimensions. Together with some linear algebra data, we construct a special class of locally free sheaves, called tropical locally free sheaves. We will also provide the reverse construction and show that there is a 1-1 correspondence between isomorphism classes of tropical locally free sheaves and tropical Lagrangian multi-sections modulo certain equivalence.
	\end{abstract}
	
	\maketitle
	
	\tableofcontents
	
\section{Introduction}

The Gross-Siebert program \cite{GS1,GS2,GS11} gives an algebro-geometric understanding of SYZ mirror symmetry \cite{SYZ}. In \cite{CMS_k3bundle}, together with Chan and Ma, the author of this paper attempted to understand homological mirror symmetry \cite{HMS} in terms of the Gross-Siebert setup. We introduced there the notion of \emph{tropical Lagrangian multi-sections} over any 2-dimensional integral affine manifold of singularities $B$ equipped with polyhedral decomposition $\msc{P}$ and constructed, by fixing certain local model, a locally free sheaf $\cu{E}_0$ over the associated scheme $X_0(B,\msc{P})$. We also provided a nice combinatorial condition for smoothability of the pair $(X_0(B,\msc{P}),\cu{E}_0)$ under some extra assumptions.  

In this article, we will generalize the notion of tropical Lagrangian multi-sections to any dimension. We begin by reviewing some preliminary of the Gross-Siebert program in Section \ref{sec:GS}. In Section \ref{sec:trop_Lag}, we introduce the notion of tropical Lagrangian multi-sections over any integral affine manifold with singularities $B$ equipped with a polyhedral decomposition $\msc{P}$. For this purpose, we need the notion of \emph{tropical spaces}, which has been introduced in \cite{trop_spaces}. Roughly speaking, tropical spaces are spaces that allow us to talk about sheaf of affine and piecewise linear functions. A tropical Lagrangian multi-section $\bb{L}$ over $(B,\msc{P})$ is then a branched covering map $\pi:(L,\msc{P}',\mu)\to(B,\msc{P})$ between tropical spaces that respect the polyhedral decomposition $\msc{P},\msc{P}'$ together with a multi-valued piecewise linear function $\varphi$ on $L$. Here $\mu:\msc{P}\to\bb{Z}_{>0}$ is the multiplicity map. The multi-valued function $\varphi$ should be thought of as certain tropical limit of the local potential of a Lagrangian multi-section of the SYZ fibration.

Given a tropical Lagrangian multi-section $\bb{L}$ over $(B,\msc{P})$. Due to its discrete nature, one shouldn't expect $\bb{L}$ can determine a sheaf on $X_0(B,\msc{P})$ uniquely. Therefore, we need to prescribe some continuous data on top of the discrete data determined by $\bb{L}$. We will introduce in \ref{sec:constructing_E_0} two continuous data $({\bf{g}},{\bf{h}})$ which guarantee the existence of a locally free sheave on $X_0(B,\msc{P})$. The idea is to apply the technique in \cite{Suen_trop_lag} to construct a collection of toric vector bundles $\{\cu{E}({\bf{g}}(\tau'))\}_{\pi(\tau')=\tau}$ on each toric piece $X_{\tau}$ by using the multi-valued piecewise linear function $\varphi$ on $L$. The rank of each $\cu{E}({\bf{g}}(\tau'))$ is given by the ramification degree of the cell $\tau'\in\msc{P}'$. Put
$$\cu{E}({\bf{g}}(\tau)):=\bigoplus_{\pi(\tau')=\tau}\cu{E}({\bf{g}}(\tau')),$$
which is a toric vector bundle on $X_{\tau}$, whose rank is exactly the degree of the branched covering map $\pi:L\to B$. Then we glue the vector bundles $\{\cu{E}({\bf{g}}(\tau))\}_{\tau\in\msc{P}}$ together by using the data ${\bf{h}}$. However, in general, we may encounter an extra twisting data $\ol{s}$ when we glue the toric strata $\{X_{\tau}\}_{\tau\in\msc{P}}$ together. In this case, there is an obstruction $o_{\bb{L}}([\ol{s}])\in  H^2(L,\bb{C}^{\times})$ for gluing $\{\cu{E}({\bf{g}}(\tau))\}_{\tau\in\msc{P}}$. This obstruction generalizes the obstruction maps appear in \cite{GS1}, Theorem 2.34 (the ample line bundle case) and \cite{CMS_k3bundle}, Theorem 5.5 (the 2-dimensional case).

\begin{theorem}[=Theorem \ref{thm:close}]
    Suppose $\ol{s}$ is the associated closed gluing data of an open gluing data $s$. The locally free sheaves $\{\cu{E}({\bf{g}}(\tau))\}_{\tau\in\msc{P}}$ can be glued to a rank $r$ locally free sheaf on $X_0(B,\msc{P}, s)$ via the data $({\bf{g}},{\bf{h}})$ if and only if $o_{\bb{L}}([\ol{s}])=1$.
\end{theorem}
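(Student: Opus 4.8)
The plan is to phrase the gluing of $\{\cu{E}({\bf{g}}(\tau))\}_{\tau\in\msc{P}}$ as a \v{C}ech problem and to identify the failure of the cocycle condition with the class $o_{\bb{L}}([\ol{s}])$. First I would fix, for each pair of cells $\tau_1,\tau_2\in\msc{P}$ sharing a common face, a candidate transition isomorphism over the corresponding overlap, built from the data ${\bf{h}}$ together with the closed gluing data $\ol{s}$. The collection $\{\cu{E}({\bf{g}}(\tau))\}$ then glues to a global locally free sheaf on $X_0(B,\msc{P},s)$ precisely when these transition isomorphisms can be arranged to satisfy the cocycle condition on triple overlaps, so the entire content of the theorem is to show that the obstruction to this is exactly $o_{\bb{L}}([\ol{s}])\in H^2(L,\bb{C}^\times)$.

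The reason the obstruction is naturally supported on $L$ rather than on $B$ is the decomposition $\cu{E}({\bf{g}}(\tau))=\bigoplus_{\pi(\tau')=\tau}\cu{E}({\bf{g}}(\tau'))$. A transition isomorphism must respect this splitting, pairing the summand indexed by $\tau_1'$ with the one indexed by $\tau_2'$ exactly when $\tau_1'$ and $\tau_2'$ lie over a common overlap and share a face in $\msc{P}'$; the branched covering $\pi:(L,\msc{P}',\mu)\to(B,\msc{P})$ records precisely this pairing. On each summand I would glue the toric bundles $\cu{E}({\bf{g}}(\tau'))$ using the isomorphisms supplied by the technique of \cite{Suen_trop_lag} and encoded in ${\bf{h}}$, then insert the toric automorphism prescribed by $\ol{s}$. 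The essential structural input here is that each $\cu{E}({\bf{g}}(\tau'))$ is, by construction, an indecomposable toric bundle, so any equivariant automorphism of it is a scalar --- a Schur-type rigidity that confines the ambiguity in the transition data to $\bb{C}^\times$.

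With the transition maps fixed, I would compute their composite around each triple overlap. By the rigidity above, this composite acts on the summand indexed by $\tau'$ as multiplication by a scalar $c(\tau')\in\bb{C}^\times$, and as the triple varies these scalars assemble into a \v{C}ech $2$-cochain on $L$. I would verify that it is in fact a $2$-cocycle, and that its cohomology class in $H^2(L,\bb{C}^\times)$ is, by definition, $o_{\bb{L}}([\ol{s}])$. The cocycle condition can be met --- equivalently, the transition maps admit scalar corrections making them a genuine gluing datum --- if and only if this $2$-cocycle is a coboundary, that is, $o_{\bb{L}}([\ol{s}])=1$, which is the claimed equivalence.

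The hard part will be twofold. First, I must establish that the composite around a triple overlap is genuinely scalar on each summand, which relies on the indecomposability of $\cu{E}({\bf{g}}(\tau'))$ and on careful bookkeeping of how $\ol{s}$ permutes and rescales the summands through the branched covering $\pi$. Second, I must check that the resulting class does not depend on the auxiliary choices: a different choice of ${\bf{h}}$ compatible with ${\bf{g}}$ changes the transition maps by a $1$-cochain and hence alters the $2$-cocycle only by a coboundary, so that $o_{\bb{L}}([\ol{s}])$ depends only on $\bb{L}$ and the class $[\ol{s}]$. Once these are in place, the matching of the cocycle with the definition of $o_{\bb{L}}$ is essentially bookkeeping, and the final descent from a trivial obstruction to an actual global locally free sheaf is standard.
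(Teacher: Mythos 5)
Your overall architecture (build candidate transition isomorphisms from ${\bf{h}}$ and $\ol{s}$, measure the failure of the cocycle condition on triple overlaps by a $\bb{C}^\times$-valued $2$-cocycle on $L$, and identify its class with $o_{\bb{L}}([\ol{s}])$) matches the paper's. But the step you lean on to get that the triple composite is \emph{scalar} does not work as stated, and it is the crux of the argument. You assert that each $\cu{E}({\bf{g}}(\tau'))$ is ``by construction an indecomposable toric bundle'' and invoke Schur-type rigidity. Neither half is right. A $\tau'$-Kaneyama data only produces a rank $\mu(\tau')$ toric bundle on $X_\tau$; nothing forces it to be indecomposable, and indeed the paper's later Proposition \ref{prop:L_L_E} exists precisely because $\cu{E}({\bf{g}}(\tau'))$ can split further (so that $\bb{L}_{\cu{E}_0(\bb{L},{\bf{D}}_s)}$ is generally only a covering of $\bb{L}$, not equal to it). Moreover, even for an indecomposable bundle over a complete variety, $\mathrm{End}$ is only a local ring, so an automorphism is a scalar plus a nilpotent --- not enough to extract a well-defined element of $\bb{C}^\times$ on each summand. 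The paper gets scalarity with no rigidity input at all: it computes $H_{\ol{s}}(g_3)^{-1}\circ F_{\ol{s}}(g_2)^*H_{\ol{s}}(g_1)\circ H_{\ol{s}}(g_2)$ explicitly, uses the slope cancellation trick to collapse the monomial exponents, and uses condition (H3) on ${\bf{h}}$ to collapse the matrix part to the identity, leaving exactly multiplication by $\ol{s}_{\tau_1\tau_2\tau_3}(\sigma^{(\alpha)})^{-1}=\bigl(\ol{s}_{g_1}(m_{\tau_2}(\sigma^{(\alpha)}))\,\ol{s}_{g_2}(m_{\tau_3}(\sigma^{(\alpha)}))\,\ol{s}_{g_3}(m_{\tau_3}(\sigma^{(\alpha)}))^{-1}\bigr)^{-1}$. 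Note this cocycle involves only $\ol{s}$ and the slopes of $\varphi$, not ${\bf{h}}$; the theorem is stated for a fixed pair $({\bf{g}},{\bf{h}})$, so the well-definedness you need (Lemma \ref{lem:obs_class}) is independence of the choice of $\sigma^{(\alpha)}\supset\tau_3'$ and of the local representatives of $\varphi$, not independence of ${\bf{h}}$ up to coboundary.

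Two further points. First, when $o_{\bb{L}}([\ol{s}])=1$ you still have to produce the corrected transition maps: the paper writes $\ol{s}_{\tau_1'\tau_2'\tau_3'}=k_{\tau_1'\tau_2'}k_{\tau_2'\tau_3'}k_{\tau_1'\tau_3'}^{-1}$ and inserts the factors $k_{\tau_1^{(\alpha)}\tau_2^{(\alpha)}}^{-1}$ into $H_{\ol{s}}(g)$ to get genuinely cocyclic maps $\til{H}_{\ol{s}}(g)$; this extra datum ${\bf{k}}_s$ becomes part of the brane data. Second, the ``standard descent'' at the end is not free: the colimit of the $\cu{E}({\bf{g}}(\tau))$ over the barycentric diagram is not obviously locally free, and the paper spends the second half of the proof constructing explicit frames $\til{1}_{\sigma^{(\alpha)}}(\tau)$ on each $V_{\ol{s}}(\sigma)$ and checking (using (H1), (H3) and the relation $\ol{s}_g(m_\tau(\sigma^{(\alpha)}))=k_{v^{(\alpha)}\tau^{(\alpha)}}k_{\tau^{(\alpha)}\sigma^{(\alpha)}}k_{v^{(\alpha)}\sigma^{(\alpha)}}^{-1}$) that they are compatible under the $\til{H}_{\ol{s}}(g)$, which yields the trivializations $\psi_\sigma$. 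You should replace the rigidity argument by the explicit computation and supply these two missing steps.
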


The vanishing of $o_{\bb{L}}([\ol{s}])$ will give us another continuous data ${\bf{k}}_s$, which can be combined with ${\bf{h}}$ to form ${\bf{h}}_s$. We then write ${\bf{D}}_s$ for the data $({\bf{g}},{\bf{h}}_s)$ and denote by $\msc{D}_s(\bb{L})$ the set of all such data. Theorem 1.1 says that a choice of data ${\bf{D}}_s\in\msc{D}_s(\bb{L})$ will provide us a locally free sheaf $\cu{E}_0(\bb{L},{\bf{D}}_s)$ on the scheme $X_0(B,\msc{P},s)$.

In \cite{FOOO1}, the notion of unobstructed Lagrangian submanifolds (\cite{AJ} for immersed Lagrangian submanifolds) was introduced. The main feature of an unobstructed Lagrangian submanifolds is that its Floer cohomology is well-defined and hence defines an object in the Fukaya category. In particular, unobstructed Lagrangian submanifolds should have the corresponding mirror objects. Therefore, we borrow this terminology here. Namely, for a fixed gluing data $s$, a tropical Lagrangian multi-section $\bb{L}$ over $(B,\msc{P})$ is said to be \emph{unobstructed} if $\msc{D}_s(\bb{L})\neq\emptyset$ and a pair $(\bb{L},{\bf{D}}_s)$ is called a \emph{tropical Lagrangian brane} (Definition \ref{def:unobstructed}).

Section \ref{sec:E_to_L} will be devoted to the reverse construction. Based on the construction in Section \ref{sec:constructing_E_0}, we introduce the notion of \emph{tropical locally free sheaves} (Definition \ref{def:tropical}). To such a locally free sheaf $\cu{E}_0$, we are able to construct a canonical tropical Lagrangian multi-section $\bb{L}_{\cu{E}_0}$. Moreover, there is a natural data ${\bf{D}}_s(\cu{E}_0)\in\msc{D}_s(\bb{L}_{\cu{E}_0})$ such that
$$\cu{E}_0\cong\cu{E}_0(\bb{L}_{\cu{E}_0},{\bf{D}}_s(\cu{E}_0)).$$
We will end Section \ref{sec:E_to_L} by showing that we actually have an abundant sources of examples given by restricting toric vector bundles on the toric boundary of a toric variety (Theorem \ref{thm:many_examples}).

Given $(\bb{L},{\bf{D}}_s)$, Section \ref{sec:constructing_E_0} has taught us how to construct a tropical locally free sheaf $\cu{E}_0:=\cu{E}_0(\bb{L},{\bf{D}}_s)$ while Section \ref{sec:E_to_L} has provided a canonical tropical Lagrangian brane $(\bb{L}_{\cu{E}_0},{\bf{D}}_s(\cu{E}_0))$ out of $\cu{E}_0$. It is natural to compare $(\bb{L},{\bf{D}}_s)$ and $\bb{L}_{\cu{E}_0}$. In general, we may not have $\bb{L}=\bb{L}_{\cu{E}_0}$. This indicates the phenomenon that non-Hamiltonian isotopic or topologically different Lagrangian submanifolds can still be equivalent to each other in the derived (immersed) Fukaya category. See, for example, \cite{CS_SYZ_imm_Lag}. Therefore, we would still like to regard them as the same object. This brings us to Section \ref{sec:equiv}, where we will introduce the notion of \emph{combinatorial equivalence} of tropical Lagrangian branes and prove the following

\begin{theorem}[=Theorem \ref{thm:bijection}]
    We have a canonical bijection
    $$\cu{F}:\frac{\{\text{Tropical locally free sheaves on }X_0(B,\msc{P},s)\}}{\text{isomorphism}}\to\frac{\{(\bb{L},{\bf{D}}_s)\,|\,\bb{L}\text{ being unobstructed and }{\bf{D}}_s\in\msc{D}_s(\bb{L})\}}{\text{combinatorial equivalence}},$$
    given by $\cu{E}_0\mapsto(\bb{L}_{\cu{E}_0},{\bf{D}}_s(\cu{E}_0))$. Its inverse is given by $(\bb{L},{\bf{D}}_s)\mapsto\cu{E}_0(\bb{L},{\bf{D}}_s)$.
\end{theorem}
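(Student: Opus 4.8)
The plan is to establish $\cu{F}$ as a well-defined bijection by exhibiting an explicit two-sided inverse, using the constructions of the two preceding sections as the raw material and reducing everything to checking compatibility with the respective equivalence relations.

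First I would verify that $\cu{F}$ is well-defined at the level of isomorphism classes: if $\cu{E}_0\cong\cu{E}_0'$ as tropical locally free sheaves, then the canonically associated data $(\bb{L}_{\cu{E}_0},{\bf{D}}_s(\cu{E}_0))$ and $(\bb{L}_{\cu{E}_0'},{\bf{D}}_s(\cu{E}_0'))$ must be combinatorially equivalent. Since the tropical Lagrangian multi-section $\bb{L}_{\cu{E}_0}$ is extracted from the discrete data of $\cu{E}_0$ (the branched covering recording the ramification degrees via the ranks of the toric summands, together with the multi-valued piecewise linear function recovered from the toric structure on each stratum), an isomorphism of sheaves should induce an isomorphism of the associated combinatorial structures, which is precisely what combinatorial equivalence is designed to encode. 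I would carry this out by tracing an isomorphism $\cu{E}_0\cong\cu{E}_0'$ through its restriction to each toric stratum $X_{\tau}$, where it becomes an isomorphism of toric vector bundles, and checking that the resulting identifications assemble into the data defining a combinatorial equivalence.

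Next I would show that the candidate inverse $(\bb{L},{\bf{D}}_s)\mapsto\cu{E}_0(\bb{L},{\bf{D}}_s)$ descends to combinatorial equivalence classes, i.e.\ that combinatorially equivalent branes produce isomorphic sheaves; this is the reverse direction and should follow by building the stratum-wise isomorphisms of toric vector bundles prescribed by the equivalence data and verifying they are compatible with the gluing ${\bf{h}}_s$. Then the two composites must be shown to be the identity. One composite, $\cu{E}_0\mapsto(\bb{L}_{\cu{E}_0},{\bf{D}}_s(\cu{E}_0))\mapsto\cu{E}_0(\bb{L}_{\cu{E}_0},{\bf{D}}_s(\cu{E}_0))$, is already handled by the isomorphism
$$\cu{E}_0\cong\cu{E}_0(\bb{L}_{\cu{E}_0},{\bf{D}}_s(\cu{E}_0))$$
recorded in Section \ref{sec:E_to_L}. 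For the other composite, starting from $(\bb{L},{\bf{D}}_s)$, I would need to show that the canonical brane read off from the sheaf $\cu{E}_0(\bb{L},{\bf{D}}_s)$ is combinatorially equivalent to the original $(\bb{L},{\bf{D}}_s)$; this is the step where the quotient by combinatorial equivalence is genuinely used, since one does not expect equality on the nose.

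The main obstacle I anticipate is precisely this last identification: reconstructing the covering $\pi:(L,\msc{P}',\mu)\to(B,\msc{P})$ and the multi-valued function $\varphi$ from the sheaf and matching them back to the starting data only up to the allowed combinatorial moves. The difficulty is that the decomposition $\cu{E}({\bf{g}}(\tau))=\bigoplus_{\pi(\tau')=\tau}\cu{E}({\bf{g}}(\tau'))$ into toric line-bundle-type summands need not be canonical when summands coincide or are related by the toric automorphisms of $X_{\tau}$, so the recovered branched cover and the gluing data ${\bf{h}}_s$ are only well-defined up to an action that combinatorial equivalence must be shown to absorb. I would therefore spend the bulk of the argument checking that the ambiguities in splitting $\cu{E}_0$ stratum-by-stratum, together with the freedom in choosing the closed gluing cochain ${\bf{k}}_s$ trivializing $o_{\bb{L}}([\ol{s}])$, correspond exactly to the relation defining combinatorial equivalence, so that $\cu{F}$ and its inverse are genuinely inverse to one another on the quotients.
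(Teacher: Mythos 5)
Your plan matches the paper's proof in all essentials: well-definedness in both directions rests on Klyachko's uniqueness of indecomposable toric summands up to character shift (Corollary \ref{cor:shift}) together with the tautological definition of equivalence on the brane data, and the composite $\cu{E}_0\mapsto\cu{E}_0(\bb{L}_{\cu{E}_0},{\bf{D}}_s(\cu{E}_0))$ is exactly the isomorphism recorded in Section \ref{sec:E_to_L}. The one step you flag as the main obstacle is dispatched in the paper by Proposition \ref{prop:L_L_E}, which directly produces a covering morphism $\bb{L}_{\cu{E}_0(\bb{L},{\bf{D}}_s)}\to\bb{L}$ (folding the indecomposable summands back onto the original lifts $\tau'$); since combinatorial equivalence is generated precisely by such covering morphisms with compatibly pushed-forward data, the ambiguity-matching you anticipate spending most of the argument on is not needed.
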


Theorem 1.2 provides a slightly more geometric understanding of homological mirror symmetry in the sense that we don't need any derived objects on both sides to achieve the correspondence. Although a tropical Lagrangian multi-section is still not yet an honest Lagrangian multi-section, one should expect that an unobstructed Lagrangian multi-section can be constructed from the data $(\bb{L},{\bf{D}}_s)$. We left this for future research.

\section{The Gross-Siebert program}\label{sec:GS}

We give a brief review of how the scheme $X_0(B,\msc{P},s)$ is constructed. We follow \cite{GS1} and use the \emph{fan construction}.

Let $B$ be an integral affine manifold with singularities equipped with a polyhedral decomposition $\msc{P}$. Elements in $\msc{P}$ are celled \emph{cells}. Throughout the whole article, we assume $B$ is compact without boundary and all cells have no self-intersections. By taking the barycentric decomposition situation of $\msc{P}$, there is a canonical open cover $\cu{W}:=\{W_{\sigma}\}_{\sigma\in\msc{P}}$ of $B$ so that $W_{\tau}\cap W_{\sigma}\neq\emptyset$ if and only if $\tau\subset\sigma$. Denote by $\Delta\subset B$ the singular locus of $B$, which is a union of locally closed codimension 2 submanifolds inside the codimension 1 strata of $B$ and $\Lambda$ the lattice induced by the integral structure on $B\backslash\Delta$. For $\tau\in\msc{P}$, let
$$\Lambda_{\tau}:=\{v\in\Lambda_y:v\text{ is tangent to }\tau \text{ at }y\},$$
for any $y\in\Int(\tau)\backslash\Delta$. This lattice is independent of the choice of $y\in\Int(\tau)$. We also define
$$\cu{Q}_{\tau}:=\Lambda/\Lambda_{\tau}.$$
For $\tau\subset\sigma$, by parallel transport along a path in $B\backslash\Delta$ starting on $\Int(\tau)\backslash\Delta$ and ending on $\Int(\sigma)$, we get a projection $p:\cu{Q}_{\tau}\to\cu{Q}_{\sigma}$. We always assume $\msc{P}$ is \emph{toric}, that is, for each $\tau\in\msc{P}$, there is a submersion $S_{\tau}:W_{\tau}\to\cu{Q}_{\tau}\otimes_{\bb{Z}}\bb{R}$. This gives a complete fan
$$\Sigma_{\tau}:=\{r\cdot S_{\tau}(\sigma)\,|\,\sigma\supset\tau,r\geq 0\}$$
on $\cu{Q}_{\tau}\otimes_{\bb{Z}}\bb{R}$ and hence a complete toric variety $X_{\tau}$. To glue them together, Gross-Siebert introduced the category ${\textbf{Cat}}(\msc{P})$, whose objects are elements in $\msc{P}$ and
$$\Hom_{{\textbf{Cat}}(\msc{P})}(\tau,\sigma):=\begin{cases}
\emptyset & \text{ if }\tau\not\subset\sigma,\\
\{e\} & \text{ if }\tau\subset\sigma.
\end{cases}$$
For $e:\tau\to\sigma$, the inclusion $p_e^*:\cu{Q}_{\sigma}^*\to\cu{Q}_{\tau}^*$ induces a natural inclusion $F(e):X_{\sigma}\to X_{\tau}$ that satisfies
$$F(e_2\circ e_1)=F(e_1)\circ F(e_2),$$
for all $e_1:\sigma_1\to\sigma_2,e_2:\sigma_2\to\sigma_3$. Hence we can take the limit
$$\lim_{\longrightarrow}X_{\sigma}$$
to obtain an algebraic space. One can twist this construction by a cocycle $[\ol{s}]\in  H^1(\cu{W},\cu{Q}_{\msc{P}}\otimes\bb{C}^{\times})$. Such an extra twisting is called a \emph{closed gluing data for the fan picture}. Such cocycle give us for each $e:\tau\to\sigma$, an element $\ol{s}_e\in\cu{Q}_{\sigma}\otimes\bb{C}^{\times}$, which defines an automorphism $\ol{s}_e:X_{\sigma}\to X_{\sigma}$. Put
$$F_{\ol{s}}(e):=F(e)\circ\ol{s}_e.$$
Since $\ol{s}$ is a 1-cocycle, we have $\ol{s}_{e_1\circ e_2}=\ol{s}_{e_1}\circ\ol{s}_{e_2}$ and hence the $\ol{s}$-twisted limit
$$X_0(B,\msc{P},\ol{s}):=\lim_{\longrightarrow}X_{\sigma}$$
makes sense and exists in the category of algebraic spaces. In \cite{GS1}, Gross-Siebert also introduced an other more refined gluing data, called the open gluing data.

\begin{definition}
    An \emph{open gluing data for the fan picture }is a collection $s:=\{s_e\}_e$, where for $e:\tau\to\sigma$, $s_e:\Lambda_{\sigma}^*\to\bb{C}^{\times}$ is piecewise multiplicative with respective to the fan $\check{\tau}^{-1}\Sigma_{\check{\sigma}}$ such that
    \begin{enumerate}
        \item $s_{id}=1$, for $id:\sigma\to\sigma$ the identity morphism.
        \item If $e_3=e_2\circ e_1$, we have $s_{e_3}=s_{e_2}\cdot s_{e_1}$, whenever defined.
    \end{enumerate}
    An open gluing data $s$ is called \emph{trivial} if there exists $t=(t_{\sigma})_{\sigma\in\msc{P}}$, with $t_{\sigma}\in\Gamma(\sigma,\cu{Q}_{\msc{P}}\otimes\bb{C}^{\times})$ such that $s_e=t_{\tau}t_{\sigma}^{-1}|_{\tau}$, for all $e:\tau\to\sigma$. The set of all open gluing data is denoted by $Z^1(\msc{P},\cu{Q}_{\msc{P}}\otimes\bb{C}^{\times})$ and the set of all trivial open gluing data is denoted by $B^1(\msc{P},\cu{Q}_{\msc{P}}\otimes\bb{C}^{\times})$.
\end{definition}

The set of open gluing data modulo equivalence is parametrized by the group
$$ H^1(\msc{P},\cu{Q}_{\msc{P}}\otimes\bb{C}^{\times}):=\frac{Z^1(\msc{P},\cu{Q}_{\msc{P}}\otimes\bb{C}^{\times})}{B^1(\msc{P},\cu{Q}_{\msc{P}}\otimes\bb{C}^{\times})}.$$
For an open gluing data $s$, one associates the closed gluing data $\ol{s}$ as follows. For $e:\tau\to\sigma$, define $\ol{s}_e$ to be the image of $s$ under the composition
$$\Gamma(\tau,\cu{Q}_{\msc{P}}\otimes\bb{C}^{\times})\to\cu{Q}_{\tau}\otimes\bb{C}^{\times}\xrightarrow{\sim}\Gamma(W_{\tau},\cu{Q}_{\msc{P}}\otimes\bb{C}^{\times})\to\Gamma(W_e,\cu{Q}_{\msc{P}}\otimes\bb{C}^{\times}),$$
where the last map is given by restriction.
This induces the open-to-closed map
$$ H^1(\msc{P},\cu{Q}_{\msc{P}}\otimes\bb{C}^{\times})\to H^1(\cu{W},\cu{Q}_{\msc{P}}\otimes\bb{C}^{\times}),$$
which is injective by Proposition 2.32 in \cite{GS1}. An important consequence of open gluing data is that the algebraic space $X_0(B,\msc{P},\ol{s})$ can be built from some standard affine charts defined as follows. The open gluing data and the associated closed gluing data give the following commutative diagram
\begin{equation*}
        \xymatrix{
        {V_{\omega_2\to\sigma}} \ar[d]^{} \ar@{->}[r]^{s_{\omega_2\to\sigma}} & {V_{\omega_2\to\sigma}} \ar[d]^{\ol{s}_{\omega_1\to\omega_2}}
        \\ {V_{\omega_1\to\sigma}} \ar@{->}[r]^{s_{\omega_1\to\sigma}} & {V_{\omega_1\to\sigma}}
        }
\end{equation*}
The colimit of the left hand side
$$V(\sigma):=\lim_{\longrightarrow}V_{\tau\to\sigma}$$
is actually an affine scheme and hence one can construct an algebraic space $X_0(B,\msc{P},s)$ by gluing $\{V(\sigma)\}_{\sigma\in\msc{P}_{max}}$ and it was shown by using universal property of colimit that
$$X_0(B,\msc{P},s)\cong X_0(B,\msc{P},\ol{s}).$$
as algebraic spaces (Proposition 2.30 in \cite{GS1}). The fact that all cells have no self-intersections implies $X_0(B,\msc{P},s)$ is actually a scheme. Moreover, Proposition 2.32 in \cite{GS1} also showed that there is an isomorphism $X_0(B,\msc{P},s)\cong X_0(B,\msc{P},s')$ preserving toric strata if and only if $[s]=[s']$.

Assume $\msc{P}$ is simple and positive (see \cite{GS1} for their definitions). With a suitable choice of open gluing data $s$, Gross-Siebert have shown in \cite{GS1} that $X_0(B,\msc{P},s)$ carries a log structure that is log smooth off a codimension 2 locus $Z$, not containing any toric strata. Moreover, they proved in \cite{GS11} that $X_0(B,\msc{P},s)$ is smoothable to a formal family over $\Spec{\bb{C}[[t]]}$. In \cite{Ruddat_Siebert}, Ruddat and Siebert proved that this formal family is in fact an analytic family.

\section{Tropical Lagrangian multi-sections}\label{sec:trop_Lag}

We introduce the notion of tropical Lagrangian multi-sections over any integral affine manifold with singularities equipped with a polyhedral decomposition, generalizing the definition of tropical Lagrangian multi-sections in \cite{CMS_k3bundle}. We use the notion of tropical space introduced in \cite{trop_spaces}.

\begin{definition}\label{def:trop_space}
    A \emph{tropical piecewise linear space} is a pair $(X,\cu{PL}_X)$, where $\ul{X}$ is a Hausdorff paracompact topological space and $\cu{PL}_X$ is a sheaf of $\bb{R}$-valued continuous functions on $X$ such that for each $x\in X$, there is a neighborhood $U$ of $x$, an open subset $V$ of a polyhedral set in $\bb{R}^n$ for some $n$, a homeomorphism $\phi:U\to V$ and an isomorphism $\phi^{\#}:\phi^{-1}\cu{PL}_V\to\cu{PL}_U$. A \emph{tropical space} is a tropical piecewise linear space $(X,\cu{PL}_X)$ together with a choice of subsheaf $\cu{A}ff_X\subset\cu{PL}_X$ that contains the constant sheaf $\ul{\bb{R}}_X$. We simply write $X$ for $(X,\cu{PL}_X,\cu{A}ff_X)$ when there are no confusion on the tropical space structure.
\end{definition}

\begin{definition}
    Let $X$ be a tropical space. The \emph{sheaf of multi-valued piecewise linear functions} is defined to be the quotient sheaf $\cu{MPL}_X:=\cu{PL}_X/\cu{A}ff_X$.
\end{definition} 

There is a natural notion of morphisms between tropical spaces.

\begin{definition}
    Let $X,Y$ be tropical spaces. A \emph{morphism} from $f:X\to Y$ is a pair $f:=(\ul{f},f^{\#})$ where $\ul{f}:\ul{X}\to\ul{Y}$ is a continuous map between the underlying topological spaces and $f^{\#}:f^{-1}\cu{PL}_Y\to\cu{PL}_X$ is a morphism of sheaves that maps $f^{-1}\cu{A}ff_Y$ to $\cu{A}ff_X$ and $f^{-1}\ul{\bb{R}}_Y$ to $\ul{\bb{R}}_X$. A morphism of tropical spaces $f:X\to Y$ is said to be a \emph{submersion} if the induced map $f^{\#}:f^{-1}\cu{MPL}_Y\to\cu{MPL}_X$ is surjective.
\end{definition}

The following lemma is evident.

\begin{lemma}
    Let $\ul{X}$ be a topological space, $Y$ be a tropical space and $\ul{f}:\ul{X}\to \ul{Y}$ be a continuous map. The triple $(\ul{X},\ul{f}^{-1}\cu{PL}_Y,\ul{f}^{-1}\cu{A}ff_Y)$ is a tropical space.
\end{lemma}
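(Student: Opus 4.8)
The plan is to verify the three conditions required of a tropical space in Definition~\ref{def:trop_space}, namely that $(\ul{X},\ul{f}^{-1}\cu{PL}_Y)$ is a tropical piecewise linear space, that $\ul{f}^{-1}\cu{A}ff_Y$ is a subsheaf of $\ul{f}^{-1}\cu{PL}_Y$, and that this subsheaf contains the constant sheaf $\ul{\bb{R}}_X$. First I would establish that each stalk and each local section of $\ul{f}^{-1}\cu{PL}_Y$ still records an $\bb{R}$-valued continuous function on $X$: since $\ul{f}$ is continuous, the inverse image sheaf $\ul{f}^{-1}\cu{PL}_Y$ is well-defined, and its sections, being pullbacks of continuous real-valued functions, are themselves continuous real-valued functions after composition with $\ul{f}$. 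This handles the basic sheaf-theoretic setup.

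The main content is the local model axiom. Fix $x\in\ul{X}$ and set $y=\ul{f}(x)\in\ul{Y}$. Since $Y$ is a tropical space, there is a neighborhood $U'\ni y$, an open subset $V'$ of a polyhedral set in some $\bb{R}^n$, a homeomorphism $\phi':U'\to V'$, and an isomorphism $(\phi')^{\#}:(\phi')^{-1}\cu{PL}_{V'}\to\cu{PL}_{U'}$. The natural move is to pull this chart back along $\ul{f}$: take $U:=\ul{f}^{-1}(U')\ni x$, compose $\phi'\circ\ul{f}$ to land in $V'$, and observe that the inverse image of the local model on $V'$ is exactly the local model required for $\ul{f}^{-1}\cu{PL}_Y$ on $U$. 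Concretely, the chart isomorphism for $X$ is obtained by applying $\ul{f}^{-1}$ to $(\phi')^{\#}$ and using the compatibility of inverse image with composition, i.e. $(\ul{g}\circ\ul{f})^{-1}\cong\ul{f}^{-1}\circ\ul{g}^{-1}$ on sheaves. This gives the homeomorphism and sheaf isomorphism onto a local model as required, so $(\ul{X},\ul{f}^{-1}\cu{PL}_Y)$ is a tropical piecewise linear space.

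The remaining two points are formal. Applying the exact functor $\ul{f}^{-1}$ to the inclusion $\cu{A}ff_Y\hookrightarrow\cu{PL}_Y$ preserves injectivity, so $\ul{f}^{-1}\cu{A}ff_Y$ is a subsheaf of $\ul{f}^{-1}\cu{PL}_Y$. For the constant sheaf, one uses that $\ul{f}^{-1}\ul{\bb{R}}_Y\cong\ul{\bb{R}}_X$ for any continuous map (the inverse image of a constant sheaf is the constant sheaf on the same group), together with the fact that $\cu{A}ff_Y$ contains $\ul{\bb{R}}_Y$, whence $\ul{f}^{-1}\cu{A}ff_Y$ contains $\ul{f}^{-1}\ul{\bb{R}}_Y\cong\ul{\bb{R}}_X$.

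I expect the only subtle point to be the identification of the pulled-back local model as a genuine local model in the sense of Definition~\ref{def:trop_space}: one must check that $\ul{f}^{-1}\cu{PL}_Y$ near $x$ is isomorphic to the inverse image of $\cu{PL}_V$ for $V$ an open subset of a polyhedral set, and this is where the naturality isomorphism $(\phi'\circ\ul{f})^{-1}\cu{PL}_{V'}\cong\ul{f}^{-1}(\phi')^{-1}\cu{PL}_{V'}\cong\ul{f}^{-1}\cu{PL}_{U'}$ does the work. Everything else is a routine unwinding of the definitions, which is presumably why the paper calls the lemma \emph{evident}.
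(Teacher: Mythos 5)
The paper gives no argument for this lemma at all --- it is introduced with ``The following lemma is evident'' --- so there is no proof to compare yours against; what matters is whether your argument actually closes the gap the paper waves away, and it does not. The weak point is exactly the one you flag as ``the only subtle point,'' and the naturality isomorphism does not do the work you claim. Definition~\ref{def:trop_space} demands, for each $x\in X$, a \emph{homeomorphism} $\phi:U\to V$ onto an open subset $V$ of a polyhedral set together with an isomorphism $\phi^{\#}:\phi^{-1}\cu{PL}_V\to\cu{PL}_U$, where $\cu{PL}_V$ is the intrinsic sheaf of piecewise linear functions of the polyhedral set $V$. Your chart is $\phi'\circ\ul{f}:U=\ul{f}^{-1}(U')\to V'$, which is merely continuous: for a general continuous $\ul{f}$ it is neither injective nor open (think of $\ul{f}$ a branched covering near a ramification point, or a constant map from a large space). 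Even when one can find some homeomorphism of $U$ onto a polyhedral set, the sheaf you produce on $U$ is $\ul{f}^{-1}\cu{PL}_{U'}$, i.e.\ only those germs pulled back from $Y$, which in general is a proper subsheaf of the full PL sheaf of any polyhedral model of $U$; so the required isomorphism $\phi^{\#}$ need not exist. You also never address the standing requirement in Definition~\ref{def:trop_space} that $\ul{X}$ be Hausdorff and paracompact, which an arbitrary topological space is not.

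In fact these observations show the lemma is false if read completely literally, and is clearly meant with implicit hypotheses matching its actual uses in the paper: the pullback structure is only ever placed on $|\Sigma'|$ for a morphism of cone complexes $\Sigma'\to\Sigma$, and on $L=B\times_{\msc{P}}\msc{P}'$ in Appendix~\ref{sec:appA}, where Hausdorffness, paracompactness, and the local polyhedral models are available (the cells of $\msc{P}'$ map homeomorphically onto cells of $\msc{P}$, so a neighborhood of a point of $L$ is a union of polyhedral pieces glued along the branch locus, which is itself an open subset of a polyhedral set). A correct proof must therefore either add these hypotheses explicitly or verify them for the specific spaces at hand; the purely formal parts of your argument (exactness of $\ul{f}^{-1}$ giving the subsheaf inclusion, and $\ul{f}^{-1}\ul{\bb{R}}_Y\cong\ul{\bb{R}}_X$ giving containment of the constant sheaf) are fine, but they are not where the content of the lemma lies.
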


\begin{remark}
 Any cone complex induced by a fan in some $\bb{R}$-vector space is naturally a tropical space. If $\pi:\Sigma'\to\Sigma$ is a morphism of cone complexes and $\Sigma$ is a fan, we always assume the underlying topological space $|\Sigma'|$ is equipped with the pull-back tropical structure.
\end{remark}

Given two topological spaces $L,B$, a continuous map $\pi:L\to B$ and a function $\mu:L\to\bb{Z}_{>0}$. If for any $x\in B$, the preimage set $\pi^{-1}(x)$ is finite, then we can define a function $Tr_{\pi}(\mu):B\to\bb{Z}_{>0}$ by
$$Tr_{\pi}(\mu)(x):=\sum_{x'\in\pi^{-1}(x)}\mu(x').$$
Now we can define branched covering map between tropical spaces.

\begin{definition}\label{def:branched_covering_map}
    Let $L,B$ be tropical spaces. A \emph{branched covering map} is a surjective morphism $\pi:L\to B$ and a function $\mu:L\to\bb{Z}_{>0}$, called the \emph{multiplicity map}, such that
    \begin{enumerate}
         \item For any $x\in B$, the preimage set $\pi^{-1}(x)$ is finite.
         \item For any connected open sets $W\subset B$ and connected $W'\subset \pi^{-1}(W)$, the function $Tr_{\pi|_{W'}}(\mu)$ is constant on $W$.
    \end{enumerate}
    The \emph{degree} of $\pi$ is defined to be the positive constant $Tr_{\pi}(\mu)$.
\end{definition}

Let $B$ be an integral affine manifold with singularities equipped with a polyhedral decomposition $\msc{P}$. It carries a natural tropical space structure $\cu{A}ff_B,\cu{PL}_{\msc{P}}$. See \cite{GS1}, Section 1. Unless specified, we use this tropical space structure for $(B,\msc{P})$ without further notice.

\begin{definition}\label{def:poly_decomp}
    Let $B$ be an integral affine manifold with singularities and $\msc{P}$ a polyhedral decomposition. Let $\pi:(L,\mu)\to B$ be a branched covering map between tropical spaces. A \emph{polyhedral decomposition} $\msc{P}'$ of $\pi:(L,\mu)\to B$ is a locally finite covering of $L$ by closed subsets (called cells) such that
    \begin{enumerate}
        \item If $\sigma_1',\sigma_2'\in\msc{P}'$, then $\sigma_1'\cap\sigma_2'\in\msc{P}'$.
        \item If $\sigma'\in\msc{P}'$, then $\pi(\sigma')\in\msc{P}$.
        \item For any $\sigma'\in\msc{P}'$, define the \emph{relative interior of $\sigma'$} to be
        $$\Int(\sigma'):=\sigma'\,\big\backslash\bigcup_{\tau'\in\msc{P}':\tau'\subsetneq\sigma'}\tau'.$$
        The function $\mu|_{\Int(\sigma')}$ is constant and $\pi|_{\sigma'}:\Int(\sigma')\to\Int(\pi(\sigma))$ is an isomorphism of tropical spaces with respect to the pull-back tropical structures.
    \end{enumerate}
    A cell $\sigma'$ is called \emph{ramified} if $\mu(\sigma')>1$.
\end{definition}
	
\begin{remark}
Condition (3) implies piecewise linear functions on any cell $\sigma'$ are affine functions. We use the notations $\cu{PL}_{\msc{P}'}$ and $\cu{MPL}_{\msc{P}'}$ for the sheaf of piecewise linear functions and the sheaf of multi-valued piecewise linear functions on $L$, respectively.
\end{remark}

Given $\pi:(L,\msc{P}',\mu)\to(B,\msc{P})$. For $\tau'\in\msc{P}'$ and $\tau:=\pi(\tau')$, define $W_{\tau'}$ to be the connected component of $\pi^{-1}(W_{\tau})$ that contains $\Int(\tau')$.
    
\begin{definition}\label{def:local_model}
	Let $\pi:(L,\msc{P}',\mu)\to(B,\msc{P})$ be a branched covering map of tropical spaces equipped with polyhedral decompositions. Let $x'\in L$ and $\tau'$ be the unique cell so that $x'\in \Int(\tau')$. Put $\tau:=\pi(\tau')$. A \emph{fan structure} at $x'\in L$ is a branched covering map of connected cone complexes $\pi_{\tau'}:\Sigma_{\tau'}\to\Sigma_{\tau}$ and a submersion of tropical spaces $S_{\tau'}:W_{\tau'}\to|\Sigma_{\tau'}|$ such that
	$$\pi_{\tau'}\circ S_{\tau'}=S_{\tau}\circ\pi,$$
	where $S_{\tau}:W_{\tau}\to|\Sigma_{\tau}|$ is the projection defining the fan structure at $x$. The data $\pi:(L,\msc{P}',\mu)\to(B,\msc{P})$ is called \emph{toric} if it admits a fan structure at every point.
\end{definition}

Suppose $\pi:(L,\msc{P}',\mu)\to(B,\msc{P})$ is toric and $\varphi:W_{\tau'}\to\bb{R}$ is a piecewise linear function. Then there is an affine function $f':W_{\tau'}\to\bb{R}$ and a piecewise linear function $\varphi_{\tau'}$ on $|\Sigma_{\tau'}|$ such that
$$\varphi-f'=S_{\tau'}^*\varphi_{\tau'},$$
on $W_{\tau'}$.	For $\sigma'\in\msc{P}_{max}'$ contains $\tau'$, we denote by $m_{\tau}(\sigma')\in\cu{Q}_{\tau}^*$ the slope of $\varphi_{\tau'}$ on $S_{\tau'}(\sigma')$. For $g:\tau_1\to\tau_2$, choose a path $\gamma\subset U_{\tau_1}$, which goes from a point in $\Int(\tau_1)\backslash\Delta$ to a point in $\Int(\tau_2)\backslash\Delta$. Parallel transport along $\gamma$ gives a surjection $p_g:\cu{Q}_{\tau_1}\to\cu{Q}_{\tau_2}$. Given $\varphi\in  H^0(\cu{W}',\cu{MPL}_{\msc{P}'})$ and representatives $\{\varphi_{\tau'}\}$, for $\tau_1'\subset\tau_2'$, there exists an affine function $f_{\tau_1'\tau_2'}:W_{\tau_1'}\cap W_{\tau_2'}\to\bb{R}$ such that
$$S_{\tau_2'}^*\varphi_{\tau_2'}=S_{\tau_1'}^*\varphi_{\tau_1'}+f_{\tau_1'\tau_2'},$$
whenever defined. Therefore, via the inclusion $p_g^*:\cu{Q}_{\tau_2}^*\to\cu{Q}_{\tau_1}^*$, for any $\sigma'\supset\tau_2'$,
$$p_g^*m_{\tau_2}(\sigma')=m_{\tau_1}(\sigma')+m_{\tau_1'\tau_2'},$$
for some $m_{\tau_1'\tau_2'}\in\cu{Q}_{\tau_1}^*$ only depends on $\tau_1',\tau_2'$. We simply write
$$m_{\tau_2}(\sigma')=m_{\tau_1}(\sigma')+m_{\tau_1'\tau_2'}$$
if there is no confusion.

Now we can define the main object that we are going to study in this paper.

\begin{definition}\label{def:trop_lag}
	Let $B$ be an integral affine manifold with singularities and $\msc{P}$ a polyhedral decomposition. A \emph{tropical Lagrangian multi-section} $\bb{L}$ over $(B,\msc{P})$ is a toric branched covering of tropical spaces $\pi:(L,\msc{P}',\mu)\to(B,\msc{P})$ equipped with polyhedral decomposition, together with a global section $\varphi\in  H^0(L,\cu{MPL}_{\msc{P}'})$.
\end{definition}

There is a special type of morphisms between tropical Lagrangian multi-sections over the same base $(B,\msc{P})$.

\begin{definition}\label{def:covering_morphism}
    A \emph{covering morphism} of tropical Lagrangian multi-sections $f:\bb{L}_1\to\bb{L}_2$ over $(B,\msc{P})$ is a surjective morphism of tropical spaces $f:L_1\to L_2$, mapping cells in $\msc{P}_1'$ isomorphically onto cells in $\msc{P}_2'$ such that $\pi_1=\pi_2\circ f$, $Tr_f(\mu_1)=\mu_2$ and $\varphi_1=f^*\varphi_2$.
\end{definition}

\section{From tropical Lagrangian multi-sections to locally free sheaves}\label{sec:constructing_E_0}

Let $\bb{L}$ be a tropical Lagrangian multi-section over $(B,\msc{P})$ of degree $r$ and $ s$ an open gluing data. By thinking $\bb{L}$ as a Lagrangian multi-section of a Lagrangian torus fibration over $B$, the SYZ philosophy suggests the mirror of $\bb{L}$ should be a holomorphic vector bundle, whose rank is same as the degree of the covering $\bb{L}\to B$. Therefore, in this section, we would like to construct a rank $r$ locally free sheaf on $X_0(B,\msc{P},s)$. However, as mentioned in the introduction, one shouldn't expect $\bb{L}$ itself can determine a locally free sheaf due to its discrete nature. We need some extra continuous data in analogous to the linear algebra data defined in \cite{Kaneyama_classification}.

To begin, let $\tau\in\msc{P}$ and $\tau'\in\msc{P}'$ be a lift, we would like to construct a rank $\mu(\tau')$ locally free sheaf on the strata $X_{\tau}$. Let $V_{\tau\to\sigma}\subset X_{\tau}$ be the affine chart corresponds to the cone $K_{\tau\to\sigma}:=\bb{R}_{\geq 0}\cdot S_{\tau}(\sigma)\in\Sigma_{\tau}$. Define
$$\cu{E}_{\sigma}(\tau'):=\cu{O}_{V_{\tau\to\sigma}}^{\oplus\mu(\tau')}.$$
For $\sigma\in\msc{P}_{max}$ contains $\tau$, $\mu(\tau')$ equals to the number (count with multiplicity) of lifts of $\sigma$ that contain $\tau'$. We then obtain a frame $\{1_{\sigma^{(\alpha)}}(\tau)\}_{\sigma^{(\alpha)}\supset\tau'}$ for $\cu{E}_{\sigma}(\tau')$, parametrized by lifts of $\sigma$ contains $\tau'$, counting with multiplicity. To define transition maps, we use the function $\varphi$. By the toric assumption, there is a connected cone complex $\Sigma_{\tau'}$ over $\Sigma_{\tau}$ and a piecewise linear function $\varphi_{\tau'}$ such that $S_{\tau'}^*\varphi_{\tau'}$ represents $\varphi|_{W_{\tau'}}$. Let $\sigma'\in\msc{P}_{max}'$ be a lift of $\sigma$ contains $\tau'$ and $m_{\tau}(\sigma')\in\cu{Q}_{\tau}^*$ be the slope of $\varphi_{\tau'}$ on the cone $S_{\tau'}(\sigma')$. For $\sigma_1,\sigma_2\supset\tau$, define $G_{\sigma_1\sigma_2}(\tau'):\cu{E}_{\sigma_1}(\tau')|_{V_{\tau\to\sigma_1\cap\sigma_2}}\to\cu{E}_{\sigma_2}(\tau')|_{V_{\tau\to\sigma_1\cap\sigma_2}}$ by
$$G_{\sigma_1\sigma_2}(\tau'):1_{\sigma_1^{(\alpha)}}(\tau')\mapsto\sum_{\beta:\sigma_2^{(\beta)}\supset\tau'}g_{\sigma_1^{(\alpha)}\sigma_2^{(\beta)}}(\tau')z^{m_{\tau}(\sigma_1^{(\alpha)})-m_{\tau}(\sigma_2^{(\beta)})}1_{\sigma_2^{(\beta)}}(\tau'),$$
where $\{1_{\sigma^{(\alpha)}}(\tau')\}$ is a frame of $\cu{E}_{\sigma}(\tau')$. Put
$$G_{\sigma_1\sigma_2}(\tau):=\sum_{\tau':\pi(\tau')=\tau}G_{\sigma_1\sigma_2}(\tau').$$
The coefficient of each monomial entry of $G_{\sigma_1\sigma_2}(\tau)$ will be denoted by $g_{\sigma_1^{(\alpha)}\sigma_2^{(\beta)}}(\tau)\in\bb{C}$. We require them to satisfy the following
    
\begin{definition}\label{def:comp_data}
    Let $\tau'\in\msc{P}'$. A \emph{$\tau'$-Kaneyama data} is a collection of invertible matrices $${\bf{g}}(\tau'):=\{(g_{\sigma_1^{(\alpha)}\sigma_2^{(\beta)}}(\tau'))\}_{\tau'\subset\sigma_1^{(\alpha)},\sigma_2^{(\beta)}\in\msc{P}'_{max}}\subset GL(\mu(\tau'),\bb{C})$$
    such that
    \begin{enumerate}
        \item [(G1)] $g_{\sigma^{(\alpha)}\sigma^{(\beta)}}(\tau')=Id^{(\alpha\beta)}$, for all $\sigma\in\msc{P}_{max}$.
        \item [(G2)] $g_{\sigma_1^{(\alpha)}\sigma_2^{(\beta)}}(\tau')=0$ if  $m_{\tau}(\sigma_1^{(\alpha)})-m_{\tau}(\sigma_2^{(\beta)})\notin K_{\tau\to\sigma_1\cap\sigma_2}^{\vee}\cap\cu{Q}_{\tau}^*$.
        \item [(G3)] For any $\sigma_1,\sigma_2,\sigma_3\in\msc{P}_{max}$, we have
        \begin{equation*}
            \sum_{\beta:\sigma_2^{(\beta)}\supset\tau'}g_{\sigma_1^{(\alpha)}\sigma_2^{(\beta)}}(\tau')g_{\sigma_2^{(\beta)}\sigma_3^{(\gamma)}}(\tau')=g_{\sigma_1^{(\alpha)}\sigma_3^{(\gamma)}}(\tau'),
        \end{equation*}
        for all $\sigma_1^{(\alpha)},\sigma_3^{(\gamma)}\supset\tau'$.
        \end{enumerate}
    A collection of Kaneyama data ${\bf{g}}:=\{{\bf{g}}(\tau')\}_{\tau'\in\msc{P}'}$ is said to be \emph{compatible} if ${\bf{g}}(\tau')$ is $\tau'$-compatible for all $\tau'\in\msc{P}'$ and for each $g:\tau_1\to\tau_2$, there exist a collection of $r\times r$ matrices matrix $${\bf{h}}(g):=\{(h_{\sigma^{(\alpha)}\sigma^{(\beta)}}(g))_{\alpha,\beta}\}_{\sigma\in\msc{P}_{max}}\subset GL(r,\bb{C})$$
    such that
    \begin{enumerate}
        \item [(H1)] For any $\sigma\in\msc{P}_{max}$ contains $\tau_2$, we have $h_{\sigma^{(\alpha)}\sigma^{(\beta)}}(g)\neq 0$ only if $\sigma^{(\alpha)},\sigma^{(\beta)}$ contains a common lift of $\tau_1$ and $m_{\tau_1}(\sigma^{(\alpha)})-m_{\tau_1}(\sigma^{(\beta)})\in K_{\tau_1\to\sigma}^{\vee}\cap\cu{Q}_{\tau_2}^*$.
        \item [(H2)] For any $\sigma_1,\sigma_2\in\msc{P}_{max}$ contain $\tau_2$,
        $$\sum_{\beta=1}^rh_{\sigma_1^{(\alpha)}\sigma_1^{(\beta)}}(g)g_{\sigma_1^{(\beta)}\sigma_2^{(\gamma)}}(\tau_1)=\sum_{\beta=1}^rg_{\sigma_1^{(\alpha)}\sigma_2^{(\beta)}}(\tau_2)h_{\sigma_2^{(\beta)}\sigma_2^{(\gamma)}}(g),$$
        whenever $m_{\tau_1}(\sigma_1^{(\alpha)})-m_{\tau_1}(\sigma_2^{(\gamma)})\in K_{\tau_1\to\sigma_1\cap\sigma_2}^{\vee}\cap\cu{Q}_{\tau_2}^*$.
        \item [(H3)] For $g_1:\tau_1\to\tau_2,g_2:\tau_2\to\tau_3$ and $g_3:=g_2\circ g_1$, we have
        $$\sum_{\beta=1}^rh_{\sigma^{(\alpha)}\sigma^{(\beta)}}(g_2)h_{\sigma^{(\beta)}\sigma^{(\gamma)}}(g_1)=h_{\sigma^{(\alpha)}\sigma^{(\gamma)}}(g_3),$$
        whenever $\sigma\supset\tau_3$ and $m_{\tau_1}(\sigma^{(\alpha)})-m_{\tau_1}(\sigma^{(\gamma)})\in K_{\tau_1\to\sigma}^{\vee}\cap\cu{Q}_{\tau_3}^*$.
    \end{enumerate}
\end{definition}

\begin{remark}
 Being invertible and the cocycle condition (G3) are independent of the choice of the ordering $\sigma^{(1)},\dots,\sigma^{(r)}$, so Definition \ref{def:comp_data} only depends on $\bb{L}$.
\end{remark}

\begin{remark}
Conditions (G1)-(G3) are generalization of the linear algebra data given in \cite{Kaneyama_classification} to affine manifold with singularities. Given a tropical Lagrangian multi-section with degree $\geq 2$, a Kaneyama data may not exist, even on a single toric piece (see \cite{Suen_trop_lag}, Example 5.1). Therefore, one may ask for the abundance of such data. We will prove in Theorem \ref{thm:many_examples} that, at least in the case of Calabi-Yau hypersurfaces, such data can be obtained from restricting toric vector bundles on the ambient toric variety to its boundary divisor.
\end{remark}

Condition (G2) implies entries of $G_{\sigma_1\sigma_2}(\tau')$ are regular functions. Condition (G1) and the cocycle condition (G3) immediately implies the existence of a rank $\mu(\tau')$ locally free sheaf $\cu{E}({\bf{g}}(\tau'))$ on the closed toric strata $X_{\tau}$. Define
$$\cu{E}({\bf{g}}(\tau)):=\bigoplus_{\tau':\pi(\tau')=\tau}\cu{E}({\bf{g}}(\tau')),$$
which is a rank $r$ locally sheaf on $X_{\tau}$.

\begin{remark}\label{rem:equiv_str}
 The local representative $\varphi_{\tau'}$ of $\varphi$ determines a $\cu{Q}_{\tau}\otimes\bb{C}^{\times}$-action on $\cu{E}({\bf{g}}(\tau'))$. Namely,
 $$\lambda\cdot 1_{\sigma^{(\alpha)}}(\tau'):=\lambda^{m_{\tau}(\sigma^{(\alpha)})}1_{\sigma^{(\alpha)}}(\tau'),$$
 for all $\lambda\in\cu{Q}_{\tau}\otimes\bb{C}^{\times}$. One can easily check that this action is compatible with the transition maps. Hence $\cu{E}({\bf{g}}(\tau'))$ carries a structure of toric vector bundle over $X_{\tau}$. The existence of equivariant structure will be important when we perform the reverse construction in Section \ref{sec:E_to_L}.
\end{remark}
    
We would like to glue $\{\cu{E}({\bf{g}}(\tau))\}_{\tau\in\msc{P}}$ together. The idea is to embed $\cu{E}(\tau_2')$ to $\cu{E}(\tau_1')$ when $\tau_1'\subset\tau_2'$. To do this, we first use the data $\{(h_{\sigma^{(\alpha)}\sigma^{(\beta)}}(g))\}$ to construct an isomorphism $H(g):\cu{E}({\bf{g}}(\tau_2))\to F(g)^*\cu{E}({\bf{g}}(\tau_1))$. Define
$$H_{\sigma}(g):1_{\sigma^{(\alpha)}}(\tau_2)\mapsto\sum_{\beta=1}^rh_{\sigma^{(\alpha)}\sigma^{(\beta)}}(g)z^{m_{\tau_1}(\sigma^{(\alpha)})-m_{\tau_1}(\sigma^{(\beta)})}|_{V_{\tau_2\to\sigma}}F(g)^*1_{\sigma^{(\beta)}}(\tau_1).$$
By Condition (H1), the entries of $H_{\sigma}(g)$ are regular functions on $V_{\tau_2\to\sigma}$. Given maximal $\sigma_1,\sigma_2\supset\tau_2$, the composition $F(g)^*G_{\sigma_1\sigma_2}(\tau_1)\circ H_{\sigma_1}(g)|_{V_{\tau_2\to\tau}}$ is given by
$$1_{\sigma_1^{(\alpha)}}(\tau_1)\mapsto\sum_{\beta,\gamma=1}^rh_{\sigma_1^{(\alpha)}\sigma_1^{(\beta)}}(g)g_{\sigma_1\sigma_2}^{(\beta\gamma)}(\tau_1)z^{m_{\tau_1}(\sigma_1^{(\alpha)})-m_{\tau_1}(\sigma_2^{(\gamma)})}|_{V_{\tau_2\to\tau}}F(g)^*1_{\sigma_2^{(\gamma)}}(\tau_1).$$
On the other hand, the composition $H_{\sigma_2}(g)|_{V_{\tau_2\to\tau}}\circ G_{\sigma_1\sigma_2}(\tau_2)$ is given by
$$1_{\sigma_1^{(\alpha)}}(\tau_2)\mapsto\sum_{\beta,\gamma=1}^rg_{\sigma_1^{(\alpha)}\sigma_2^{(\beta)}}(\tau_2)h_{\sigma_2^{(\beta)}\sigma_2^{(\gamma)}}(g)z^{m_{\tau_2}(\sigma_1^{(\alpha)})-m_{\tau_2}(\sigma_2^{(\beta)})+m_{\tau_1}(\sigma_2^{(\beta)})-m_{\tau_1}(\sigma_2^{(\gamma)})}|_{V_{\tau_2\to\tau}}F(g)^*1_{\sigma_2^{(\gamma)}}(\tau_1).$$
Now, we introduce a frequently used trick, called the \emph{slope cancellation trick}. By definition of $G_{\sigma_1\sigma_2}(\tau_2)$, the constant $g_{\sigma_1^{(\alpha)}\sigma_2^{(\beta)}}(\tau_2)$ is non-zero only if $\sigma_1^{(\alpha)},\sigma_2^{(\beta)}$ contain a common lift of $\tau_2$, say $\tau_2'$, so in particular, they contains $\tau_1'\subset\tau_2'$. On the other hand, by the construction of $H_{\sigma}(g)$, the constant $h_{\sigma_2^{(\beta)}\sigma_2^{(\gamma)}}(g)$ is non-zero only if $\sigma_2^{(\beta)},\sigma_2^{(\gamma)}$ contains a common lift of $\tau_1$ and it must be $\tau_1'$ as $\sigma_2^{(\beta)}\supset\tau_1'$. As a whole, we conclude that $\sigma_1^{(\alpha)},\sigma_2^{(\beta)},\sigma_2^{(\gamma)}$ all contain the lift $\tau_1'$ of $\tau_1$. Moreover, via the inclusion $p_g^*:\cu{Q}_{\tau_2}^*\to\cu{Q}_{\tau_1}^*$, the piecewise linear function
$$f:=p_g^*m_{\tau_2'}(\sigma')-m_{\tau_1'}(\sigma')$$
is independent of $\sigma'$ as long as $\tau_1'\subset\tau_2'\subset\sigma'$, which means $f$ is actually an affine function. This implies
\begin{align*}
    m_{\tau_2'}(\sigma_1^{(\alpha)})-m_{\tau_2'}(\sigma_2^{(\beta)})+m_{\tau_1'}(\sigma_2^{(\beta)})-m_{\tau_1'}(\sigma_2^{(\gamma)})=&\,m_{\tau_2'}(\sigma_1^{(\alpha)})-f-m_{\tau_1'}(\sigma_2^{(\gamma)})\\
    =&\,m_{\tau_1'}(\sigma_1^{(\alpha)})-m_{\tau_1'}(\sigma_2^{(\gamma)}).
\end{align*}
It is worth mentioning that we are not allowed to absorb $f$ by $m_{\tau_1'}(\sigma_2^{(\gamma)})$ as $\sigma_2^{(\gamma)}$ may not contain $\tau_2'$.

By using the slope cancellation trick and Condition (H2), it is easy to see that
$$F(g)^*G_{\sigma_1\sigma_2}(\tau_1)\circ H_{\sigma_1}(g)|_{V_{\tau_2\to\tau}}=H_{\sigma_2}(g)|_{V_{\tau_2\to\tau}}\circ G_{\sigma_1\sigma_2}(\tau_2).$$
Moreover, we have
\begin{align*}
\det(H_{\sigma}(g))=&\,\det(h_{\sigma^{(\alpha)}\sigma^{(\beta)}}(g))z^{\sum_{\alpha=1}^rm_{\tau_1}(\sigma^{(\alpha)})-\sum_{\alpha=1}^rm_{\tau_1}(\sigma^{(\alpha)})}\\
=&\,\det(h_{\sigma^{(\alpha)}\sigma^{(\beta)}}(g))\in\bb{C}^{\times}.
\end{align*}
Hence $H(g):\cu{E}({\bf{g}}(\tau_2))\to F(g)^*\cu{E}({\bf{g}}(\tau_1))$ defines an isomorphism.

Let $\ol{s}$ be the closed gluing data associated to the open gluing data $s$. We now define
$$H_{\ol{s}}(g):\cu{E}({\bf{g}}(\tau_2))\to F_{\ol{s}}(g)^*\cu{E}({\bf{g}}(\tau_1))$$
to be the composition
$$\cu{E}({\bf{g}}(\tau_2))\to\ol{s}_g^*\cu{E}({\bf{g}}(\tau_2))\xrightarrow{\ol{s}_g^*H(g)}F_{\ol{s}}(g)^*\cu{E}({\bf{g}}(\tau_1))$$
where the first isomorphism is prescribed by the chosen equivariant structure on $\cu{E}({\bf{g}}(\tau_2))$, which depends on the choice of local representatives $\{\varphi_{\tau_2'}\}$. Explicitly, it is given by
$$H_{\ol{s}}(g):1_{\sigma^{(\alpha)}}(\tau_2)\mapsto\sum_{\beta=1}^rh_{\sigma^{(\alpha)}\sigma^{(\beta)}}(g)\frac{\ol{s}_g(m_{\tau_1}(\sigma^{(\alpha)})-m_{\tau_1}(\sigma^{(\beta)}))}{\ol{s}_g(m_{\tau_2}(\sigma^{(\alpha)}))}z^{m_{\tau_1}(\sigma^{(\alpha)})-m_{\tau_1}(\sigma^{(\beta)})}|_{V_{\tau_2\to\sigma}}F_{\ol{s}}(g)^*1_{\sigma^{(\beta)}}(\tau_1).$$
To obtain a consistent gluing, we need the following cocycle condition
\begin{equation}\label{eqn:cocycle_H}
    H_{\ol{s}}(g_3)^{-1}\circ F_{\ol{s}}(g_2)^*H_{\ol{s}}(g_1)\circ H_{\ol{s}}(g_2)=Id_{\cu{E}({\bf{g}}(\tau_3))},
\end{equation}
for all $g_1:\tau_1\to\tau_2,g_2:\tau_2\to\tau_3$ and $g_3:=g_2\circ g_1$. We only need to check this for any triple $\tau_1'\subset\tau_2'\subset\tau_3'$. Consider the composition
\begin{equation}\label{eqn:composition_H}
    H_{\ol{s}}(g_3)^{-1}\circ F_{\ol{s}}(g_2)^*H_{\ol{s}}(g_1)\circ H_{\ol{s}}(g_2).
\end{equation}
First note that the monomial part of a summand of (\ref{eqn:composition_H}) has exponent
$$(m_{\tau_2}(\sigma^{(\alpha)})-m_{\tau_2}(\sigma^{(\beta)}))+(m_{\tau_1}(\sigma^{(\beta)})-m_{\tau_1}(\sigma^{(\gamma)}))+(m_{\tau_1}(\sigma^{(\gamma)})-m_{\tau_1}(\sigma^{(\delta)})),$$
with each bracketed term lies in $\cu{Q}_{\tau_3}^*$. The corresponding summand is non-zero only if $\sigma^{(\alpha)},\sigma^{(\beta)},\sigma^{(\gamma)},\sigma^{(\delta)}$ contain a common lift of $\tau_1$. Using the slope cancellation trick, the exponent reduces to
$$m_{\tau_1}(\sigma^{(\alpha)})-m_{\tau_1}(\sigma^{(\delta)})\in\cu{Q}_{\tau_3}^*.$$
Now, the coefficient of the $(\alpha,\delta)$-entry of (\ref{eqn:composition_H}) is given by
$$\sum_{\beta,\gamma=1}^rh_{\sigma^{(\alpha)}\sigma^{(\beta)}}(g_2)h_{\sigma^{(\beta)}\sigma^{(\gamma)}}(g_1)h_{\sigma^{(\gamma)}\sigma^{(\delta)}}^{-1}(g_3)\ol{s}_{\tau_1\tau_2\tau_3}^{(\alpha\beta\gamma\delta)}(\sigma),$$
where $\ol{s}_{\tau_1\tau_2\tau_3}^{(\alpha\beta\gamma\delta)}(\sigma)$ is the product of the following factors
\begin{align*}
    &\ol{s}_{g_2}(m_{\tau_2}(\sigma^{(\alpha)})-m_{\tau_2}(\sigma^{(\beta)}))\ol{s}_{g_2}(m_{\tau_3}(\sigma^{(\alpha)}))^{-1},\\
    &\ol{s}_{g_3}(m_{\tau_1}(\sigma^{(\beta)})-m_{\tau_1}(\sigma^{(\gamma)}))\ol{s}_{g_1}(m_{\tau_2}(\sigma^{(\beta)}))^{-1},\\
    &\ol{s}_{g_3}(m_{\tau_1}(\sigma^{(\gamma)})-m_{\tau_1}(\sigma^{(\delta)}))\ol{s}_{g_3}(m_{\tau_3}(\sigma^{(\delta)})).
\end{align*}
Using the slope cancellation trick again, all the slope difference becomes $m_{\tau_1}(\sigma^{(\eta)})-m_{\tau_1}(\sigma^{(\xi)})$. Then one can easily show that
$$\ol{s}_{\tau_1\tau_2\tau_3}^{(\alpha\beta\gamma\delta)}(\sigma)=\ol{s}_{\tau_1\tau_2\tau_3}(\sigma^{(\alpha)})^{-1},$$
where
$$\ol{s}_{\tau_1\tau_2\tau_3}(\sigma^{(\alpha)}):=\ol{s}_{g_1}(m_{\tau_2}(\sigma^{(\alpha)}))\ol{s}_{g_2}(m_{\tau_3}(\sigma^{(\alpha)}))\ol{s}_{g_3}(m_{\tau_3}(\sigma^{(\alpha)}))^{-1}.$$
Using Condition (H3), the composition (\ref{eqn:composition_H}) can be simplified to
$$1_{\sigma^{(\alpha)}}(\tau_3)\mapsto \ol{s}_{\tau_1\tau_2\tau_3}(\sigma^{(\alpha)})^{-1}1_{\sigma^{(\alpha)}}(\tau_3).$$

\begin{lemma}\label{lem:obs_class}
    The 2-cocycle $\ol{s}_{\tau_1\tau_2\tau_3}(\sigma^{(\alpha)})$ only depends on the lifts $\tau_1',\tau_2',\tau_3'$ so that $\tau_1'\subset\tau_2'\subset\tau_3'\subset\sigma^{(\alpha)}$. It is then closed with respective to the C\v{e}ch differential $\check{\delta}$ on $\check{C}^2(\cu{W}',\bb{C}^{\times})$ and its cohomology class is independent of the local representatives of $\varphi$. 
\end{lemma}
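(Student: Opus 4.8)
The plan is to establish the three assertions in turn: (i) that $\ol{s}_{\tau_1\tau_2\tau_3}(\sigma^{(\alpha)})$ depends only on the lifts $\tau_1'\subset\tau_2'\subset\tau_3'\subset\sigma^{(\alpha)}$, (ii) that it is a $\check{\delta}$-cocycle, and (iii) that its cohomology class is independent of the choice of local representatives $\{\varphi_{\tau'}\}$. For (i), I would start from the explicit formula
$$\ol{s}_{\tau_1\tau_2\tau_3}(\sigma^{(\alpha)})=\ol{s}_{g_1}(m_{\tau_2}(\sigma^{(\alpha)}))\ol{s}_{g_2}(m_{\tau_3}(\sigma^{(\alpha)}))\ol{s}_{g_3}(m_{\tau_3}(\sigma^{(\alpha)}))^{-1}.$$
Each factor is the pairing of the closed gluing datum $\ol{s}_{g_i}\in\cu{Q}_{\bullet}\otimes\bb{C}^{\times}$ against a slope $m_{\tau_j}(\sigma^{(\alpha)})$. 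The key observation is that, by the slope cancellation trick already used in the excerpt, the slopes $m_{\tau}(\sigma^{(\alpha)})$ appearing here differ from the intrinsic slopes $m_{\tau'}(\sigma^{(\alpha)})$ only by affine functions $f=p_g^*m_{\tau_2'}(\sigma')-m_{\tau_1'}(\sigma')$ that are independent of $\sigma'$. Since $\ol{s}_{g_i}$ pairs trivially with the lattice $\Lambda_{\tau}$ quotiented out in $\cu{Q}_{\tau}$, these affine ambiguities are annihilated, so the value depends only on the cells $\tau_j'$ through which $\sigma^{(\alpha)}$ passes, not on $\sigma^{(\alpha)}$ itself beyond the chain $\tau_1'\subset\tau_2'\subset\tau_3'$.

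For (ii), I would verify $\check{\delta}\,\ol{s}=1$ directly. Writing the chain as a composition of four morphisms $g:\tau_0\to\tau_1\to\tau_2\to\tau_3$, the C\v{e}ch coboundary $(\check{\delta}\,\ol{s})_{\tau_0\tau_1\tau_2\tau_3}$ is the alternating product of the four faces $\ol{s}_{\tau_1\tau_2\tau_3}$, $\ol{s}_{\tau_0\tau_2\tau_3}$, $\ol{s}_{\tau_0\tau_1\tau_3}$, $\ol{s}_{\tau_0\tau_1\tau_2}$. Substituting the explicit formula and using the cocycle identity for the open (hence closed) gluing data, namely $\ol{s}_{e_1\circ e_2}=\ol{s}_{e_1}\ol{s}_{e_2}$, together with multiplicativity of the pairing in the slope argument, all factors should telescope. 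The computation mirrors the standard proof that the transition-data obstruction in the Gross-Siebert setup (\cite{GS1}, Theorem 2.34) is a cocycle; I expect every slope $m_{\tau_i}(\sigma^{(\alpha)})$ to appear paired against a canceling product of $\ol{s}_{g_j}$-factors once the cocycle relation among the $g_j$ is invoked.

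For (iii), suppose $\{\varphi_{\tau'}\}$ and $\{\tilde{\varphi}_{\tau'}\}$ are two sets of local representatives of the same $\varphi\in H^0(L,\cu{MPL}_{\msc{P}'})$. Then $\tilde{\varphi}_{\tau'}-\varphi_{\tau'}$ is affine, so the slopes differ by $\tilde{m}_{\tau}(\sigma^{(\alpha)})-m_{\tau}(\sigma^{(\alpha)})=a_{\tau'}\in\cu{Q}_{\tau}^*$, a quantity independent of $\sigma^{(\alpha)}$ once restricted to a fixed lift $\tau'$. Feeding this difference through the formula, the change in $\ol{s}_{\tau_1\tau_2\tau_3}(\sigma^{(\alpha)})$ should factor as a $\check{\delta}$-coboundary of a $\check{C}^1$ cochain built from the pairings $\ol{s}_{g}(a_{\tau'})$, so the cohomology class is unchanged.

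The main obstacle I anticipate is bookkeeping in step (ii): keeping track of which lift $\sigma^{(\alpha)}$ survives in each face of the coboundary, and ensuring the slope cancellation trick applies uniformly across all four terms so that the $\ol{s}_{g_j}$-factors genuinely cancel rather than merely reducing to a manifestly-affine residue. The conceptual content is light once one accepts that pairing a closed gluing datum against a slope is insensitive to affine shifts; the difficulty is purely combinatorial alignment of the indices $\tau_i'$ and the lifts $\sigma^{(\alpha)}$ across the coboundary.
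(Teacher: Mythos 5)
Your three-part plan matches the structure of the paper's proof, and parts (ii) and (iii) are sound in outline: closedness really is a direct telescoping of the four faces of the C\v{e}ch coboundary (indeed they cancel identically, so the gluing-data cocycle identity you invoke there is not even needed), and a change of representatives by affine functions $f_{\tau'}$ multiplies the cocycle by exactly the coboundary of the $1$-cochain $(\tau_1',\tau_2')\mapsto\ol{s}_{g}(f_{\tau_2'})$, which is what the paper computes.

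The gap is in part (i). The mechanism you offer --- that ``$\ol{s}_{g_i}$ pairs trivially with the lattice $\Lambda_\tau$ quotiented out in $\cu{Q}_\tau$, so the affine ambiguities are annihilated'' --- is not what kills the dependence on $\sigma^{(\alpha)}$, and as stated it proves nothing: each individual factor $\ol{s}_{g_i}(m_{\tau_j}(\sigma^{(\alpha)}))$ genuinely varies with $\sigma^{(\alpha)}$. After you use the corrections $m_{\tau_i'\tau_j'}$ (which depend only on the lifts) to rewrite every slope at the level of $\tau_3$, you are still left with the residual product $\ol{s}_{g_1}(m_{\tau_3}(\sigma^{(\alpha)}))\,\ol{s}_{g_2}(m_{\tau_3}(\sigma^{(\alpha)}))\,\ol{s}_{g_3}(m_{\tau_3}(\sigma^{(\alpha)}))^{-1}$, and eliminating it requires precisely the $1$-cocycle relation $\ol{s}_{g_3}=\ol{s}_{g_1}\cdot\ol{s}_{g_2}$ evaluated on $\cu{Q}_{\tau_3}^*$ --- the ingredient you deploy only in part (ii), where it is superfluous. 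Once you insert it here you get the clean identity $\ol{s}_{\tau_1\tau_2\tau_3}(\sigma^{(\alpha)})=\ol{s}_{g_1}(m_{\tau_2'\tau_3'})^{-1}$, which manifestly depends only on the lifts. The paper reaches the same conclusion by a slightly different route: it compares the values on two maximal cells $\sigma_1^{(\alpha)},\sigma_2^{(\beta)}$ containing the common lift $\tau_3'$ whose slope difference lies in $K_{\tau_3\to\sigma_1\cap\sigma_2}^{\vee}\cap\cu{Q}_{\tau_3}^*$, notes that this difference is the same element of $\cu{Q}_{\tau_3}^*\subset\cu{Q}_{\tau_2}^*\subset\cu{Q}_{\tau_1}^*$ at all three levels, applies the cocycle condition of $\ol{s}$ to get ratio $1$, and then handles arbitrary pairs by a connected chain of maximal cells around $\tau_3'$ using continuity of $\varphi_{\tau_3'}$. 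Either route is fine, but your part (i) must be rewritten to invoke the gluing-data cocycle relation (and, if you take the paper's comparison route, to add the connectivity argument for non-adjacent maximal cells).
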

\begin{proof}
     Let $\sigma_1,\sigma_2\in\msc{P}_{max}$ such that $\sigma_1,\sigma_2\supset\tau_3$. We first prove the special case that $\sigma_1^{(\alpha)},\sigma_2^{(\beta)}$ contain the common lift $\tau_3'$ so that
     $$m_{\tau_3}(\sigma_1^{(\alpha)})-m_{\tau_3}(\sigma_2^{(\beta)})\in K_{\tau_3\to\sigma_1\cap\sigma_2}^{\vee}\cap\cu{Q}_{\tau_3}^*.$$
     In this case, via the inclusion $\cu{Q}_{\tau_3}^*\subset\cu{Q}_{\tau_2}^*\subset\cu{Q}_{\tau_1}^*$, we have
     $$m_{\tau_1}(\sigma_1^{(\alpha)})-m_{\tau_1}(\sigma_2^{(\beta)})=m_{\tau_2}(\sigma_1^{(\alpha)})-m_{\tau_2}(\sigma_2^{(\beta)})=m_{\tau_3}(\sigma_1^{(\alpha)})-m_{\tau_3}(\sigma_2^{(\beta)}).$$
     Hence the cocycle condition of $ s$ implies
     $$\ol{s}_{\tau_1\tau_2\tau_3}(\sigma_1^{(\alpha)})=\ol{s}_{\tau_1\tau_2\tau_3}(\sigma_2^{(\beta)}).$$
     For general pair of $\sigma_1^{(\alpha)},\sigma_2^{(\beta)}$ that contains $\tau_3'$, choose a sequence of maximal cells
     $$\sigma_1^{(\alpha)}:=\sigma_{i_1}^{(\alpha_1)},\sigma_{i_2}^{(\alpha_2)},\dots,\sigma_{i_k}^{(\alpha_k)}:=\sigma_2^{(\beta)}.$$
     such that $\sigma_{i_j}^{(\alpha_j)}\supset\tau_3'$ for all $j=1,\dots,k$ and $\pi(\sigma_{i_j}^{(\alpha_j)}\cap\sigma_{i_{j+1}}^{(\alpha_{j+1})})=\sigma_{i_j}\cap\sigma_{i_{j+1}}$ for all $j=1,\dots,k-1$. Then continuity of $\varphi_{\tau_3'}$ implies
     $$m_{\tau_3}(\sigma_{i_j}^{(\alpha_j)})|_{K_{\tau_3\to\sigma_{i_j}\cap\sigma_{i_{j+1}}}}=m_{\tau_3}(\sigma_{i_{j+1}}^{(\alpha_{j+1})})|_{K_{\tau_3\to\sigma_{i_j}\cap\sigma_{i_{j+1}}}}$$
     for all $j=1,\dots,k-1$. In particular,
     $$m_{\tau_3}(\sigma_{i_j}^{(\alpha_j)})-m_{\tau_3}(\sigma_{i_{j+1}}^{(\alpha_{j+1})})\in K_{\tau_3\to\sigma_{i_j}\cap\sigma_{i_{j+1}}}^{\vee}\cap\cu{Q}_{\tau_3}^*,$$
     for all $j=1,\dots,k-1$. By the special case, we have
     $$\ol{s}_{\tau_1\tau_2\tau_3}(\sigma_1^{(\alpha)})=\ol{s}_{\tau_1\tau_2\tau_3}(\sigma_{i_2}^{(\alpha_2)})=\cdots=\ol{s}_{\tau_1\tau_2\tau_3}(\sigma_2^{(\beta)}).$$
     This proves the first part of the lemma. For the second part, it is obvious that $\ol{s}_{\tau_1\tau_2\tau_3}(\sigma^{(\alpha)})$ is $\check{\delta}$-closed. To show that its cohomology class is independent of the local representatives, note that any choice of another local representative of $\varphi$ differ from $\varphi_{\tau_i'}$ by a local affine function $f_{\tau_i'}:W_{\tau_i}\to\bb{R}$. Then
     $$\ol{s}_{\tau_1\tau_2\tau_3}^{old}(\sigma_1^{(\alpha)})=\ol{s}_{g_1}(f_{\tau_2'})\ol{s}_{g_2}(f_{\tau_3'})\ol{s}_{g_3}(f_{\tau_3'})^{-1}\ol{s}_{\tau_1\tau_2\tau_3}^{new}(\sigma_1^{(\alpha)}),$$
     which means $\ol{s}_{\tau_1\tau_2\tau_3}^{old}(\sigma_1^{(\alpha)})$ and $\ol{s}_{\tau_1\tau_2\tau_3}^{new}(\sigma_1^{(\alpha)})$ define the same cohomology class.
\end{proof}
    
Denote the cohomology class obtained in Lemma \ref{lem:obs_class} by $[\ol{s}_{\tau_1'\tau_2'\tau_3'}]\in  H^2(\cu{W}',\bb{C}^{\times})$. We define $o_{\bb{L}}: H^1(\cu{W},\bb{C}^{\times})\to  H^2(\cu{W}',\bb{C}^{\times})$ by
$$o_{\bb{L}}:[\ol{s}]\mapsto[\ol{s}_{\tau_1'\tau_2'\tau_3'}].$$
This map is well-defined because if $\ol{s}_{\tau_1\to\tau_2}=t_{\tau_1}^{-1}t_{\tau_2}|_{\tau_1}$, we have
\begin{align*}
    \ol{s}_{\tau_1\tau_2\tau_3}(\sigma^{(\alpha)})=&\,\frac{t_{\tau_2}(m_{\tau_2}(\sigma^{(\alpha)}))}{t_{\tau_1}(m_{\tau_2}(\sigma^{(\alpha)}))}
    \frac{t_{\tau_3}(m_{\tau_3}(\sigma^{(\alpha)}))}{t_{\tau_2}(m_{\tau_3}(\sigma^{(\alpha)}))}
    \frac{t_{\tau_1}(m_{\tau_3}(\sigma^{(\alpha)}))}{t_{\tau_3}(m_{\tau_3}(\sigma^{(\alpha)}))}
    \\=&\,\frac{t_{\tau_1}(m_{\tau_1}(\sigma^{(\alpha)}))}{t_{\tau_1}(m_{\tau_2}(\sigma^{(\alpha)}))}
    \frac{t_{\tau_2}(m_{\tau_2}(\sigma^{(\alpha)}))}{t_{\tau_2}(m_{\tau_3}(\sigma^{(\alpha)}))}
    \frac{t_{\tau_1}(m_{\tau_3}(\sigma^{(\alpha)}))}{t_{\tau_1}(m_{\tau_1}(\sigma^{(\alpha)}))}
    \\=&\,t_{\tau_1}(m_{\tau_1'\tau_2'})t_{\tau_2}(m_{\tau_2'\tau_3'})t_{\tau_1'}(m_{\tau_1'\tau_3'})^{-1}.
\end{align*}
It is clear that $o_{\bb{L}}$ is a group homomorphism. Since $W_{\tau_0'}\cap\cdots\cap W_{\tau_p'}$ are contractible for all $p\geq 0$, it follows that $\{W_{\tau'}\}_{\tau'\in\msc{P}'}$ is an acyclic cover for $\bb{C}^{\times}$. It was also shown is \cite{GS1} that $\{W_{\tau}\}_{\tau\in\msc{P}}$ is an acyclic cover for $\cu{Q}_{\msc{P}}\otimes\bb{C}^{\times}$. Therefore, we can simply write  $o_{\bb{L}}: H^1(B,\cu{Q}_{\msc{P}}\otimes\bb{C}^{\times})\to H^2(L,\bb{C}^{\times})$.

\begin{theorem}\label{thm:close}
    Suppose $\ol{s}$ is the associated closed gluing data of an open gluing data $s$. The locally free sheaves $\{\cu{E}({\bf{g}}(\tau))\}_{\tau\in\msc{P}}$ can be glued to a rank $r$ locally free sheaf on the scheme $X_0(B,\msc{P},s)$ via the data $({\bf{g}},{\bf{h}})$ if and only if $o_{\bb{L}}([\ol{s}])=1$.
\end{theorem}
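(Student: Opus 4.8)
The plan is to reduce the gluing problem to the single cocycle condition (\ref{eqn:cocycle_H}), which the preceding computation has already unpacked explicitly. Recall that the isomorphisms $H_{\ol{s}}(g):\cu{E}({\bf{g}}(\tau_2))\to F_{\ol{s}}(g)^*\cu{E}({\bf{g}}(\tau_1))$ are exactly the descent data that glue the sheaves $\{\cu{E}({\bf{g}}(\tau))\}_{\tau\in\msc{P}}$ along the inclusions $F_{\ol{s}}(g):X_{\sigma}\hookrightarrow X_{\tau}$ of the toric strata inside $X_0(B,\msc{P},\ol{s})\cong X_0(B,\msc{P},s)$. The strata of $X_0(B,\msc{P},s)$ are indexed by $\msc{P}$, and the gluing maps are generated by the elementary morphisms $g:\tau_1\to\tau_2$ in $\textbf{Cat}(\msc{P})$; a compatible system of isomorphisms satisfying the triple-composition identity (\ref{eqn:cocycle_H}) is precisely what is needed to descend the local sheaves to a single locally free sheaf on the colimit. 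Thus the theorem is equivalent to the assertion that (\ref{eqn:cocycle_H}) holds for every composable pair $g_1:\tau_1\to\tau_2$, $g_2:\tau_2\to\tau_3$ if and only if $o_{\bb{L}}([\ol{s}])=1$.

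\textbf{The forward and backward directions.} The core of the argument is the explicit evaluation of the composition (\ref{eqn:composition_H}) already carried out above: using the slope cancellation trick together with Condition (H3), that composition simplifies to the diagonal action
\begin{equation*}
    1_{\sigma^{(\alpha)}}(\tau_3)\mapsto \ol{s}_{\tau_1\tau_2\tau_3}(\sigma^{(\alpha)})^{-1}1_{\sigma^{(\alpha)}}(\tau_3).
\end{equation*}
Hence (\ref{eqn:cocycle_H}) is satisfied on each chart $V_{\tau_3\to\sigma}$ if and only if $\ol{s}_{\tau_1\tau_2\tau_3}(\sigma^{(\alpha)})=1$ for all relevant $\sigma^{(\alpha)}\supset\tau_3'$. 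By Lemma \ref{lem:obs_class}, the collection $\{\ol{s}_{\tau_1\tau_2\tau_3}(\sigma^{(\alpha)})\}$ depends only on the lifts $\tau_1'\subset\tau_2'\subset\tau_3'$ and is $\check{\delta}$-closed, defining the class $[\ol{s}_{\tau_1'\tau_2'\tau_3'}]=o_{\bb{L}}([\ol{s}])\in H^2(\cu{W}',\bb{C}^{\times})$. For the backward direction, if $o_{\bb{L}}([\ol{s}])=1$, then $\ol{s}_{\tau_1'\tau_2'\tau_3'}=\check{\delta}(\ol{k})$ for some $\check{C}^1$-cochain $\ol{k}=\{\ol{k}_g\}$; I would then absorb $\ol{k}$ into the gluing data by replacing each $H_{\ol{s}}(g)$ with $\ol{k}_g\cdot H_{\ol{s}}(g)$, noting that scaling by a constant preserves Conditions (H1)--(H3) and the equivariance, and that the modified system satisfies (\ref{eqn:cocycle_H}) identically. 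This produces the corrected data ${\bf{h}}_s$ referenced in the introduction. Conversely, if the sheaves do glue via $({\bf{g}},{\bf{h}})$ then the descent cocycle is trivializable, so $[\ol{s}_{\tau_1'\tau_2'\tau_3'}]=1$.

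\textbf{Verifying descent gives a genuine locally free sheaf.} Beyond the cocycle identity, I would confirm that the glued object is a coherent sheaf that is locally free of rank $r$. Each $\cu{E}({\bf{g}}(\tau))$ is locally free of rank $r$ by the paragraph following Definition \ref{def:comp_data}, and each $H_{\ol{s}}(g)$ is an isomorphism by the determinant computation $\det(H_{\sigma}(g))=\det(h_{\sigma^{(\alpha)}\sigma^{(\beta)}}(g))\in\bb{C}^{\times}$. Since $X_0(B,\msc{P},s)$ is a genuine scheme (cells have no self-intersection) obtained as the colimit of the affine charts $V(\sigma)$, descent of locally free sheaves along the open-chart gluing is effective, so a compatible family of isomorphisms satisfying (\ref{eqn:cocycle_H}) yields a unique rank $r$ locally free sheaf restricting to $\cu{E}({\bf{g}}(\tau))$ on each $X_{\tau}$.

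\textbf{Main obstacle.} The technical heart of the argument is not the homological bookkeeping but the repeated use of the slope cancellation trick to guarantee that all monomial exponents appearing in the triple composition collapse correctly and that the nonvanishing coefficients force the four maximal cells $\sigma^{(\alpha)},\dots,\sigma^{(\delta)}$ to share the common lift $\tau_1'$; this is what makes Condition (H3) applicable and what reduces the twisting factor to the single scalar $\ol{s}_{\tau_1\tau_2\tau_3}(\sigma^{(\alpha)})$. Since this computation is precisely what the excerpt performs in the run-up to Lemma \ref{lem:obs_class}, the remaining work in the proof proper is to package these identifications cleanly and to verify that the constant rescaling by $\ol{k}_g$ in the backward direction is compatible with all of (H1)--(H3); I expect the compatibility check for (H3) under rescaling to be the most delicate point, since it must interact correctly with the $\check{\delta}$-coboundary relation.
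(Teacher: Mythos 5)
Your overall strategy matches the paper's: reduce the gluing to the cocycle condition (\ref{eqn:cocycle_H}), use the already-computed fact that the triple composition acts diagonally by $\ol{s}_{\tau_1\tau_2\tau_3}(\sigma^{(\alpha)})^{-1}$, invoke Lemma \ref{lem:obs_class}, and in the backward direction absorb a trivializing $1$-cochain into the gluing maps. However, there is a genuine gap in how you perform that absorption. The obstruction class lives in $H^2(\cu{W}',\bb{C}^{\times})$, i.e.\ in the cohomology of the cover of $L$, so a trivializing cochain is a collection $\{k_{\tau_1'\tau_2'}\}$ indexed by pairs of \emph{lifted} cells $\tau_1'\subset\tau_2'$ in $\msc{P}'$. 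You propose to replace $H_{\ol{s}}(g)$ by $\ol{k}_g\cdot H_{\ol{s}}(g)$ with $\ol{k}_g$ a single constant attached to the morphism $g:\tau_1\to\tau_2$ of $\msc{P}$. That cannot work in general: the quantity you must cancel, $\ol{s}_{\tau_1\tau_2\tau_3}(\sigma^{(\alpha)})$, depends on $\alpha$ through the chain of lifts $\tau_1'\subset\tau_2'\subset\tau_3'\subset\sigma^{(\alpha)}$, whereas an overall scalar rescaling of $H_{\ol{s}}(g)$ contributes the $\alpha$-independent factor $\ol{k}_{g_1}\ol{k}_{g_2}\ol{k}_{g_3}^{-1}$ to the composition. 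The paper's correction $\til{H}_{\ol{s}}(g)$ is instead left multiplication by the diagonal matrix with entries $k_{\tau_1^{(\alpha)}\tau_2^{(\alpha)}}^{-1}$; one then has to check (using (G2)/(H1), which force $k_{\tau_1^{(\alpha)}\tau_2^{(\alpha)}}=k_{\tau_1^{(\beta)}\tau_2^{(\beta)}}$ on the support of the transition matrices) that this diagonal twist still intertwines the $G_{\sigma_1\sigma_2}$'s. Your "scaling by a constant preserves (H1)--(H3)" remark sidesteps exactly this point.

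The second weak spot is the local-freeness step. The sheaves $\cu{E}({\bf{g}}(\tau))$ live on the \emph{closed} toric strata $X_{\tau}$, and the glued object is a colimit over the diagram of closed immersions $F_{\ol{s}}(g)$, not a descent datum for an open cover; effectivity of descent for locally free sheaves along open charts does not apply directly. The paper handles this by exhibiting, over each open set $V_{\ol{s}}(\sigma)$, an explicit frame $\til{1}_{\sigma^{(\alpha)}}(\tau)=\sum_{\beta}k_{\tau^{(\alpha)}\sigma^{(\alpha)}}^{-1}h_{\sigma^{(\alpha)}\sigma^{(\beta)}}(e)z^{m_{\tau}(\sigma^{(\alpha)})-m_{\tau}(\sigma^{(\beta)})}1_{\sigma^{(\beta)}}(\tau)$ and verifying, via (H1), (H3) and the coboundary relation for ${\bf{k}}_s$, that these frames are matched by the maps $\til{H}_{\ol{s}}(g)$; this computation is the actual content of the freeness claim and is not replaced by a general descent principle.
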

\begin{proof}
 If $\{\cu{E}({\bf{g}}(\tau))\}_{\tau\in\msc{P}}$ can be glued, then it is necessary that $s_{\tau_1'\tau_2'\tau_3'}=1$. Hence its cohomology class equals to 1 too. Conversely, suppose $o_{\bb{L}}([ s])=1$. Then there exists a collection ${\bf{k}}_s:=\{k_{\tau_1'\tau_2'}\}_{\tau_1'\subset\tau_2'}$ such that
 $$\ol{s}_{\tau_1'\tau_2'\tau_3'}=k_{\tau_1'\tau_2'}k_{\tau_2'\tau_3'}k_{\tau_1'\tau_3'}^{-1}.$$
 We modify $H_{\ol{s}}(g)$ to a map $\til{H}_{\ol{s}}(g)$, given by
 $$1_{\tau_2'}^{(\alpha)}(\sigma)\mapsto\sum_{\beta=1}^rk_{\tau_1^{(\alpha)}\tau_2^{(\alpha)}}^{-1}\frac{\ol{s}_g(m_{\tau_1'}(\sigma^{(\alpha)})-m_{\tau_1'}(\sigma^{(\beta)}))}{\ol{s}_{g_1}(m_{\tau_2}(\sigma^{(\alpha)}))}h_{\sigma^{(\alpha)}\sigma^{(\beta)}}(g)z^{m_{\tau_1'}(\sigma^{(\alpha)})-m_{\tau_1'}(\sigma^{(\beta)})}F_{\ol{s}}(g)^*1_{\tau_1'}^{(\beta)}(\sigma)$$
 where, up to reordering, $\tau_1^{(\alpha)},\tau_2^{(\alpha)}$ are determined by $\tau_1^{(\alpha)}\subset\tau_2^{(\alpha)}\subset\sigma^{(\alpha)}$. Then it is easy to see that
 \begin{equation}\label{eqn:cocycle_H}
     F_{\ol{s}}(g_2)^*\til{H}_{\ol{s}}(g_1)\circ\til{H}_{\ol{s}}(g_2)=\til{H}_{\ol{s}}(g_3).
 \end{equation}
 We can then define the colimit $\cu{E}_0:=\displaystyle{\lim_{\longrightarrow}}\,\cu{E}({\bf{g}}(\tau))$ with respective to $\{\til{H}_{\tau_1\tau_2}(s)\}$.
     
 It remains to prove locally freeness. To do this, we describe $\cu{E}_0$ on open subsets of $X_0(B,\msc{P}, s)$. For each maximal $\sigma\in\msc{P}_{max}$, let
 $$V_{\ol{s}}(\sigma):=\lim_{\substack{\longrightarrow\\\tau\subset\sigma}}V_{\tau\to\sigma}.$$
 Recall that $V_{\ol{s}}(\sigma)\cong V(\sigma)$, which is an affine scheme. Denote by $i_{\sigma}:V_{\ol{s}}(\sigma)\hookrightarrow X_0(B,\msc{P},\ol{s})$ the inclusion, given by embedding an affine strata $V_{\tau\to\sigma}$ to the closed strata $X_{\tau}\subset X_0(B,\msc{P},s)$. Then
 $$X_0(B,\msc{P},s)=\bigcup_{\sigma\in\msc{P}_{max}}i_{\sigma}(V_{\ol{s}}(\sigma)).$$
 Let $v_1,v_2$ be two vertices of $\sigma$. By definition of the limit, $\xi_1(x_1)\in\cu{E}_{\sigma}(v_1)_{x_1}$ is identified with $\xi_2(x_2)\in\cu{E}_{\sigma}(v_2)_{x_2}$ if and only if there exists $\tau\subset\sigma$, $g_1:v_1\to\tau,g_2:v_2\to\tau$ and $x\in V_{\tau\to\sigma}$ with
 $$F_{\ol{s}}(g_i)(x)=x_i$$
 and there exists $\eta(x)\in\cu{E}_{\sigma}(\tau)_x$ such that
 $$\til{H}_{\ol{s}}(g_i):\eta(x)\mapsto\xi_i(x_i),$$
 for $i=1,2$. This is an equivalence relation due to the cocycle condition of $\{\til{H}_{\ol{s}}(g)\}_g$. For $e:\tau\to\sigma$. Define
 $$\til{1}_{\sigma^{(\alpha)}}(\tau):=\sum_{\beta=1}^rk_{\tau^{(\alpha)}\sigma^{(\alpha)}}^{-1}h_{\sigma^{(\alpha)}\sigma^{(\beta)}}(e)z^{m_{\tau}(\sigma^{(\alpha)})-m_{\tau}(\sigma^{(\beta)})}1_{\sigma^{(\beta)}}(\tau).$$
 By Condition (H1), $\{\til{1}_{\sigma^{(\alpha)}}(\tau)\}_{\alpha=1}^r$ gives a frame for $\cu{E}_{\sigma}(\tau)$.
 We prove that if $g:\{v\}\to\tau$ is a vertex, then
 $$\til{H}_{\ol{s}}(g):\til{1}_{\sigma^{(\alpha)}}(\tau)\mapsto F_{\ol{s}}(g)^*\til{1}_{\sigma^{(\alpha)}}(v).$$
 The coefficient of $\til{H}_{\ol{s}}(g)(\til{1}_{\sigma^{(\alpha)}}(\tau))$ attached to the base vector $F_{\ol{s}}(g)^*1_{\sigma^{(\gamma)}}(v)$ equals to
 $$\sum_{\beta=1}^rk_{\tau^{(\alpha)}\sigma^{(\alpha)}}^{-1}k_{v^{(\beta)}\tau^{(\beta)}}^{-1}h_{\sigma^{(\alpha)}\sigma^{(\beta)}}(e)h_{\sigma^{(\beta)}\sigma^{(\gamma)}}(g)\frac{\ol{s}_{\{v\}\to\tau}(m_v(\sigma^{(\beta)})-m_v(\sigma^{(\gamma)}))}{\ol{s}_{\{v\}\to\tau}(m_{\tau}(\sigma^{(\beta)}))}z^{m_{\tau}(\sigma^{(\alpha)})-m_{\tau}(\sigma^{(\beta)})+m_v(\sigma^{(\beta)})-m_v(\sigma^{(\gamma)})}$$
 By (H1), $\sigma^{(\alpha)},\sigma^{(\beta)}$ contain a common lift of $\tau$, we must have $\tau^{(\beta)}=\tau^{(\alpha)}$ and so $v^{(\beta)}=v^{(\alpha)}$. In particular,
 $$m_{\tau}(\sigma^{(\alpha)})-m_{\tau}(\sigma^{(\beta)})+m_v(\sigma^{(\beta)})-m_v(\sigma^{(\gamma)})=m_v(\sigma^{(\alpha)})-m_v(\sigma^{(\gamma)}).$$
 Using the formula $\ol{s}_g(m_{\tau}(\sigma^{(\alpha)}))=k_{v^{(\alpha)}\tau^{(\alpha)}}k_{\tau^{(\alpha)}\sigma^{(\alpha)}}k_{v^{(\alpha)}\sigma^{(\alpha)}}^{-1}$ and (H3), the sum becomes
 $$\sum_{\beta=1}^rk_{v^{(\alpha)}\sigma^{(\alpha)}}^{-1}h_{\sigma^{(\alpha)}\sigma^{(\gamma)}}(e\circ g)\ol{s}_{\{v\}\to\tau}(m_v(\sigma^{(\alpha)})-m_v(\sigma^{(\gamma)}))z^{m_v(\sigma^{(\alpha)})-m_v(\sigma^{(\gamma)})},$$
 which is the coefficient attached to $F_{\ol{s}}(g)^*1_{\sigma^{(\gamma)}}(v)$ in $F_{\ol{s}}(g)^*\til{1}_{\sigma^{(\alpha)}}(v)$. Hence $\{\til{1}_{\sigma^{(\alpha)}}(v)\}_{v\in\sigma}$ glue to a frame and gives a trivialization $\psi_{\sigma}:\cu{E}_0|_{V_{\ol{s}}(\sigma)}\to\cu{O}_{V_{\ol{s}}(\sigma)}^{\oplus r}$.
\end{proof}

\begin{remark}\label{rmk:trivialization}
The proof of Theorem \ref{thm:close} shows that for each $\sigma\in\msc{P}_{max}$, the trivialization
$$\psi_{\sigma}:i_{\sigma}^*\cu{E}_0\xrightarrow{\sim}\bigoplus_{\sigma'\in\msc{P}'(\sigma)}\cu{O}_{V_{\ol{s}}(\sigma)}^{\oplus\mu(\sigma')},$$
is explicitly given by mapping $\{\til{1}_{\sigma^{(\alpha)}}(v)\}_{v\in\tau}$ to $1_{\sigma^{(\alpha)}}$. Let $\sigma_1,\sigma_2\in\msc{P}_{max}$ and $\tau=\sigma_1\cap\sigma_2$. With respective to this frame, the transition map $\psi_{\sigma_2}\circ\psi_{\sigma_1}^{-1}$, in terms of coordinates of the open subset $V(\tau)\subset V(\sigma_1)$, is given by
$$\psi_{\sigma_2}\circ\psi_{\sigma_1}^{-1}:1_{\sigma_1^{(\alpha)}}\mapsto\sum_{\beta=1}^r\til{g}_{\sigma_1^{(\alpha)}\sigma_2^{(\beta)}}(s)z^{m_v(\sigma_1^{(\alpha)})-m_v(\sigma_2^{(\beta)})}1_{\sigma_2^{(\beta)}},$$
for some $\til{g}_{\sigma_1^{(\alpha)}\sigma_2^{(\beta)}}(s)\in\bb{C}$, depending on the gluing data $s$. We emphasis that in the sum, we have $$z^{m_v(\sigma_1^{(\alpha)})-m_v(\sigma_2^{(\beta)})}|_{V_{\{v\}\to\tau}}=0$$
if $m_v(\sigma_1^{(\alpha)})-m_v(\sigma_2^{(\beta)})\notin K_{\{v\}\to\tau}^{\vee}\cap\Lambda_{\sigma_1}^*$. 
\end{remark}

We combine the data ${\bf{h}}$ in Definition \ref{def:comp_data} and the data ${\bf{k}}_s$ obtained in Theorem \ref{thm:close}, and simply write ${\bf{h}}_s$ as this is the only data needed for the cocycle condition (\ref{eqn:cocycle_H}) to be satisfied. We denote the locally free sheaf obtained in Theorem \ref{thm:close} by $\cu{E}_0(\{\varphi_{\tau'}\},{\bf{D}}_s)$ for instance, where ${\bf{D}}_s$ is the data $({\bf{g}},{\bf{h}}_s)$. One would of course ask for the dependence of $\cu{E}_0(\{\varphi_{\tau'}\},{\bf{D}}_s)$ on the local representatives $\{\varphi_{\tau'}\}$ and ${\bf{D}}_s$. It is not hard to see that if $\{\varphi_{\tau'}'\}$ is an other choice of representative of $\varphi$, there exists another data ${\bf{D}}_s'=({\bf{g}},{\bf{h}},{\bf{k}}_s')$ such that $\cu{E}_0(\{\varphi_{\tau'}\},{\bf{D}}_s)=\cu{E}_0(\{\varphi_{\tau'}'\},{\bf{D}}_s')$. To prove this, first note that for each $\tau\in\msc{P}$ and each lift $\tau'$ of it, $G_{\sigma_1\sigma_2}(\tau')$ is independent of the choice of local representative of $\varphi$. It remains to consider the gluing maps $\{\til{H}_{\ol{s}}(g)\}$. For each $\tau'\in\msc{P}$, $f_{\tau'}:=\varphi_{\tau'}'-\varphi_{\tau'}$ is an affine function defined on $W_{\tau'}$. Recall that we have
$$\ol{s}_{\tau_1'\tau_2'\tau_3'}=k_{\tau_1'\tau_2'}k_{\tau_2'\tau_3'}k_{\tau_1'\tau_3'}^{-1}.$$
If we define
$$k_{\tau_1'\tau_2'}'=k_{\tau_1'\tau_2'}\ol{s}_g(f_{\tau_2'})\in\bb{C}^{\times},$$
then
$$\ol{s}_{g_1}(m_{\tau_2}'(\sigma^{(\alpha)}))\ol{s}_{g_2}(m_{\tau_3}'(\sigma^{(\alpha)}))\ol{s}_{g_3}(m_{\tau_3}'(\sigma^{(\alpha)}))^{-1}=k_{\tau_1'\tau_2'}'k_{\tau_2'\tau_3'}'k_{\tau_1'\tau_3'}'^{-1}.$$
Thus if we modify $H_{\ol{s}}(g)$ by ${\bf{k}}_s':=\{k_{\tau_1'\tau_2'}'\}$ as in the proof of Theorem \ref{thm:close}, we have
$$\til{H}_{\ol{s}}'(g)=\til{H}_{\ol{s}}(g).$$
Thus we have $\cu{E}_0(\{\varphi_{\tau'}\},{\bf{D}}_s)=\cu{E}_0(\{\varphi_{\tau'}'\},{\bf{D}}_s')$.
    
\begin{definition}\label{def:unobstructed}
    Let $\bb{L}$ be a tropical Lagrangian multi-section define over $(B,\msc{P})$ and $s$ be an open gluing data for the fan picture and $\ol{s}$ be its associated closed gluing data. Suppose $o_{\bb{L}}([s])=1$ and denote the data $({\bf{g}},{\bf{h}}_s)$ by ${\bf{D}}_s$, where ${\bf{g}}$ as in Definition \ref{def:comp_data} and ${\bf{h}}_s=({\bf{h}},{\bf{k}}_s)$ as in Theorem \ref{thm:close}. We denote the locally free sheaf obtained in Theorem \ref{thm:close} by $\cu{E}_0(\bb{L},{\bf{D}}_s)$. The set of all ${\bf{D}}_s$ is denoted by $\msc{D}_s(\bb{L})$. We say $\bb{L}$ is \emph{unobstructed} if $\msc{D}_s(\bb{L})\neq\emptyset$ and a pair $(\bb{L},{\bf{D}}_s)$ is a called a \emph{tropical Lagrangian brane}.
\end{definition}

\begin{remark}
 One can enrich $\bb{L}$ by a $\bb{C}^{\times}$-local system $\cu{L}$ on the domain $L$. Regarding it as a constructible sheaf on $L$, we obtain a set of specialization maps $\{f_{\tau_1'\tau_2'}\}_{\tau_1'\subset\tau_2'}\subset\bb{C}^{\times}$ that represent $\cu{L}$. Given ${\bf{D}}=({\bf{g}},{\bf{h}})\in\msc{D}(\bb{L})$, one can twist the gluing data ${\bf{h}}$ by setting
 $$h_{\sigma^{(\alpha)}\sigma^{(\beta)}}^{\cu{L}}(g):=h_{\sigma^{(\alpha)}\sigma^{(\beta)}}(g)f_{\tau_1'\tau_2'},$$
 where $g:\tau_1\to\tau_2$ and $\tau_1',\tau_2'\in\msc{P}'$ are lifts of $\tau_1,\tau_2$ that are uniquely determined by requiring  $\tau_1'\subset\tau_2'\subset\sigma^{(\alpha)}\cap\sigma^{(\beta)}$. By the definition that $h_{\sigma^{(\alpha)}\sigma^{(\beta)}}(g)\neq 0$ only if $\sigma^{(\alpha)},\sigma^{(\beta)}$ contain a common lift of $\tau_1$, it is easy to see that ${\bf{D}}^{\cu{L}}:=({\bf{g}},{\bf{h}}^{\cu{L}})\in\msc{D}(\bb{L})$.
\end{remark}

\section{Tropical locally free sheaves and their associated tropical Lagrangian multi-section}\label{sec:E_to_L}

As we have seen in the construction of $\cu{E}_0(\bb{L},{\bf{D}}_s)$, the restriction of $\cu{E}_0(\bb{L},{\bf{D}}_s)$ to a strata $X_{\tau}\subset X_0(B,\msc{P},s)$ is actually a toric vector bundle whose equivariant structure is determined by the fan structure $S_{\tau'}:W_{\tau'}\to L_{\tau'}$ and the choice of local representative of $\varphi|_{W_{\tau'}}$. See Remark \ref{rem:equiv_str} for the description of the equivariant structure. Recall that a strata $X_{\tau}$ of a toric variety $X_{\Sigma}$ is actually a closed orbit in $X_{\Sigma}$ with respective to the big torus action. Hence if $\cu{E}$ is a toric vector bundle on $X_{\Sigma}$, its restriction $\cu{E}|_{X_{\tau}}$ admits an induced big torus action and the inclusion map $\cu{E}|_{X_{\tau}}\to\cu{E}$ is equivariant with respective to the big torus action. It guides us to look at the following type of locally free sheaf over $X_0(B,\msc{P},s)$.

\begin{definition}\label{def:tropical}
    Let $\cu{E}_0$ be a locally free sheaf on $X_0(B,\msc{P},s)$ and for $\tau\in\msc{P}$, put $\cu{E}(\tau):=q_{\tau}^*\cu{E}_0$. A \emph{tropical structure} on $\cu{E}_0$ is a choice of toric vector bundle structure on $\cu{E}(\tau):=q_{\tau}^*\cu{E}_0$ such that for any $g:\tau_1\to\tau_2$ and toric indecomposable summand $\cu{E}^{(\alpha)}(\tau_2)$ of $\cu{E}(\tau_2)$, there exists character $\chi_g$ on $X_{\tau_1}$ such that the embedding $\cu{E}^{(\alpha)}(\tau_2)\hookrightarrow F_{\ol{s}}(g)^*(\cu{E}(\tau_1)\otimes(\chi_g))$ is $\cu{Q}_{\tau_1}\otimes\bb{C}^{\times}$-equivariant. A locally free sheaf on $X_0(B,\msc{P},s)$ that admits a tropical structure is called a \emph{tropical locally free sheaf}.
\end{definition}

Definition \ref{def:tropical} makes sense because of the following results.

\begin{theorem}[=Theorem 1.2.3 + Corollary 1.2.4 in \cite{Kly}]\label{thm:kyl_shift}
    Let $E,F$ be two toric vector bundles over a complete toric variety $X_{\Sigma}$. Then the following statement are true.
    \begin{enumerate}
        \item $E$ is indecomposable torically if and only if it is indecomposable as a ordinary vector bundle.
        \item If $E$ is a indecomposable summand of $F$, then there exists a character $\chi$ such that $E\otimes(\chi)$ is a toric summand of $F$.
    \end{enumerate}
    In particular, two indecomposable toric vector bundles $E,F$ on a toric variety are isomorphic as ordinary vector bundles if and only if $E\cong F\otimes(\chi)$ as toric vector bundles, for some character $\chi$.
\end{theorem}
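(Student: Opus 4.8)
The plan is to work entirely inside the finite-dimensional $\mathbb{C}$-algebra $A := \mathrm{End}_{\cu{O}}(E)$ (and its analogues for $F$, $E\oplus F$ and $\mathcal{H}om(F,E)$); this is Klyachko's result, so one could simply cite it, but I would rather reprove it from the endomorphism algebra. Since $X_{\Sigma}$ is complete, $A = H^0(X_\Sigma,\mathcal{E}nd(E))$ is finite-dimensional, and the $T$-action on the toric bundle $E$ makes $A$ a rational $T$-module and $T$-algebra, whose invariant subalgebra $A^T = \mathrm{End}_T(E)$ consists of the equivariant endomorphisms. The dictionary I use throughout is that ordinary direct-sum decompositions of $E$ correspond to idempotents of $A$ and toric decompositions to idempotents of $A^T$; in particular $E$ is ordinarily (resp. torically) indecomposable iff $A$ (resp. $A^T$) has no nontrivial idempotent, i.e. is local. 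Two standard facts drive everything: $T$ is linearly reductive, so $(-)^T$ is exact and $\mathrm{rad}(A)$ (being characteristic, hence $T$-stable) satisfies $A^T/(A^T\cap\mathrm{rad}\,A)\cong(A/\mathrm{rad}\,A)^T$; and over $\mathbb{C}$ a finite-dimensional algebra is local iff its semisimple quotient is $\mathbb{C}$.

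For part (1), the direction ``ordinarily indecomposable $\Rightarrow$ torically indecomposable'' is formal, since a toric splitting is in particular an ordinary one. For the converse I would analyze $(A/\mathrm{rad}\,A)^T$. Write $A/\mathrm{rad}\,A\cong\prod_i M_{n_i}(\mathbb{C})$; as $T$ is connected it fixes the set of blocks and acts on each $M_{n_i}(\mathbb{C})$ by inner automorphisms (using $\mathrm{Aut}(M_{n_i})=PGL_{n_i}$), so the invariants are $\prod_i\prod_\chi M_{m_{i\chi}}(\mathbb{C})$, where $m_{i\chi}$ records the weight multiplicities of the conjugating representation. This is isomorphic to $\mathbb{C}$ exactly when there is a single block with $n_i=1$, i.e. when $A/\mathrm{rad}\,A=\mathbb{C}$. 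Hence $A^T$ local forces $A$ local, which is the content direction.

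The heart of the argument is the ``in particular'' clause, which I would establish before part (2). Let $E,F$ be torically indecomposable and fix an ordinary isomorphism $\phi\in\mathrm{Hom}(F,E)$ with inverse $\psi\in\mathrm{Hom}(E,F)$; both groups are finite-dimensional rational $T$-modules, so I decompose $\phi=\sum_\chi\phi_\chi$ and $\psi=\sum_\mu\psi_\mu$ into weight components. Taking the weight-$0$ part of $\phi\psi=\mathrm{id}_E$ gives $\sum_\chi\phi_\chi\psi_{-\chi}=\mathrm{id}_E$ in $A^T=\mathrm{End}_T(E)$, which is local by part (1); projecting to $A^T/\mathrm{rad}=\mathbb{C}$, the right-hand side maps to $1$, so some summand $\phi_{\chi_0}\psi_{-\chi_0}$ is a unit, hence an equivariant automorphism of $E$. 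Then $\phi_{\chi_0}\colon F\to E$ is a split surjection between bundles of equal rank, hence an isomorphism, and being homogeneous of weight $\chi_0$ it is precisely a toric isomorphism $F\otimes(\chi)\xrightarrow{\sim}E$. The converse is immediate because twisting by a character leaves the underlying bundle unchanged. Part (2) then follows by decomposing $F$ torically into indecomposables (possible by part (1) and finiteness of rank), matching the ordinary summand $E$ to one of them by ordinary Krull--Schmidt, and applying the ``in particular'' clause to the matched pair.

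I expect the main obstacle to be this middle step: extracting an \emph{invertible} homogeneous component from a non-homogeneous isomorphism. The naive hope, that a $T$-stable open locus of isomorphisms inside $\mathrm{Hom}(F,E)$ must contain a weight vector, is false for general $T$-representations, so the argument genuinely needs the localness of the equivariant endomorphism algebra $\mathrm{End}_T(E)$ (hence part (1)) to force one weight component of $\phi\psi$ to be a unit. The only other point requiring care is the sheaf-theoretic lemma that a surjection between locally free sheaves of equal rank on an integral scheme is an isomorphism, which is what upgrades the split surjection $\phi_{\chi_0}$ to an honest isomorphism.
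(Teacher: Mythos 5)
The paper does not actually prove this statement: it is imported verbatim from Klyachko (Theorem 1.2.3 and Corollary 1.2.4 of the cited paper), so there is no in-paper argument to compare against, and your proposal is a correct, self-contained reproof running along essentially the same lines as Klyachko's original treatment. Everything is reduced to the finite-dimensional algebra $A=\mathrm{End}(E)$ with its rational $T$-action; localness of $A^{T}$ is transferred to $A$ through the $T$-stable radical and the block decomposition of $A/\mathrm{rad}\,A$; and the character shift is extracted by isolating an invertible weight component of an ordinary isomorphism, using localness of $\mathrm{End}_{T}(E)$. The steps you leave implicit are all standard and unproblematic: exactness of $(-)^{T}$ for the linearly reductive torus; the lift of the conjugation action $T\to PGL_{n_i}$ to $GL_{n_i}$ (automatic for a torus, since $\mathrm{Ext}^{1}(T,\mathbb{G}_m)=0$, and needed to justify your formula for the invariants of a matrix block); idempotent-completeness of the category of locally free sheaves, so that idempotents of $A$ and of $A^{T}$ really do correspond to ordinary and toric splittings; and Atiyah's Krull--Schmidt theorem on a complete variety, which part (2) needs in order to match $E$ with one of the toric indecomposable summands of $F$. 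Your closing diagnosis is also accurate: the only genuinely nontrivial point is that one cannot find an equivariant vector inside the open locus of isomorphisms directly, and it is precisely the localness of $\mathrm{End}_{T}(E)$ established in part (1) that forces one weight component of $\phi\circ\psi$ to be a unit. I see no gap.
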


\begin{proposition}[=Proposition 1.2.6 in \cite{Kly}]\label{prop:kly_summand_unique}
Indecomposable summands of a toric vector bundle over a complete toric variety is unique up to reordering.
\end{proposition}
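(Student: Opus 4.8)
The plan is to recognize this as an instance of the Krull--Schmidt theorem, carried out directly in the additive category of toric (equivariant) vector bundles on $X_{\Sigma}$, so that the only substantive work is verifying the hypothesis that every toric-indecomposable summand has a local endomorphism ring. Existence of a decomposition into toric-indecomposables is automatic: a toric vector bundle has finite rank, and any nontrivial equivariant splitting strictly decreases the rank of a summand, so the process terminates. Thus the whole content of the proposition is the uniqueness clause.

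For uniqueness I would invoke the Krull--Schmidt--Azumaya theorem, whose hypothesis is precisely that each indecomposable summand has a local endomorphism ring. Hence the key step is to show that for a toric-indecomposable bundle $E$ on the \emph{complete} toric variety $X_{\Sigma}$, the ring $\mathrm{End}_T(E)$ of equivariant endomorphisms is local. Finite-dimensionality over $\bb{C}$ is where completeness enters: $\mathrm{End}_T(E)$ is a $\bb{C}$-subalgebra of $\mathrm{End}(E)=H^0(X_{\Sigma},\cu{E}nd(E))$, and $H^0$ of a coherent sheaf on a complete variety is finite-dimensional. Locality then follows from the standard fact that a finite-dimensional $\bb{C}$-algebra whose only idempotents are $0$ and $1$ is local (its radical is nilpotent, idempotents lift modulo the radical, and the semisimple quotient must therefore be the division algebra $\bb{C}$). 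That $E$ is toric-indecomposable says exactly that $\mathrm{End}_T(E)$ has no nontrivial idempotent, since an equivariant idempotent $e$ would split $E=eE\oplus(1-e)E$ as toric vector bundles. With local endomorphism rings in hand, Krull--Schmidt--Azumaya yields that any two decompositions $E=\bigoplus_i E_i=\bigoplus_j F_j$ into toric-indecomposables have the same length and that there is a permutation $\sigma$ with $E_i\cong F_{\sigma(i)}$ as toric vector bundles, which is the assertion.

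I expect the endomorphism-ring computation to be the main obstacle, and in particular the care needed to stay inside the equivariant category throughout. It is tempting to route the argument through the non-equivariant side: classical Krull--Schmidt for bundles on a complete variety pins down the multiset of \emph{ordinary} isomorphism classes of summands, and part (1) of Theorem \ref{thm:kyl_shift} identifies toric-indecomposables with ordinary indecomposables. The difficulty with that route is that part (2) of Theorem \ref{thm:kyl_shift} only recovers a toric summand up to a character twist, so ordinary uniqueness by itself does not determine the toric isomorphism classes. This is exactly why I prefer to run Krull--Schmidt directly with $\mathrm{End}_T$, where the finiteness and locality checks are the genuine work but the conclusion is the sharp statement about toric summands.
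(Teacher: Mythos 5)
Your argument is correct, but there is nothing in the paper to compare it against: Proposition \ref{prop:kly_summand_unique} is imported verbatim from Klyachko (Proposition 1.2.6 in \cite{Kly}) and the paper supplies no proof of its own. On its own merits, your route is the standard equivariant Krull--Schmidt--Azumaya argument, and every step checks out: $\mathrm{End}_T(E)$ is a $\bb{C}$-subalgebra of $H^0(X_{\Sigma},\cu{E}nd(E))$, hence finite-dimensional by completeness; a finite-dimensional $\bb{C}$-algebra with no nontrivial idempotents is local; and toric-indecomposability is exactly the absence of nontrivial equivariant idempotents, since the image of an equivariant idempotent endomorphism of a locally free sheaf is again locally free and inherits the equivariant structure --- this last point is also what makes the category of toric vector bundles idempotent-complete, which is the (correctly but only implicitly handled) hypothesis needed to run Azumaya's exchange argument in an additive category. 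Your closing remark is also well taken: the non-equivariant detour through Theorem \ref{thm:kyl_shift} would only pin down summands up to character twist, so working directly with $\mathrm{End}_T$ is the right call and delivers the sharp statement the paper actually uses in Corollary \ref{cor:shift}.
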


As a whole, we obtain a classification of toric vector bundle structures on a vector bundle.

\begin{corollary}\label{cor:shift}
Let $E$ be a toric vector over a complete toric variety $X_{\Sigma}$. Then indecomposable summands of $E$ are toric vector bundle. Suppose $F$ is a toric vector bundle such that $F\cong E$ as ordinary vector bundles. Then by shifting indecomposable summands of $F$, we have $F\cong E$ as toric vector bundles.
\end{corollary}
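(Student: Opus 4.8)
The plan is to reduce everything to the indecomposable case, where Theorem \ref{thm:kyl_shift} applies directly, and then reassemble using Krull--Schmidt uniqueness of decompositions into indecomposables. First I would decompose $E$ into toric-indecomposable toric summands, $E\cong\bigoplus_{i=1}^n E_i$ as toric vector bundles; such a decomposition exists because the rank is finite, so the process of splitting off proper toric summands terminates. By part (1) of Theorem \ref{thm:kyl_shift}, each $E_i$ is indecomposable as an ordinary vector bundle, so this is simultaneously a decomposition of $E$ into ordinary indecomposables. Since $X_{\Sigma}$ is complete, the endomorphism algebras of coherent sheaves are finite dimensional over $\bb{C}$, hence the Krull--Schmidt theorem holds and the ordinary indecomposable summands of $E$ are unique up to isomorphism and reordering. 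Any ordinary indecomposable summand of $E$ is therefore isomorphic to some $E_i$ and so inherits a toric structure, which proves the first assertion of the corollary.

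Next, for the shifting statement, I would fix a toric-indecomposable decomposition $F\cong\bigoplus_{j=1}^m F_j$; by Proposition \ref{prop:kly_summand_unique} these summands are well-defined up to reordering, so ``shifting indecomposable summands of $F$'' is unambiguous. Part (1) of Theorem \ref{thm:kyl_shift} again makes each $F_j$ ordinary-indecomposable. The hypothesis that $F\cong E$ as ordinary vector bundles, combined with Krull--Schmidt uniqueness, forces $m=n$ and, after a permutation, $F_j\cong E_j$ as ordinary vector bundles for every $j$. For each such $j$, both $F_j$ and $E_j$ are indecomposable toric vector bundles that are ordinarily isomorphic, so the ``in particular'' clause of Theorem \ref{thm:kyl_shift} supplies a character $\chi_j$ with $F_j\otimes(\chi_j)\cong E_j$ as toric vector bundles.

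Taking the direct sum over $j$ then gives $\bigoplus_j F_j\otimes(\chi_j)\cong\bigoplus_j E_j=E$ as toric vector bundles, which is precisely the claim that shifting the indecomposable summands of $F$ by the characters $\chi_j$ produces $E$ torically. The one genuinely substantive input is the Krull--Schmidt property for vector bundles on the complete toric variety $X_{\Sigma}$: this is where completeness is essential, since it guarantees finite-dimensional endomorphism rings and hence the local endomorphism rings of indecomposables needed for uniqueness of the decomposition. Everything else is bookkeeping. I expect the only point requiring care is to keep the ordinary matching (coming from Krull--Schmidt) consistent with the toric uniqueness (coming from Proposition \ref{prop:kly_summand_unique}), so that each character $\chi_j$ is attached to the correct, well-defined toric summand of $F$.
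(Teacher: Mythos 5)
Your proposal is correct and follows essentially the same route as the paper's own proof: both rest on Krull--Schmidt uniqueness of ordinary indecomposable summands over the complete variety $X_{\Sigma}$, use part (1) of Theorem \ref{thm:kyl_shift} to identify these with the toric indecomposable summands, and then invoke the character-shift clause to match the summands of $F$ with those of $E$ torically. You simply spell out the bookkeeping (existence of the toric decomposition, the matching of summands, and the final direct sum) that the paper leaves implicit.
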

\begin{proof}
    Indecomposable summands of a vector bundle over complete reduced scheme are unique. Hence they must be the toric indecomposable summands of $E$ by (1) in Theorem \ref{thm:kyl_shift}. If $F\cong E$ as ordinary vector bundles, then their indecomposable summands are isomorphic. By (2) in Theorem \ref{thm:kyl_shift}, they are torically isomorphic up to shift of characters. 
\end{proof}

The construction in Section \ref{sec:constructing_E_0} indeed gave us tropical locally sheaves.

\begin{proposition}\label{prop:tropical}
Let $(\bb{L},{\bf{D}}_s)$ be a tropical Lagrangian brane. Then $\cu{E}_0(\bb{L},{\bf{D}}_s)$ is tropical.
\end{proposition}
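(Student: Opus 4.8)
The plan is to exhibit an explicit tropical structure on $\cu{E}_0:=\cu{E}_0(\bb{L},{\bf{D}}_s)$, namely the one coming directly from the construction in Section \ref{sec:constructing_E_0}. First I would recall from Remark \ref{rem:equiv_str} that for each $\tau\in\msc{P}$ and each lift $\tau'$, the local representative $\varphi_{\tau'}$ prescribes a $\cu{Q}_{\tau}\otimes\bb{C}^{\times}$-action on $\cu{E}({\bf{g}}(\tau'))$ via $\lambda\cdot 1_{\sigma^{(\alpha)}}(\tau')=\lambda^{m_{\tau}(\sigma^{(\alpha)})}1_{\sigma^{(\alpha)}}(\tau')$, making each $\cu{E}({\bf{g}}(\tau'))$ a toric vector bundle on $X_{\tau}$, and hence $\cu{E}({\bf{g}}(\tau))=\bigoplus_{\pi(\tau')=\tau}\cu{E}({\bf{g}}(\tau'))$ toric as well. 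The first step is therefore to identify $q_{\tau}^*\cu{E}_0$ with $\cu{E}({\bf{g}}(\tau))$, so that this equivariant structure defines a candidate tropical structure. This identification should follow from the description of $\cu{E}_0$ in Theorem \ref{thm:close} as the colimit of the $\cu{E}({\bf{g}}(\tau))$ along the maps $\{\til{H}_{\ol{s}}(g)\}$: restricting to the closed stratum $X_{\tau}$ (via the structure map $q_{\tau}$) recovers the $\tau$-piece $\cu{E}({\bf{g}}(\tau))$ of the colimit.

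The heart of the argument is to verify the defining equivariance condition of Definition \ref{def:tropical}: for each $g:\tau_1\to\tau_2$ and each toric indecomposable summand $\cu{E}^{(\alpha)}(\tau_2)$, the embedding into $F_{\ol{s}}(g)^*(\cu{E}(\tau_1)\otimes(\chi_g))$ is $\cu{Q}_{\tau_1}\otimes\bb{C}^{\times}$-equivariant for a suitable character $\chi_g$. Here the embedding is realized by the isomorphism $H_{\ol{s}}(g):\cu{E}({\bf{g}}(\tau_2))\to F_{\ol{s}}(g)^*\cu{E}({\bf{g}}(\tau_1))$ (or its modification $\til{H}_{\ol{s}}(g)$) already constructed in Section \ref{sec:constructing_E_0}. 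I would compute the weight of $H_{\ol{s}}(g)\bigl(1_{\sigma^{(\alpha)}}(\tau_2)\bigr)$ under the $\cu{Q}_{\tau_1}\otimes\bb{C}^{\times}$-action. Each summand carries the monomial $z^{m_{\tau_1}(\sigma^{(\alpha)})-m_{\tau_1}(\sigma^{(\beta)})}$ times $F_{\ol{s}}(g)^*1_{\sigma^{(\beta)}}(\tau_1)$, whose $\cu{Q}_{\tau_1}$-weight is $(m_{\tau_1}(\sigma^{(\alpha)})-m_{\tau_1}(\sigma^{(\beta)}))+m_{\tau_1}(\sigma^{(\beta)})=m_{\tau_1}(\sigma^{(\alpha)})$; this is manifestly independent of $\beta$, so the whole image vector has pure weight $m_{\tau_1}(\sigma^{(\alpha)})$. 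Comparing with the $\cu{Q}_{\tau_2}$-weight $m_{\tau_2}(\sigma^{(\alpha)})$ of the source vector, the discrepancy is governed by the affine function $f=p_g^*m_{\tau_2'}(\sigma')-m_{\tau_1'}(\sigma')$ appearing in the slope cancellation trick; its linear part, regarded as an element of $\cu{Q}_{\tau_1}^*$, is precisely the character $\chi_g$ twisting the pullback. I would record that $\chi_g$ depends only on $g$ (equivalently on $\tau_1',\tau_2'$) and not on $\sigma^{(\alpha)}$, which is exactly the content of the slope cancellation trick's independence statement.

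The remaining step is bookkeeping: by Theorem \ref{thm:kyl_shift}(1) the indecomposable summands of the toric bundle $\cu{E}(\tau_2)$ are automatically toric, and by Proposition \ref{prop:kly_summand_unique} they are well-defined up to reordering, so the condition in Definition \ref{def:tropical} can be checked summand by summand, which the weight computation above does. I expect the main obstacle to be the careful tracking of the closed gluing data $\ol{s}$ and the correction factors $k_{\tau_1'\tau_2'}$ inside $\til{H}_{\ol{s}}(g)$: these contribute only scalar (weight-zero) factors in $\bb{C}^{\times}$, so they do not affect the $\cu{Q}_{\tau_1}\otimes\bb{C}^{\times}$-weight and hence do not disturb equivariance, but one must confirm this explicitly rather than assume it. Once it is checked that the $\ol{s}$- and ${\bf{k}}_s$-factors are genuinely weight-zero, the equivariance of the embedding follows from the pure-weight computation, and the chosen toric structures on $\{\cu{E}(\tau)\}_{\tau\in\msc{P}}$ constitute a tropical structure on $\cu{E}_0$, proving it is a tropical locally free sheaf.
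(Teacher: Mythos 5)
Your proposal is correct and follows essentially the same route as the paper: equip each $\cu{E}({\bf{g}}(\tau'))$ with the equivariant structure from Remark \ref{rem:equiv_str}, compute the $\cu{Q}_{\tau_1}\otimes\bb{C}^{\times}$-weight of $H_{\ol{s}}(g)(1_{\sigma^{(\alpha)}}(\tau_2))$ to see it is pure of weight $m_{\tau_1'}(\sigma^{(\alpha)})$, and identify the twisting character with the $\sigma'$-independent affine difference $f=p_g^*m_{\tau_2'}(\sigma')-m_{\tau_1'}(\sigma')$. The only cosmetic difference is that you make explicit the weight-zero nature of the $\ol{s}$- and ${\bf{k}}_s$-scalars and the reduction to indecomposable summands via Theorem \ref{thm:kyl_shift}, which the paper leaves implicit.
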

\begin{proof}
    Choose any representative $\{\varphi_{\tau'}\}$ of $\varphi$ to give $\cu{E}({\bf{g}}(\tau'))$ a structure of toric vector bundle over the strata $X_{\tau}$. For $\tau_1'\subset\tau_2'$, recall that the gluing isomorphism $H_{\ol{s}}(g):\cu{E}({\bf{g}}(\tau_2))\to F_{\ol{s}}(g)^*\cu{E}({\bf{g}}(\tau_1))$ takes the form
    $$1_{\sigma^{(\alpha)}}(\tau_2')\mapsto\sum_{\beta=1}^rk_{\tau_1^{(\alpha)}\tau_2^{(\alpha)}}^{-1}h_{\sigma^{(\alpha)}\sigma^{(\beta)}}(g)z^{m_{\tau_1'}(\sigma^{(\alpha)})-m_{\tau_1'}(\sigma^{(\beta)})}F_{\ol{s}}(g)^*1_{\sigma^{(\beta)}}(\tau_1')$$
    on the chart $V_{\tau_2\to\sigma}\subset X_{\tau_2}$ with respective to equivariant frames. Let $\lambda\in\cu{Q}_{\tau_1}\otimes\bb{C}^{\times}$. Applying $\lambda$ to the left hand side, we have
    $$\lambda\cdot 1_{\sigma}^{(\alpha)}(\tau_2')=\lambda^{p_g^*m_{\tau_2'}(\sigma^{(\alpha)})}1_{\sigma}^{(\alpha)}(\tau_2'),$$
    while when $\lambda$ is applied to the right hand side, we have
    $$\lambda^{m_{\tau_1'}(\sigma^{(\alpha)})}\sum_{\beta=1}^rk_{\tau_1^{(\alpha)}\tau_2^{(\alpha)}}^{-1}h_{\sigma^{(\alpha)}\sigma^{(\beta)}}(g)z^{m_{\tau_1'}(\sigma^{(\alpha)})-m_{\tau_1'}(\sigma^{(\beta)})}F_{\ol{s}}(g)^*1_{\tau_1'}^{(\beta)}(\sigma).$$
    As $\tau_1'\subset\tau_2'$, the slope difference $f:=p_g^*m_{\tau_2'}(\sigma')-m_{\tau_1'}(\sigma')$ is independent of $\sigma'$ as long as $\sigma'\supset\tau_2'$, which means $f\in\cu{Q}_{\tau_1}^*$. This affine function gives a character $\chi_f$ on $X_{\tau_1}$. Then it is easy to see that the map $H_{\ol{s}}(g)|_{\cu{E}({\bf{g}}(\tau_2'))}:\cu{E}({\bf{g}}(\tau_2'))\to F_{\ol{s}}(g)^*(\cu{E}({\bf{g}}(\tau_1'))\otimes(\chi_f))$ is a $\cu{Q}_{\tau_1}\otimes\bb{C}^{\times}$-equivariant embedding. In particular, $H_{\ol{s}}(g)$ is equivariant on any indecomposable summands of $\cu{E}({\bf{g}}(\tau_2'))$.
\end{proof}

Given a tropical locally free sheaf $\cu{E}_0$ on $X_0(B,\msc{P},s)$, we now construct a tropical Lagrangian multi-section over $(B,\msc{P})$. Let $\tau\in\msc{P}$. By assumption, $\cu{E}(\tau):=q_{\tau}^*\cu{E}_0$ admits a structure of toric vector bundle over the toric strata $X_{\tau}$. Let $\cu{E}^{(\alpha)}(\tau)$ be an indecomposable summand of $\cu{E}(\tau)$ and define
$$\tau^{(\alpha)}:=\tau\times\{\cu{E}^{(\alpha)}(\tau)\}.$$
As $\tau^{(\alpha)}\cong\tau$ via the first projection, we also refer them as \emph{cells}. From now on, we write $\cu{E}^{(\alpha)}(\tau)$ as $\cu{E}(\tau^{(\alpha)})$ and $\mu_{\tau^{(\alpha)}}$ for the multiplicity of $\cu{E}^{(\alpha)}(\tau)$ in $\cu{E}(\tau)$. Let $\msc{P}'(\tau)$ be the collection of all cells (counting with multiplicity) with the first projection being $\tau$. For $g:\tau_1\to\tau_2$, we define $\mf{p}_{\tau_2\tau_1}:\msc{P}'(\tau_2)\to\msc{P}'(\tau_1)$ by mapping $\tau_2'$ to $\tau_1'$ for which $\cu{E}(\tau_2')$ is summand of $F_{\ol{s}}(g)^*\cu{E}(\tau_1')$ via the equality $\cu{E}(\tau_2)=F_{\ol{s}}(g)^*\cu{E}(\tau_1)$. Since $\cu{E}_0$ is a global sheaf on $X_0(B,\msc{P},s)$, it is clear that
$$\mf{p}_{\tau_2\tau_1}\circ\mf{p}_{\tau_3\tau_2}=\mf{p}_{\tau_3\tau_1},$$
whenever $\tau_1\subset\tau_2\subset\tau_3$. Define $\mu:\msc{P}'\to\bb{Z}_{>0}$ by
$$\mu(\tau'):=\rk(\cu{E}(\tau')).$$
Then it is clear that $(\{\msc{P}'(\tau)\}_{\tau\in\msc{P}},\{\mf{p}_{\tau_2\tau_1}\}_{\tau_1\subset\tau_2},\mu)$ defines an abstract branched covering (see Appendix \ref{sec:appA} for this notion) over $(B,\msc{P})$. By the construction in Appendix \ref{sec:appA}, the data $(\{\msc{P}'(\tau)\}_{\tau\in\msc{P}},\{\mf{p}_{\tau_2\tau_1}\}_{\tau_1\subset\tau_2},\mu)$ induce a branched covering map of tropical spaces and we denote it by $\pi_{\cu{E}_0}:(L_{\cu{E}_0},\msc{P}_{\cu{E}_0}',\mu_{\cu{E}_0})\to(B,\msc{P})$.

It remains to construct the fan structure and the piecewise linear function. Let's first make the following

\begin{definition}
    Let $\Sigma$ be a complete fan. Two tropical Lagrangian multi-sections $\bb{L}_1,\bb{L}_2$ over $\Sigma$ is said to be \emph{differ by a shift of affine function} if there exists an isomorphism of weighted cone complexes $f:(L_1,\Sigma_1,\mu_1)\to(L_2,\Sigma_2,\mu_2)$ such that $\pi_1=\pi_2\circ f$ and $f^*\varphi_2-\varphi_1$ is an affine function on $L_1$.
\end{definition}

Let $\bb{L}_{\tau'}$ be the associated tropical Lagrangian multi-section (see \cite{branched_cover_fan} or \cite{Suen_trop_lag} for the construction) of an indecomposable summand $\cu{E}(\tau')$, which is always separable (Definition 3.13 and Proposition 3.21 in \cite{Suen_trop_lag}). By Theorem \ref{thm:kyl_shift}, different choice of equivariant structure on $\cu{E}(\tau')$ only leads to a shift of affine function on $L_{\tau'}$.

For $e:\tau\to\sigma$ and $\tau'\subset\sigma'$, $\cu{E}(\sigma')$ is by definition an indecomposable summand of $F_{\ol{s}}(e)^*\cu{E}(\tau')$. Hence by Theorem \ref{thm:kyl_shift}, up to a shift of affine function if necessary, the tropical Lagrangian multi-section $\bb{L}_{\sigma'}$ is a localization (see Appendix \ref{sec:appB} for this notion) of $\bb{L}_{\tau'}$ along some cone in $\Sigma_{\tau'}$ whose projection to $\Sigma_{\tau}$ is $K_{\tau}(\sigma)$. By Theorem \ref{thm:bundle_localization}, such cone is unique as $\bb{L}_{\tau'}$ is separable. Define $S_{\tau'}|_{\sigma'\cap W_{\tau'}}:\sigma'\cap W_{\tau'}\to K_{\tau'}(\sigma')$ by
$$x'\mapsto ((S_{\tau}\circ\pi)(x'),m_{\tau'}(\sigma')),$$
for $x'\in\sigma'\cap W_{\tau'}$ and $m_{\tau'}(\sigma')$ the slope of $\varphi_{\tau'}$ on the cone $K_{\tau'}(\sigma')$. Since $\varphi_{\tau'}$ is continuous, it is not hard to see that $S_{\tau'}$ is well-defined and continuous. This gives the desired fan structure $S_{\tau'}:W_{\tau'}\to|L_{\tau'}|$. Define $\varphi_{\cu{E}_0}:=\{S_{\tau'}^*\varphi_{\tau'}\}_{\tau'\in\msc{P}'}$. By Theorem \ref{thm:kyl_shift}, shifting $\bb{L}_{\tau_2'}$ by an affine function if necessary, we may assume it is a localization of $\bb{L}_{\tau_1'}$ if $\cu{E}(\tau_2')$ is a summand of $\cu{E}(\tau_1')|_{X_{\tau_2}}$, which means $\varphi_{\cu{E}_0}\in H^0(L_{\cu{E}_0},\cu{MPL}_{\msc{P}_{\cu{E}_0}'})$.

\begin{definition}
    Let $\cu{E}_0$ be a tropical locally free sheaf over $X_0(B,\msc{P},s)$. The data $\bb{L}_{\cu{E}_0}:=(L_{\cu{E}_0},\msc{P}_{\cu{E}_0}',\mu_{\cu{E}_0},\pi_{\cu{E}_0},\varphi_{\cu{E}_0})$ is called the \emph{associated tropical Lagrangian multi-section of $\cu{E}_0$}.
\end{definition}

Given a tropical locally free sheaf $\cu{E}_0$, we can also associate a data ${\bf{D}}_s(\cu{E}_0)=({\bf{g}},{\bf{h}}_s)\in\msc{D}_s(\bb{L}_{\cu{E}_0})$ such that
$$\cu{E}_0\cong\cu{E}_0(\bb{L}_{\cu{E}_0},{\bf{D}}_s(\cu{E}_0)).$$
Indeed, ${\bf{g}}$ exists because of the fact that each $\cu{E}(\tau')$ is toric. For ${\bf{h}}_s$, note that for $g:\tau_1\to\tau_2$, by definition, $\cu{E}(\tau_2')$ is mapped to a summand of $F_{\ol{s}}(g)^*\cu{E}(\tau_1')$ for a unique $\tau_1'$. Let $\{1_{\sigma}^{(\alpha)}(\tau_1')\},\{1_{\sigma}^{(\alpha)}(\tau_2')\}$ be equivariant frame of $\cu{E}(\tau_1'),\cu{E}(\tau_2')$, respectively. By definition, there is a character $\chi_g$ so that the natural map $H_{\ol{s}}(g):\cu{E}(\tau_2')\to F_{\ol{s}}(g)^*(\cu{E}(\tau_1')\otimes(\chi_g))$ is $\cu{Q}_{\tau_1}\otimes\bb{C}^{\times}$-equivariant. Let $1_{\chi_g}(\sigma)$ be an equivariant frame of $(\chi_g)$ on the chart $V_{\tau_1\to\sigma}\subset X_{\tau_1}$. Then $H_{\ol{s}}(g)$ is of the form
$$1_{\sigma}^{(\alpha)}(\tau_2')\mapsto\sum_{\beta=1}^{\mu(\tau_1')}h_{\sigma^{(\alpha)}\sigma^{(\beta)}}(g)_sz^{m_{\tau_1'}(\sigma^{(\alpha)})-m_{\tau_1'}(\sigma^{(\beta)})}F_{\ol{s}}(g)^*(1_{\sigma}^{(\beta)}(\tau_1')\otimes 1_{\sigma}(\chi_g)),$$
for some $h_{\sigma^{(\alpha)}\sigma^{(\beta)}}(g)_s\in\bb{C}$ that is non-zero only if $m_{\tau_1'}(\sigma^{(\alpha)})-m_{\tau_1'}(\sigma^{(\beta)})\in K_{\tau_2\to\sigma}^{\vee}\cap\cu{Q}_{\tau_2}^*$. As $(\chi_g)$ is just the trivial line bundle on $X_{\tau_1}$, we have a well-define map
$$1_{\sigma}^{(\alpha)}(\tau_2')\mapsto\sum_{\beta=1}^{\mu(\tau_1')}h_{\sigma^{(\alpha)}\sigma^{(\beta)}}(g)_sz^{m_{\tau_1'}(\sigma^{(\alpha)})-m_{\tau_1'}(\sigma^{(\beta)})}F_{\ol{s}}(g)^*1_{\sigma}^{(\beta)}(\tau_1').$$
This gives the data ${\bf{h}}_s$. On the other hand, the assignment
$$\bb{L}\mapsto\cu{E}_0(\bb{L},{\bf{D}}_s)\mapsto\bb{L}_{\cu{E}_0(\bb{L},{\bf{D}}_s)}$$
rarely be the identity, even in the case of one toric piece \cite{Suen_trop_lag}. This indicates the fact that non-Hamiltonian equivalent Lagrangian branes can still be equivalent in the derived Fukaya category (see \cite{CS_SYZ_imm_Lag} for this phenomenon). In our case, although $\bb{L}$ and $\bb{L}_{\cu{E}_0(\bb{L},{\bf{D}}_s)}$ are not isomorphic, they are related by a covering morphism (c.f. Definition \ref{def:covering_morphism}).

\begin{proposition}\label{prop:L_L_E}
    Let $\cu{E}_0:=\cu{E}_0(\bb{L},{\bf{D}}_s)$. There is a covering morphism $f:\bb{L}_{\cu{E}_0}\to\bb{L}$.
\end{proposition}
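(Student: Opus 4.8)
The plan is to build $f$ from the toric indecomposable decomposition of the bundles $\cu{E}({\bf{g}}(\tau'))$ out of which $\cu{E}_0=\cu{E}_0(\bb{L},{\bf{D}}_s)$ is glued. On a stratum $X_\tau$ one has $\cu{E}(\tau)=\cu{E}_0|_{X_\tau}=\bigoplus_{\tau':\pi(\tau')=\tau}\cu{E}({\bf{g}}(\tau'))$, where $\tau'$ ranges over the lifts of $\tau$ in $\bb{L}$ and $\cu{E}({\bf{g}}(\tau'))$ is the rank $\mu(\tau')$ toric vector bundle whose equivariant structure is fixed in Remark \ref{rem:equiv_str}. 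By Proposition \ref{prop:kly_summand_unique}, the toric indecomposable summands of $\cu{E}(\tau)$ are exactly the summands of the various $\cu{E}({\bf{g}}(\tau'))$, and these are by definition the cells $\til{\tau}$ of $\msc{P}_{\cu{E}_0}'$ over $\tau$. First I would define $f$ on cells by sending each such $\til{\tau}$ to the lift $\tau'$ for which $\cu{E}(\til{\tau})$ is a summand of $\cu{E}({\bf{g}}(\tau'))$. Because $\pi_{\cu{E}_0}$ and $\pi$ both restrict to isomorphisms onto $\tau$, this forces $f$ to restrict on each cell to the isomorphism $(\pi|_{\tau'})^{-1}\circ\pi_{\cu{E}_0}$, so that $\pi_{\cu{E}_0}=\pi\circ f$ holds tautologically and cells go isomorphically to cells.

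Next I would promote this cellular assignment to a morphism of tropical spaces using the local models supplied by the fan structures. The key combinatorial point is that the gluing isomorphism $H_{\ol{s}}(g):\cu{E}({\bf{g}}(\tau_2))\to F_{\ol{s}}(g)^*\cu{E}({\bf{g}}(\tau_1))$ is block diagonal with respect to the lifts: by Condition (H1) it carries $\cu{E}({\bf{g}}(\tau_2'))$ into $F_{\ol{s}}(g)^*\cu{E}({\bf{g}}(\tau_1'))$ for the unique lift $\tau_1'\subset\tau_2'$. Hence the localization map $\mf{p}_{\tau_2\tau_1}$ defining $L_{\cu{E}_0}$ sends a summand of $\cu{E}({\bf{g}}(\tau_2'))$ to a summand of $F_{\ol{s}}(g)^*\cu{E}({\bf{g}}(\tau_1'))$ with $\tau_1'\subset\tau_2'$, so $f$ commutes with the face maps. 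On the level of cone complexes, the summand $\cu{E}(\til{\tau})$ of $\cu{E}({\bf{g}}(\tau'))$ has its associated (connected, by separability) cone complex $L_{\til{\tau}}$ realized as the sub-branched-cover of $\Sigma_{\tau'}$ spanned by the maximal cones carrying the weights of $\cu{E}(\til{\tau})$; the resulting closed embeddings $L_{\til{\tau}}\hookrightarrow\Sigma_{\tau'}$ over $\Sigma_\tau$ are the local models of $f$, they are jointly surjective onto $\Sigma_{\tau'}$, and they pull $\cu{PL}_{\msc{P}'}$ back to $\cu{PL}_{\msc{P}_{\cu{E}_0}'}$. Patching them along the face relations, via the abstract branched covering formalism of Appendix \ref{sec:appA}, yields the morphism $f:L_{\cu{E}_0}\to L$.

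It then remains to verify the three remaining conditions of Definition \ref{def:covering_morphism}. Surjectivity is immediate since each $\cu{E}({\bf{g}}(\tau'))\neq 0$ contributes at least one summand. For the multiplicity, the fibre of $f$ over $\tau'$ is precisely the set of indecomposable summands of $\cu{E}({\bf{g}}(\tau'))$, so additivity of rank gives $Tr_f(\mu_{\cu{E}_0})(\tau')=\sum_{f(\til{\tau})=\tau'}\rk(\cu{E}(\til{\tau}))=\rk(\cu{E}({\bf{g}}(\tau')))=\mu(\tau')$. Finally, for $\varphi_{\cu{E}_0}=f^*\varphi$, commutativity of the fan structures gives $f^*\varphi|_{W_{\til{\tau}}}=S_{\til{\tau}}^*(\varphi_{\tau'}|_{L_{\til{\tau}}})$, whose slopes are the weights $m_\tau(\sigma^{(\alpha)})$ of $\cu{E}({\bf{g}}(\tau'))$ indexed by the cones of $L_{\til{\tau}}$. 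By part (2) of Theorem \ref{thm:kyl_shift} these differ from the weights of $\cu{E}(\til{\tau})$, namely the slopes of $\varphi_{\til{\tau}}$, by a single character $\chi$; since $\chi$ is a global affine function, the two local representatives differ by an affine function and so $\varphi_{\cu{E}_0}$ and $f^*\varphi$ agree as sections of $\cu{MPL}_{\msc{P}_{\cu{E}_0}'}$.

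The step I expect to be the main obstacle is the middle one: verifying that the cell-wise sub-branched-cover embeddings $L_{\til{\tau}}\hookrightarrow\Sigma_{\tau'}$ genuinely patch into a single morphism of tropical spaces across all incident strata. Concretely, one must check that the shift-of-character freedom in Theorem \ref{thm:kyl_shift}, used locally to identify each $\cu{E}(\til{\tau})$ with a genuine toric summand and to build $\bb{L}_{\til{\tau}}$, can be arranged compatibly along every chain $\tau_1\subset\tau_2\subset\cdots$, so that the local affine discrepancies between $\varphi_{\cu{E}_0}$ and $f^*\varphi$ are consistent on overlaps and the equality holds globally in $\cu{MPL}_{\msc{P}_{\cu{E}_0}'}$. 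Once $f$ is in place, surjectivity and the multiplicity identity are routine, and the whole statement is the global counterpart of the single-piece non-identity phenomenon already visible in \cite{Suen_trop_lag}.
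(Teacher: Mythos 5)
Your proposal is correct and follows the same route as the paper: the paper simply defines $f$ by collapsing, for each $\tau'\in\msc{P}'$, the set $S(\tau')$ of cells of $\msc{P}_{\cu{E}_0}'$ whose bundles sum to $\cu{E}({\bf{g}}(\tau'))$ onto $\tau'$, and then declares continuity, $Tr_f(\mu_{\cu{E}_0})=\mu$ and preservation of $\varphi$ to be clear. Your write-up merely makes explicit the verifications (block-diagonality of $H_{\ol{s}}(g)$ via (H1), rank additivity, and insensitivity of $\cu{MPL}$ to the affine shifts from Theorem \ref{thm:kyl_shift}) that the paper leaves implicit, and your flagged ``obstacle'' is already absorbed into the construction of $\varphi_{\cu{E}_0}$ itself.
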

\begin{proof}
    For $\tau'\in\msc{P}'$, let $S(\tau')$ be the set of all lifts $\tau^{(\alpha)}\in\msc{P}_{\cu{E}_0}'$ of $\tau\in\msc{P}$ so that
    $$\bigoplus_{\tau^{(\alpha)}\in S(\tau')}\cu{E}(\tau^{(\alpha)})=\cu{E}({\bf{g}}(\tau')).$$
    Then $f$ is defined to by mapping all cells in $S(\tau')$ to $\tau'\in\msc{P}'$. It is clear from the definition of $f$ that it is continuous, preserves polyhedral compositions, the piecewise linear functions and satisfies $Tr_f(\mu_{\cu{E}_0})=\mu$. 
\end{proof}

We end this section by proving the following

\begin{theorem}\label{thm:many_examples}
    Let $\Xi\subset N_{\bb{R}}$ be a polytope centered at the origin so that the natural affine structure with singularities on the boundary $B:=\partial\Xi$ is integral and $\Sigma$ be the fan obtained by taking cones of proper faces of $\Xi$. Let $X_{\Sigma}$ be the projective toric variety associated to a $\Sigma$ and $X_0:=\partial X_{\Sigma}$ the toric boundary. Then $\cu{E}_0:=\cu{E}|_{X_0}$ is a tropical locally free sheaf.
\end{theorem}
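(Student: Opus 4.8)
The plan is to exhibit $\cu{E}_0=\cu{E}|_{X_0}$ as a tropical locally free sheaf by reading off the toric structures of Definition \ref{def:tropical} directly from the big-torus equivariant structure of $\cu{E}$ (which is understood to be a toric vector bundle on $X_{\Sigma}$), and then verifying the compatibility condition via Klyachko's Theorem \ref{thm:kyl_shift}. First I would set up the standard dictionary between the central object and the toric boundary. With the gluing data $s$ induced by the toric structure of $X_{\Sigma}$, which is cohomologically trivial, one has $X_0(B,\msc{P},s)\cong\partial X_{\Sigma}$, and under this identification every closed stratum $X_{\tau}$ (with $\tau\in\msc{P}$ a proper face of $\Xi$) becomes the orbit closure $V(C_{\tau})\subset X_{\Sigma}$ of the cone $C_{\tau}:=\bb{R}_{\geq 0}\cdot\tau\in\Sigma$, its quotient torus being $\cu{Q}_{\tau}\otimes\bb{C}^{\times}=T/T_{C_{\tau}}$, while every structure morphism $F_{\ol{s}}(g):X_{\tau_2}\to X_{\tau_1}$ (for $g:\tau_1\to\tau_2$, i.e. $\tau_1\subset\tau_2$ hence $C_{\tau_1}\subset C_{\tau_2}$) is the orbit-closure inclusion $V(C_{\tau_2})\hookrightarrow V(C_{\tau_1})$.

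With this dictionary the toric structures are immediate. For each $\tau$ we have $\cu{E}(\tau)=q_{\tau}^*\cu{E}_0=\cu{E}|_{V(C_{\tau})}$, and since $V(C_{\tau})$ is $T$-invariant and $\cu{E}$ is $T$-equivariant, $\cu{E}(\tau)$ inherits a $T$-equivariant structure. The $T$-action on $V(C_{\tau})$ factors through the quotient torus $\cu{Q}_{\tau}\otimes\bb{C}^{\times}$; choosing a splitting $\cu{Q}_{\tau}\otimes\bb{C}^{\times}\hookrightarrow T$ of $T\twoheadrightarrow T/T_{C_{\tau}}$ and restricting the $T$-structure along it equips $\cu{E}(\tau)$ with a $\cu{Q}_{\tau}\otimes\bb{C}^{\times}$-equivariant structure, that is, a toric vector bundle structure over $X_{\tau}$. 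This supplies the first datum of Definition \ref{def:tropical}.

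It remains to verify the compatibility condition, which I expect to be the main point. Fix $g:\tau_1\to\tau_2$. Because $\cu{E}(\tau_i)=\cu{E}|_{V(C_{\tau_i})}$ and $F_{\ol{s}}(g)$ is the orbit-closure inclusion, we have the literal equality of $\cu{O}$-modules $F_{\ol{s}}(g)^*\cu{E}(\tau_1)=\cu{E}(\tau_2)$, so the content is purely to reconcile the $\cu{Q}_{\tau_1}\otimes\bb{C}^{\times}$-structure restricted from $\cu{E}(\tau_1)$ with the chosen $\cu{Q}_{\tau_2}\otimes\bb{C}^{\times}$-structure. The $\cu{Q}_{\tau_1}\otimes\bb{C}^{\times}$-action on $V(C_{\tau_2})$ factors through $p_g:\cu{Q}_{\tau_1}\otimes\bb{C}^{\times}\to\cu{Q}_{\tau_2}\otimes\bb{C}^{\times}$ with kernel $K$, so I would decompose $F_{\ol{s}}(g)^*\cu{E}(\tau_1)=\bigoplus_{\chi}E_{\chi}$ into $K$-isotypic pieces, each $E_{\chi}$ being genuinely $\cu{Q}_{\tau_2}\otimes\bb{C}^{\times}$-equivariant with $K$ acting through the character $\chi\in\widehat{K}=\cu{Q}_{\tau_1}^*/p_g^*\cu{Q}_{\tau_2}^*$. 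Given a toric indecomposable summand $\cu{E}^{(\alpha)}(\tau_2)$, Theorem \ref{thm:kyl_shift}(1) says it is indecomposable as an ordinary bundle, so by Proposition \ref{prop:kly_summand_unique} it is isomorphic to an indecomposable summand of a single $E_{\chi_0}$. I would then take $\chi_g\in\cu{Q}_{\tau_1}^*$ to be a lift of $-\chi_0$: twisting $\cu{E}(\tau_1)$ by $(\chi_g)$ shifts the $K$-weights so that the relevant summand of $F_{\ol{s}}(g)^*(\cu{E}(\tau_1)\otimes(\chi_g))$ carries trivial $K$-weight, whence its $\cu{Q}_{\tau_1}\otimes\bb{C}^{\times}$-structure is the pullback of a $\cu{Q}_{\tau_2}\otimes\bb{C}^{\times}$-structure. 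By Theorem \ref{thm:kyl_shift}(2) this pulled-back structure agrees with the chosen structure on $\cu{E}^{(\alpha)}(\tau_2)$ up to a character $\chi'\in\cu{Q}_{\tau_2}^*$; adjusting $\chi_g$ by $p_g^*\chi'$ (which leaves the $\widehat{K}$-coset unchanged) makes the embedding $\cu{E}^{(\alpha)}(\tau_2)\hookrightarrow F_{\ol{s}}(g)^*(\cu{E}(\tau_1)\otimes(\chi_g))$ genuinely $\cu{Q}_{\tau_1}\otimes\bb{C}^{\times}$-equivariant, as required.

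The hard part, and essentially the only place real work hides, is this last step: the bookkeeping of the two tori $\cu{Q}_{\tau_1}\otimes\bb{C}^{\times}$ and $\cu{Q}_{\tau_2}\otimes\bb{C}^{\times}$ acting on the same underlying bundle $\cu{E}(\tau_2)$, together with the production of the reconciling character $\chi_g$; Klyachko's theorem does the essential lifting, reducing the matching of equivariant structures to a weight shift. I would also confirm at the outset that the gluing data for $\partial X_{\Sigma}$ is indeed the trivial one, so that $F_{\ol{s}}(g)$ really is the orbit-closure inclusion and the equality $F_{\ol{s}}(g)^*\cu{E}(\tau_1)=\cu{E}(\tau_2)$ holds on the nose.
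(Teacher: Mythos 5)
Your proposal is correct, and the first half (identifying strata with orbit closures, noting the trivial gluing data, and restricting the ambient $T$-equivariant structure along a chosen splitting $\iota_{\tau}$ of $N\to\cu{Q}_{\tau}$) is exactly what the paper does. Where you genuinely diverge is in verifying the compatibility condition of Definition \ref{def:tropical}. The paper routes this through its tropical Lagrangian multi-section formalism: it invokes Theorem \ref{thm:bundle_localization} to say that the multi-section $\bb{L}_{\tau}^{(\alpha)}$ of an indecomposable summand is a localization of $\bb{L}_{\Sigma}$, so that $p_{\tau}^*\varphi_{\tau}^{(\alpha)}-\varphi$ is affine near the relevant cone; applying $\iota_{\tau_1}^*$ to the difference of two such expressions produces the affine function, i.e.\ the character $\chi_g$. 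You instead argue directly on the equivariant structures, decomposing $F_{\ol{s}}(g)^*\cu{E}(\tau_1)$ into isotypic pieces for the kernel torus $K$ of $p_g$ and using Theorem \ref{thm:kyl_shift} twice to kill the $K$-weight and then reconcile the two $\cu{Q}_{\tau_2}$-structures up to a character. These are two packagings of the same mathematical content -- the affine function $f_{\tau}$ of the paper is precisely your weight $\chi_0$ (adjusted by $p_g^*\chi'$) -- but yours is more self-contained, trading the Appendix B localization machinery for an explicit representation-theoretic bookkeeping, while the paper's version has the advantage of simultaneously producing the combinatorial data ($\bb{L}_{\Sigma}$ and its localizations) that is used elsewhere. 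One point to be slightly careful about in your version, which the paper's phrasing also elides: Krull--Schmidt only places a toric indecomposable summand $\cu{E}^{(\alpha)}(\tau_2)$ in a single $K$-isotypic piece up to isomorphism, not as a literal subbundle, so one should either choose the toric structure and its indecomposable decomposition on $\cu{E}(\tau_2)$ to refine the $K$-isotypic decomposition (which Definition \ref{def:tropical} permits, since the tropical structure is a choice) or note that the required equivariant embedding may differ from the tautological inclusion by an automorphism of $\cu{E}(\tau_2)$.
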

\begin{proof}
Let $\msc{P}$ be given by proper faces of $\Xi$. The affine structure on $B$ is given by the fan structure
$$W_v\subset N_{\bb{R}}\to N_{\bb{R}}/\bb{R}\inner{v},$$
for $v\in\Xi$ a vertex. By assumption, this affine structure with singularities is integral and we have $X_0=X_0(B,\msc{P})$. Since each strata of $X_0$ is a toric strata of $X_{\Sigma}$, each $\cu{E}(\tau)$ has a natural $N\otimes\bb{C}^{\times}$-equivariant structure. Since $\cu{E}$ is toric, it has the corresponding tropical Lagrangian multi-section $\bb{L}_{\Sigma}$ over $\Sigma$. Denote the piecewise linear function on $\bb{L}_{\Sigma}$ by $\varphi$. For each proper face $\tau\in\msc{P}$, let $K(\tau)\in\Sigma$ be the corresponding cone. Choose a splitting $\iota_{\tau}$ of the projection $p_{\tau}:N\to N/(\bb{R} K(\tau)\cap N)\cong\cu{Q}_{\tau}$ to equip $\cu{E}(\tau)$ a $\cu{Q}_{\tau}\otimes\bb{C}^{\times}$-equivariant structure over $X_{\tau}$. Let $\cu{E}^{(\alpha)}(\tau)$ be a toric indecomposable summand of $\cu{E}(\tau)$ and $\bb{L}_{\tau}^{(\alpha)}$ be its associated tropical Lagrangian multi-section. It has been shown in Theorem \ref{thm:bundle_localization} that $\bb{L}_{\tau_2}^{(\alpha)}$ is a localization of $\bb{L}_{\Sigma}$ and
$$f_{\tau}:=p_{\tau}^*\varphi_{\tau}^{(\alpha)}-\varphi$$
is an affine function in a neighborhood of the cone that we localized. Therefore, for $g:\tau_1\to\tau_2$, $$p_{\tau_1}^*\varphi_{\tau_1}^{(\beta)}-p_{\tau_2}^*\varphi_{\tau_2}^{(\alpha)}$$
is also an affine function as long as $\cu{E}^{(\alpha)}(\tau_2)\subset F_{\ol{s}}(g)^*\cu{E}^{(\beta)}(\tau_1)$. Note that we have $p_{\tau_2}=p_g\circ p_{\tau_1}$ and $\iota_{\tau}^*$ is a left inverse of $p_{\tau}^*$. By applying $\iota_{\tau_1}^*$, the difference
$$\varphi_{\tau_1}^{(\beta)}-p_g^*\varphi_{\tau_2}^{(\alpha)}$$
is an affine function. This affine function gives a character $\chi_g$ on $X_{\tau_1}$ so that $\cu{E}^{(\alpha)}(\tau_2)\subset F_{\ol{s}}(g)^*\cu{E}^{(\beta)}(\tau_1)$ is a $\cu{Q}_{\tau_1}\otimes\bb{C}^{\times}$-equivariant 
embedding. This completes the proof of the theorem.
\end{proof}

Theorem \ref{thm:many_examples} provides us an abundant source of examples of tropical locally free sheaves that is \emph{smoothable}. Indeed, when $X_0$ is the central fiber of a family of Calabi-Yau hypersurfaces $\cu{X}\subset X_{\Sigma}$. By simply restricting the toric vector bundle $\cu{E}$ on $X_{\Sigma}$ to the family $\cu{X}$, the pair $(X_0,\cu{E}_0)$ is tautologically smoothable to $(\cu{X}_t,\cu{E}|_{\cu{X}_t})$. Although the smoothing problem in this case is trivial, it does provide us many examples of tropical locally free sheaves on $X_0$ that can be smoothed in \emph{any} dimension.

\section{The correspondence}\label{sec:equiv}

In this section, we would like to establish a correspondence between the set of tropical free sheaves modulo isomorphism and the set of tropical Lagrangian multi-sections modulo certain non-trivial equivalence.

If two tropical locally free sheaves $\cu{E}_0,\cu{E}_0'$ are isomorphic, then for any $\tau\in\msc{P}$, $\cu{E}(\tau)\cong\cu{E}'(\tau)$ as ordinary vector bundles on the strata $X_{\tau}$. By Corollary \ref{cor:shift}, their indecomposable summands are isomorphic as toric vector bundles up to shift of characters on $X_{\tau}$. Hence the associated tropical Lagrangian multi-sections of their indecomposable summand only differ from each other by shifts of affine functions. This gives a covering isomorphism $f:\bb{L}_{\cu{E}_0}\xrightarrow{\sim}\bb{L}_{\cu{E}_0'}$ between their associated tropical Lagrangian multi-sections. We define the equivalence between data in $\msc{D}_s(\bb{L})$ on a fixed tropical Lagrangian multi-section $\bb{L}$ tautologically.

\begin{definition}
    Let $\bb{L}$ be an unobstructed tropical Lagrangian multi-section over $(B,\msc{P})$ and ${\bf{D}}_s,{\bf{D}}_s'\in\msc{D}_s(\bb{L})$. We write ${\bf{D}}_s\sim{\bf{D}}_s'$ if $\cu{E}_0(\bb{L},{\bf{D}}_s)\cong\cu{E}_0(\bb{L},{\bf{D}}_s')$.
\end{definition}

Given two unobstructed tropical Lagrangian multi-sections $\bb{L}_1,\bb{L}_2$ over $(B,\msc{P})$ and a data ${\bf{D}}_s\in\msc{D}_s(\bb{L}_2)$. Suppose $f:\bb{L}_2\to\bb{L}_1$ is a covering isomorphism. It is easy to provide a data $f_*{\bf{D}}_s\in\msc{D}_s(\bb{L}_1)$ for $\bb{L}_1$ so that
\begin{equation}\label{eqn:E_1=E_2}
    \cu{E}_0(\bb{L}_1,f_*{\bf{D}}_s)\cong\cu{E}_0(\bb{L}_2,{\bf{D}}_s).
\end{equation}
In particular, if $\cu{E}_0\cong\cu{E}_0'$, then $f_*{\bf{D}}_s(\cu{E}_0)\sim{\bf{D}}_s(\cu{E}_0')$. However, as we have pointed out in Section \ref{sec:E_to_L} that non-isomorphic tropical Lagrangian multi-sections can still give rise to isomorphic tropical locally free sheaves after choosing suitable brane data. These tropical Lagrangian multi-sections should be regarded as equivalent objects in some sense. In the remaining part of this section, we explore this non-trivial equivalence.

\begin{definition}\label{def:order_L}
	Let $\bb{L}_1,\bb{L}_2$ be two tropical Lagrangian multi-sections of same degree over $(B,\msc{P})$. We write $\bb{L}_1\leq\bb{L}_2$ if there exists a covering morphism $f:\bb{L}_2\to\bb{L}_1$.
\end{definition}

Given a covering morphism $f:\bb{L}_2\to\bb{L}_1$ and data ${\bf{D}}_s\in\msc{D}_s(\bb{L}_2)$, we can define the \emph{push-forward $f_*{\bf{D}}_s$ of ${\bf{D}}_s$} by
$$(f_*g)_{f(\sigma_1^{(\alpha)})f(\sigma_2^{(\beta)})}(f(\tau')):=g_{\sigma_1^{(\alpha)}\sigma_2^{(\beta)}}(\tau'),\,(f_*h)_{f(\sigma^{(\alpha)})f(\sigma^{(\beta)})}(g):=h_{\sigma^{(\alpha)}\sigma^{(\beta)}}(g),$$
where $\sigma_1^{(\alpha)},\sigma_2^{(\beta)},\tau'$ are some choices of preimage cells of $f(\sigma_1^{(\alpha)}),f(\sigma_2^{(\beta)}),f(\tau')$ such that 
$$m_{f(\tau')}(f(\sigma_1^{(\alpha)}))=m_{\tau'}(\sigma_1^{(\alpha)}),\,m_{f(\tau')}(f(\sigma_2^{(\beta)}))=m_{\tau'}(\sigma_2^{(\beta)})$$
and $\tau'\subset\sigma_1^{(\alpha)}\cap\sigma_2^{(\beta)}$. It is straightforward to check that $f_*{\bf{D}}_s\in\msc{D}_s(\bb{L}_1)$. Different choices of preimage cells of $f(\sigma^{(\alpha)})$ amount a permutation of the ordered frame $\{1_{\sigma^{(\alpha)}}(\tau)\}_{\alpha=1}^r$ that preserve the $\cu{Q}_{\tau}\otimes\bb{C}^{\times}$-action. Hence such choice won't affect the resulting tropical locally free sheaf.

\begin{definition}\label{def:order}
	Let $(\bb{L}_1,{\bf{D}}_s^{(1)}),(\bb{L}_2,{\bf{D}}_2^{(2)})$ be two tropical Lagrangian branes. We write $(\bb{L}_1,{\bf{D}}_s^{(1)})\leq(\bb{L}_2,{\bf{D}}_s^{(2)})$ if there exists a covering morphism $f:\bb{L}_2\to\bb{L}_1$ such that $f_*{\bf{D}}_s^{(2)}\sim{\bf{D}}_s^{(1)}$.
\end{definition}

\begin{definition}
	Let $(\bb{L}_1,{\bf{D}}_s^{(1)}),(\bb{L}_2,{\bf{D}}_2^{(2)})$ be two tropical Lagrangian branes. We write $(\bb{L}_1,{\bf{D}}_s^{(1)})\sim(\bb{L}_2,{\bf{D}}_2^{(2)})$ if there exists a tropical Lagrangian brane $(\bb{L},{\bf{D}}_s)$ over $(B,\msc{P})$ such that $(\bb{L}_i,{\bf{D}}_s^{(i)})\leq(\bb{L},{\bf{D}}_s)$, for all $i=1,2$. We say $(\bb{L}_1,{\bf{D}}_s^{(1)})$ is \emph{combinatorially equivalent} to $(\bb{L}_2,{\bf{D}}_s^{(2)})$ if there exists a sequence of tropical Lagrangian branes $(\bb{L}_1',{\bf{D}}_s'^{(1)}),(\bb{L}_2',{\bf{D}}_s'^{(2)}),\dots,(\bb{L}_k',{\bf{D}}_s'^{(k)})$ over $(B,\msc{P})$ such that $(\bb{L}_1',{\bf{D}}_s'^{(1)})=(\bb{L}_1,{\bf{D}}_s^{(1)}),(\bb{L}_k',{\bf{D}}_s'^{(k)})=(\bb{L}_2,{\bf{D}}_s^{(2)})$ and $(\bb{L}_{i+1}',{\bf{D}}_s'^{(i+1)})\sim_c(\bb{L}_i',{\bf{D}}_s'^{(i)})$, for all $i=1,\dots,k-1$.
\end{definition}

\begin{remark}
	The relation $\sim_c$ is only reflexive and symmetric. The notion of combinatorially equivalence is the transitive closure of $\sim_c$ and hence, an equivalent relation. Geometrically, $\bb{L}_1,\bb{L}_2$ are combinatorially equivalent means one can fold or unfold cells of $\bb{L}_1$ to obtain $\bb{L}_2$ in finite steps.
\end{remark}

We define
\begin{align*}
    \text{TLFS}(X_0(B,\msc{P},s)):=&\,\frac{\{\text{Tropical locally free sheaves on }X_0(B,\msc{P},s)\}}{\text{isomorphism}}\\
    \text{TLB}(B,\msc{P},s):=&\,\frac{\{\text{Tropical Lagrangian branes over }(B,\msc{P},s)\}}{\text{combinatorial equivalence}}.
\end{align*}
By Proposition \ref{prop:L_L_E}, we have the following

\begin{theorem}\label{thm:bijection}
    We have a canonical bijection
    $$\cu{F}:\mathrm{TLFS}(X_0(B,\msc{P},s))\to\mathrm{TLB}(B,\msc{P},s),$$
    given by $\cu{E}_0\mapsto(\bb{L}_{\cu{E}_0},{\bf{D}}_s(\cu{E}_0))$. Its inverse is given by $(\bb{L}_{\cu{E}_0},{\bf{D}}_s)\mapsto\cu{E}_0(\bb{L}_{\cu{E}_0},{\bf{D}}_s)$.
\end{theorem}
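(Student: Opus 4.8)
The plan is to construct the two maps $\cu{F}$ and its proposed inverse explicitly, verify each lands in the correct target set, and then check both compositions are the identity modulo the relevant equivalence. I would first establish that $\cu{F}$ is \emph{well-defined} on isomorphism classes: given $\cu{E}_0\cong\cu{E}_0'$, I must show $(\bb{L}_{\cu{E}_0},{\bf{D}}_s(\cu{E}_0))$ and $(\bb{L}_{\cu{E}_0'},{\bf{D}}_s(\cu{E}_0'))$ are combinatorially equivalent. This is exactly the content of the discussion preceding Definition \ref{def:order_L}: an isomorphism $\cu{E}_0\cong\cu{E}_0'$ induces isomorphisms $\cu{E}(\tau)\cong\cu{E}'(\tau)$ on each strata, and by Corollary \ref{cor:shift} their toric indecomposable summands agree up to character shifts, so the associated tropical Lagrangian multi-sections differ only by shifts of affine functions. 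This yields a covering \emph{isomorphism} $f:\bb{L}_{\cu{E}_0}\xrightarrow{\sim}\bb{L}_{\cu{E}_0'}$ with $f_*{\bf{D}}_s(\cu{E}_0)\sim{\bf{D}}_s(\cu{E}_0')$ via \eqref{eqn:E_1=E_2}; a covering isomorphism in both directions exhibits the two branes as $\sim_c$-related (take $(\bb{L},{\bf{D}}_s)$ to be either one), hence combinatorially equivalent.

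Next I would verify the map $\cu{G}:(\bb{L},{\bf{D}}_s)\mapsto\cu{E}_0(\bb{L},{\bf{D}}_s)$ is well-defined on combinatorial-equivalence classes. Since combinatorial equivalence is the transitive closure of $\sim_c$, and $\sim_c$ is generated by the relation $\leq$ of Definition \ref{def:order}, it suffices to show that $(\bb{L}_1,{\bf{D}}_s^{(1)})\leq(\bb{L}_2,{\bf{D}}_s^{(2)})$ implies $\cu{E}_0(\bb{L}_1,{\bf{D}}_s^{(1)})\cong\cu{E}_0(\bb{L}_2,{\bf{D}}_s^{(2)})$. By definition there is a covering morphism $f:\bb{L}_2\to\bb{L}_1$ with $f_*{\bf{D}}_s^{(2)}\sim{\bf{D}}_s^{(1)}$, and the key computation is that $\cu{E}_0(\bb{L}_1,f_*{\bf{D}}_s^{(2)})\cong\cu{E}_0(\bb{L}_2,{\bf{D}}_s^{(2)})$, which is precisely \eqref{eqn:E_1=E_2}; combining with the definition of $\sim$ closes the argument. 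I would then confirm that Proposition \ref{prop:tropical} guarantees $\cu{E}_0(\bb{L},{\bf{D}}_s)$ is genuinely tropical, so $\cu{G}$ lands in $\mathrm{TLFS}$.

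For $\cu{F}\circ\cu{G}=\mathrm{id}$, I would take a brane $(\bb{L},{\bf{D}}_s)$, set $\cu{E}_0:=\cu{E}_0(\bb{L},{\bf{D}}_s)$, and invoke Proposition \ref{prop:L_L_E}, which supplies a covering morphism $f:\bb{L}_{\cu{E}_0}\to\bb{L}$. I must upgrade this to combinatorial equivalence of the branes $(\bb{L}_{\cu{E}_0},{\bf{D}}_s(\cu{E}_0))$ and $(\bb{L},{\bf{D}}_s)$: the morphism $f$ together with the identity $\cu{E}_0\cong\cu{E}_0(\bb{L}_{\cu{E}_0},{\bf{D}}_s(\cu{E}_0))$ (established in Section \ref{sec:E_to_L}) shows $(\bb{L},{\bf{D}}_s)\leq(\bb{L}_{\cu{E}_0},{\bf{D}}_s(\cu{E}_0))$ after checking $f_*{\bf{D}}_s(\cu{E}_0)\sim{\bf{D}}_s$, which follows because both data produce the isomorphic sheaf $\cu{E}_0$. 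Hence the two branes are $\sim_c$-related and the class is fixed. For $\cu{G}\circ\cu{F}=\mathrm{id}$, I take a tropical locally free sheaf $\cu{E}_0$ and use the canonical isomorphism $\cu{E}_0\cong\cu{E}_0(\bb{L}_{\cu{E}_0},{\bf{D}}_s(\cu{E}_0))$ already recorded in Section \ref{sec:E_to_L}; this is the statement that $\cu{G}(\cu{F}(\cu{E}_0))\cong\cu{E}_0$.

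I expect the main obstacle to be the \emph{well-definedness of $\cu{G}$ under $\sim_c$}, specifically verifying carefully that the push-forward construction $f_*{\bf{D}}_s$ produces compatible Kaneyama and gluing data satisfying (G1)--(G3) and (H1)--(H3) and that the resulting sheaves are isomorphic, since $\sim_c$ is only reflexive and symmetric and one must ensure the transitive closure does not enlarge the fibers of $\cu{G}$. The subtlety is that a covering morphism $f$ folds cells with possibly different slopes into one, and one must check the slope-matching conditions $m_{f(\tau')}(f(\sigma^{(\alpha)}))=m_{\tau'}(\sigma^{(\alpha)})$ are consistent with the direct-sum decomposition $\cu{E}({\bf{g}}(f(\tau')))=\bigoplus_{\tau^{(\alpha)}\in S(\tau')}\cu{E}(\tau^{(\alpha)})$ used in Proposition \ref{prop:L_L_E}, so that the glued sheaf is unchanged. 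Everything else reduces to bookkeeping with the covering-morphism and push-forward definitions and the already-proven isomorphisms \eqref{eqn:E_1=E_2} and $\cu{E}_0\cong\cu{E}_0(\bb{L}_{\cu{E}_0},{\bf{D}}_s(\cu{E}_0))$.
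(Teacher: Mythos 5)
Your proposal is correct and follows essentially the same route as the paper: the paper's proof likewise rests on the already-established isomorphism $\cu{E}_0\cong\cu{E}_0(\bb{L}_{\cu{E}_0},{\bf{D}}_s(\cu{E}_0))$ for one composition and on Proposition \ref{prop:L_L_E} (the covering morphism $f:\bb{L}_{\cu{E}_0}\to\bb{L}$) for the other, with well-definedness handled by Corollary \ref{cor:shift} and \eqref{eqn:E_1=E_2} exactly as you describe. You merely make explicit the well-definedness checks that the paper delegates to the surrounding discussion.
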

\begin{proof}
    We have seen that the composition
    $$\cu{E}_0\mapsto(\bb{L}_{\cu{E}_0},{\bf{D}}_s(\cu{E}_0))\mapsto\cu{E}_0(\bb{L}_{\cu{E}_0},{\bf{D}}_s(\cu{E}_0))$$
    is the identity. It remains to shown that the assignment $\cu{E}_0\mapsto(\bb{L}_{\cu{E}_0},{\bf{D}}_s)$ is surjective, that is, given $(\bb{L},{\bf{D}}_s)$, whether $\bb{L}$ is combinatorially equivalent to $\bb{L}_{\cu{E}_0(\bb{L},{\bf{D}}_s)}$. This follows immediately from Proposition \ref{prop:L_L_E}.
\end{proof}

\begin{remark}
 Since every vector bundle on $\bb{P}^1$ splits in to direct sum of line bundles, we see that the ramification locus $S'$ of $\bb{L}_{\cu{E}_0}$ is always of codimension at least 2. In particular, every tropical Lagrangian brane is combinatorial equivalent to a tropical Lagrangian brane with $\codim(S')\geq 2$.
\end{remark}

\appendix

\section{Construction of branched covering maps of tropical spaces via discrete data}\label{sec:appA}

Let $B$ be an integral affine manifold with singularities and $\msc{P}$ a polyhedral decomposition. Branched covering of tropical spaces over $(B,\msc{P})$ can be constructed via discrete data.

\begin{definition}
    Let $B$ be an integral affine manifold with singularities and $\msc{P}$ be a polyhedral decomposition. A \emph{covering data} over $(B,\msc{P})$ is a triple $(\mf{P},\mf{p},\mu)$ that satisfies the following
    \begin{enumerate}
        \item $\mf{P}:=\{\msc{P}'(\tau)\}_{\tau\in\msc{P}}$ is a collection of finite sets, paramatrized by $\msc{P}$. We put $$\msc{P}':=\bigcup_{\tau\in \msc{P}'(\tau)}\msc{P}'(\tau).$$
        \item $\mf{p}:=\{\mf{p}_{\tau_1\tau_2}\}_{\tau_2\subset\tau_1}$ is a collection of surjections $\mf{p}_{\tau_1\tau_2}:\msc{P}'(\tau_1)\to \msc{P}'(\tau_2)$ such that for $\tau_3\subset\tau_2\subset\tau_1$, we have  $\mf{p}_{\tau_2\tau_3}\circ \mf{p}_{\tau_1\tau_2}=\mf{p}_{\tau_1\tau_3}$.
        \item $\mu:\msc{P}'\to\bb{Z}_{>0}$ is a function such that
        $$\sum_{\tau'\in \msc{P}'(\tau)}\mu(\tau')$$
        is a constant independent of $\tau\in\msc{P}$.
    \end{enumerate}
\end{definition}

We construct a branched covering map between tropical spaces as follows. Define a partial ordering $\subset'$ on $\msc{P}'$ by setting
$$\tau_1'\subset'\tau_2'\Longleftrightarrow\tau_1\subset\tau_2\text{ and }\mf{p}_{\tau_1\tau_2}(\tau_2')=\tau_1'.$$
Equip $\msc{P},\msc{P}'$ the poset topology, that is, a subset $\msc{Q}'\subset\msc{P}'$ is closed if and only if it satisfies
$$\sigma'\in\msc{Q}'\text{ and }\tau'\subset'\sigma'\Longrightarrow\tau'\in\msc{Q}'.$$
Then the map $\msc{P}'\to\msc{P}$ given by $\tau'\mapsto\tau$ is continuous. There is another map $B\to\msc{P}$ mapping $x\in B$ to $\tau_x\in\msc{P}$, the unique cell such that $x\in \Int(\tau_x)$, which is also continuous. Define the topological space
$$L:=B\times_{\msc{P}}\msc{P}'.$$
It is not hard to see that $L$ is in fact Hausdorff and paracompact. There is a collection of closed subsets $\msc{P}\times_{\msc{P}}\msc{P}'\cong\msc{P}'$. We can then write $\tau_1'\subset\tau_2'$ instead of $\tau_1'\subset'\tau_2'$ if we regard $\tau_1',\tau_2'$ as subsets in $L$. Let $\pi:L\to B$ be the first projection. It maps elements in $\msc{P}'$ homeomorphic to elements in $\msc{P}$. In particular, we can talk about the relative interior an element $\tau'\in\msc{P}'$, namely,
$$\Int(\tau'):=\pi^{-1}(\Int(\tau))\cap\tau'$$
Define $\mu:L\to\bb{Z}_{>0}$ by
$$\mu:x'\mapsto\mu(\tau_{x'}'),$$
where $\tau_{x'}'$ is the unique element in $\msc{P}'$ for which $x'\in \Int(\tau_{x'}')$. Define the \emph{sheaf of piecewise linear functions on $L$} to be the sheaf
$$\cu{PL}_{\msc{P}'}(U'):=\{\varphi\in C^0(U',\bb{R}):\varphi|_{U'\cap \Int(\sigma')}\circ\pi|_{U'\cap \Int(\sigma')}^{-1}\text{ is an affine function for all }\sigma'\in\msc{P}_{max}'\}$$
and the \emph{sheaf of affine functions on $L$} to be the sheaf associated to the presheaf
$$\cu{A}ff_L(U'):=\lim_{\substack{\longrightarrow\\U\supset\pi(U')}}Aff_B(U).$$
We have $\ul{\bb{R}}_L\subset\cu{A}ff_L\subset\cu{PL}_{\msc{P}'}$. Define $\pi^{\#}:\cu{PL}_{\msc{P}}\to\cu{PL}_{\msc{P}'}$ by pulling back a germ of piecewise linear functions on $B$ to $L$. It is clear that $\pi^{\#}$ preserves affine functions. Hence $\pi:(L,\msc{P}',\mu)\to(B,\msc{P})$ is a branched covering map between tropical spaces that preserves polyhedral decompositions.

\section{Localization}\label{sec:appB}

Let $\Sigma$ be a complete fan and $\bb{L}$ a tropical Lagrangian multi-section over it. We look at $\bb{L}$ in a neighborhood of a cone $\tau'\in\Sigma'$ and construct another tropical Lagrangian multi-section $\bb{L}_{\tau'}$ called the \emph{localization of $\bb{L}$ along $\tau'$}.

Fix $\tau\in\Sigma$. Let $\iota_{\tau}:N_{\tau}\to N$ be a lift of $p_{\tau}:N\to N_{\tau}$, which induces a projection $\iota_{\tau}^*:M\to M_{\tau}$ and a fan $\Sigma_{\tau}$ on $N_{\tau,\bb{R}}$. For any cone $\sigma'\in\Sigma'$ such that $\pi(\sigma')=\sigma\supset\tau$, the slope $m(\sigma')\in M(\sigma)$ gives an element $$m_{\tau}(\sigma'):=\iota_{\tau}^*m(\sigma')\in M_{\tau}/(\sigma^{\perp}\cap M_{\tau}),$$
Define $K_{\tau}(\sigma'):=K_{\tau}(\sigma)\times\{m_{\tau}(\sigma')\}\in\Sigma_{\tau}'$. Let $\tau'\in\Sigma'$ be a lift of $\tau$. Define
$$\text{Star}^{\circ}(\tau'):=\bigcup_{\sigma'\supset\tau'}\Int(\sigma'),$$
the open star of $\tau'$ and
$$\Sigma_{\tau'}':=\{K_{\tau}(\sigma')\,|\,\sigma'\supset\tau'\}.$$
We write an element in $\Sigma_{\tau'}'$ as $K_{\tau'}(\sigma')$ to emphasis its dependence on $\tau'$. This gives a topological space
$$L_{\tau'}:=|\Sigma_{\tau}|\times_{\Sigma_{\tau}}\Sigma_{\tau'}',$$
a projection $\pi_{\tau'}:L_{\tau'}\to|\Sigma_{\tau}|$, a multiplicity map $\mu_{\tau'}(K_{\tau'}(\sigma')):=\mu(\sigma')$, and a piecewise linear function $$\varphi_{\tau'}|_{K_{\tau'}(\sigma')}:=m_{\tau}(\sigma').$$ The slope of $\varphi_{\tau'}$ on a cone $K_{\tau'}(\sigma')\in\Sigma_{\tau'}'$ will be denoted by $m_{\tau'}(\sigma')\in M_{\tau}$, which is of course, equals to $m_{\tau}(\sigma')$. It is clear that $(L_{\tau'},\Sigma_{\tau'}',\mu_{\tau'},\pi_{\tau'},\varphi_{\tau'})$ is a tropical Lagrangian multi-section over $\Sigma_{\tau}$. We denote it by $\bb{L}_{\tau'}$. It's degree is given by $\mu(\tau')$.
    
To understand the relation between $\varphi$ and $\varphi_{\tau'}$, we define a projection $p_{\tau'}:\text{Star}^{\circ}(\tau')\to L_{\tau'}$ by setting
$$p_{\tau'}|_{\Int(\sigma')}:\Int(\sigma')\to K_{\tau}(\sigma)\times\{m_{\tau}(\sigma')\},$$
where the first component is given by $p_{\tau}\circ\pi|_{\text{Star}^{\circ}(\tau')}$. Clearly, $p_{\tau'}$ maps cones to cones and by construction, we have
$$\pi_{\tau'}\circ p_{\tau'}=p_{\tau}\circ \pi|_{\text{Star}^{\circ}(\tau')}.$$
For $\sigma_1',\sigma_2'\in\Sigma'(n)$ such that $\tau'\subset\sigma_1',\sigma_2'$, by continuity of $\varphi$, we have
$$m(\sigma_1')-m(\sigma_2')=p_{\tau}^*(m_{\tau'}),$$
for some $m_{\tau'}\in\tau^{\perp}\cap M=M_{\tau}$. As $\iota_{\tau}^*:M\to M_{\tau}$ is the left inverse of the inclusion $p_{\tau}^*:M_{\tau}\to M$, we have
$$m_{\tau'}(\sigma_1')-m_{\tau'}(\sigma_2')=\iota_{\tau}^*(m(\sigma_1')-m(\sigma_2'))=\iota_{\tau}^*p_{\tau}^*(m_{\tau'})=m_{\tau'}.$$
Hence
\begin{equation}\label{eqn:continous}
    p_{\tau}^*\left(m_{\tau'}(\sigma_1')-m_{\tau'}(\sigma_2')\right)=m(\sigma_1')-m(\sigma_2').
\end{equation}
For $\tau'$ and a maximal cone $\sigma'\supset\tau'$, the function $$f_{\tau'}:=p_{\tau}^*m_{\tau'}(\sigma')-m(\sigma')\in M$$
is independent of $\sigma'\supset\tau'$ and hence a linear function defined on $\text{Star}^{\circ}(\tau')$. Thus
\begin{equation}\label{eqn:diffence_affine}
    p_{\tau'}^*\varphi_{\tau'}=\varphi+f_{\tau'}
\end{equation}
on $\text{Star}^{\circ}(\tau')$. We can generalize this to arbitrary pair of stratum as follows. For $\tau_2'\subset\tau_1'$, we have $\text{Star}^{\circ}(\tau_1')\subset \text{Star}^{\circ}(\tau_2')$, so the difference $$p_{\tau_1'}^*\varphi_{\tau_1'}-p_{\tau_2'}^*\varphi_{\tau_2'}$$
is a linear function on $\text{Star}^{\circ}(\tau_1')$. As a whole, we proved the following
	
\begin{theorem}\label{thm:difference=affine}
For any $\tau\in\Sigma$, by choosing a lift of the projection $p_{\tau}:N\to N_{\tau}$, there is a collection of tropical Lagrangian multi-sections $\{\bb{L}_{\tau'}\}_{\tau':\pi(\tau')=\tau}$ over $\Sigma_{\tau}$ such that, for each lift $\tau'$ of $\tau$, there is a map $p_{\tau'}:\text{Star}^{\circ}(\tau')\to L_{\tau'}$ so that
$$\pi_{\tau'}\circ p_{\tau'}=p_{\tau}\circ \pi|_{\text{Star}^{\circ}(\tau')}.$$
Moreover, for $\tau_2'\subset\tau_1'$, the difference
$$p_{\tau_1'}^*\varphi_{\tau_1'}-p_{\tau_2'}^*\varphi_{\tau_2'}$$
is an integral linear function on $\text{Star}^{\circ}(\tau_1')$.
\end{theorem}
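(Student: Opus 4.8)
The plan is to assemble the localizations $\bb{L}_{\tau'}$ and the projections $p_{\tau'}$ constructed above into the statement; the only genuine content is the projection identity and the fact that the relevant differences are honest integral linear functions. First I would fix a lift $\iota_{\tau}:N_{\tau}\to N$ of $p_{\tau}$ and, for every lift $\tau'$ of $\tau$, read off the slopes $m_{\tau}(\sigma')=\iota_{\tau}^*m(\sigma')$ of the maximal cones $\sigma'\supset\tau'$ to build the fan $\Sigma_{\tau'}'$, the space $L_{\tau'}=|\Sigma_{\tau}|\times_{\Sigma_{\tau}}\Sigma_{\tau'}'$, the multiplicity $\mu_{\tau'}$ and the piecewise linear function $\varphi_{\tau'}$. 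This is exactly the localization $\bb{L}_{\tau'}$ over $\Sigma_{\tau}$ described above, so that part requires no new argument and already supplies the collection $\{\bb{L}_{\tau'}\}_{\pi(\tau')=\tau}$.

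Next I would verify the projection identity. Defining $p_{\tau'}$ cone-by-cone on $\text{Star}^{\circ}(\tau')$ with first component $p_{\tau}\circ\pi$ and second component the constant slope $m_{\tau}(\sigma')$, the relation $\pi_{\tau'}\circ p_{\tau'}=p_{\tau}\circ\pi|_{\text{Star}^{\circ}(\tau')}$ is immediate from the description of $\pi_{\tau'}$ as the first projection of the fibre product. The substantive step is the continuity identity (\ref{eqn:continous}): for maximal cones $\sigma_1',\sigma_2'\supset\tau'$, continuity of $\varphi$ across $\tau'$ forces the two linear pieces to agree on the span of $\tau$, so that $m(\sigma_1')-m(\sigma_2')$ lies in $\tau^{\perp}\cap M=M_{\tau}$, i.e. in the image of $p_{\tau}^*$; applying the left inverse $\iota_{\tau}^*$ of $p_{\tau}^*$ then yields $p_{\tau}^*\big(m_{\tau'}(\sigma_1')-m_{\tau'}(\sigma_2')\big)=m(\sigma_1')-m(\sigma_2')$.

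The main obstacle, and the heart of the matter, is showing that $f_{\tau'}:=p_{\tau}^*m_{\tau'}(\sigma')-m(\sigma')$ does not depend on the maximal cone $\sigma'\supset\tau'$. This is precisely what (\ref{eqn:continous}) delivers: the two candidate values obtained from $\sigma_1'$ and $\sigma_2'$ differ by $p_{\tau}^*\big(m_{\tau'}(\sigma_1')-m_{\tau'}(\sigma_2')\big)-\big(m(\sigma_1')-m(\sigma_2')\big)$, which vanishes. Hence $f_{\tau'}\in M$ is a single integral linear function on all of $\text{Star}^{\circ}(\tau')$, and since $p_{\tau'}$ pulls the slope $m_{\tau}(\sigma')$ of $\varphi_{\tau'}$ back to $p_{\tau}^*m_{\tau'}(\sigma')$ on each $\Int(\sigma')$, we obtain the cone-wise identity $p_{\tau'}^*\varphi_{\tau'}=\varphi+f_{\tau'}$ of (\ref{eqn:diffence_affine}).

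Finally, for nested lifts $\tau_2'\subset\tau_1'$ I would note $\text{Star}^{\circ}(\tau_1')\subset\text{Star}^{\circ}(\tau_2')$, so both instances of (\ref{eqn:diffence_affine}) are valid on $\text{Star}^{\circ}(\tau_1')$, and subtract them:
$$p_{\tau_1'}^*\varphi_{\tau_1'}-p_{\tau_2'}^*\varphi_{\tau_2'}=(\varphi+f_{\tau_1'})-(\varphi+f_{\tau_2'})=f_{\tau_1'}-f_{\tau_2'},$$
a difference of elements of $M$ and hence an integral linear function on $\text{Star}^{\circ}(\tau_1')$, as required.
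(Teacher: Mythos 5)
Your proposal is correct and follows essentially the same route as the paper: the localization construction, the continuity identity $p_{\tau}^*\bigl(m_{\tau'}(\sigma_1')-m_{\tau'}(\sigma_2')\bigr)=m(\sigma_1')-m(\sigma_2')$ obtained by applying $\iota_{\tau}^*$, the resulting independence of $f_{\tau'}=p_{\tau}^*m_{\tau'}(\sigma')-m(\sigma')$ from $\sigma'\supset\tau'$, and the subtraction of the two instances of $p_{\tau'}^*\varphi_{\tau'}=\varphi+f_{\tau'}$ on $\text{Star}^{\circ}(\tau_1')\subset\text{Star}^{\circ}(\tau_2')$. No gaps.
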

	
Recall the definition of separability introduced in \cite{Suen_trop_lag}, Definition 3.13.

\begin{definition}\label{def:separable}
	A tropical Lagrangian multi-section $\bb{L}=(L,\Sigma_L,\mu,\pi,\varphi)$ over a fan $\Sigma$ is said to be \emph{separable} if it satisfies the following condition: For any $\tau\in\Sigma$ and distinct lifts $\tau^{(\alpha)},\tau^{(\beta)}\in\Sigma_L$ of $\tau$, we have $\varphi|_{\tau^{(\alpha)}}\neq\varphi|_{\tau^{(\beta)}}$.
\end{definition}

Separability is preserved under localization.

\begin{proposition}\label{prop:strata_separable}
    If $\bb{L}$ is separable, the tropical Lagrangian multi-section $\bb{L}_{\tau'}$ is also separable.
\end{proposition}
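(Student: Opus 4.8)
The plan is to unwind the separability condition for $\bb{L}_{\tau'}$ over $\Sigma_\tau$ directly and then reduce it to the separability of $\bb{L}$ by means of the continuity relation \eqref{eqn:continous}. Recall that the cones of $\Sigma_{\tau'}'$ are the $K_{\tau'}(\sigma')$ with $\sigma'\supset\tau'$, that $\pi_{\tau'}$ sends $K_{\tau'}(\sigma')$ to $K_\tau(\sigma)$ with $\sigma=\pi(\sigma')$, and that $\varphi_{\tau'}$ restricts on $K_{\tau'}(\sigma')$ to the linear function of slope $m_{\tau'}(\sigma')=m_\tau(\sigma')$. Thus a cone $\rho\in\Sigma_\tau$ has the form $K_\tau(\sigma)$, its lifts in $\Sigma_{\tau'}'$ are the $K_{\tau'}(\sigma')$ indexed by the distinct cones $\sigma'\supset\tau'$ with $\pi(\sigma')=\sigma$, and on such a lift $\varphi_{\tau'}$ has slope $m_{\tau'}(\sigma')$. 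So the statement to prove is: if $\sigma_1'\neq\sigma_2'$ are distinct lifts of $\sigma$ both containing $\tau'$, then $m_{\tau'}(\sigma_1')\neq m_{\tau'}(\sigma_2')$.

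First I would record the input from separability of $\bb{L}$: applied to the cone $\sigma\in\Sigma$, it yields $m(\sigma_1')\neq m(\sigma_2')$ for the two distinct lifts $\sigma_1',\sigma_2'$ of $\sigma$. Next I would invoke the continuity relation \eqref{eqn:continous}, which holds for any two cones sharing the common face $\tau'$ and reads
\[
p_\tau^*\bigl(m_{\tau'}(\sigma_1')-m_{\tau'}(\sigma_2')\bigr)=m(\sigma_1')-m(\sigma_2').
\]
Since the right-hand side is nonzero and $p_\tau^*\colon M_\tau\to M$ is injective (being dual to the surjection $p_\tau\colon N\to N_\tau$), the argument $m_{\tau'}(\sigma_1')-m_{\tau'}(\sigma_2')$ on the left must also be nonzero. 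Hence $\varphi_{\tau'}|_{K_{\tau'}(\sigma_1')}\neq\varphi_{\tau'}|_{K_{\tau'}(\sigma_2')}$, which is exactly the separability of $\bb{L}_{\tau'}$.

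The step I expect to be the main obstacle is the passage from $m(\sigma_i')$ to $m_{\tau'}(\sigma_i')=\iota_\tau^*m(\sigma_i')$: because $\iota_\tau^*$ is only a one-sided inverse of $p_\tau^*$ and is generally not injective, one cannot deduce $m_{\tau'}(\sigma_1')\neq m_{\tau'}(\sigma_2')$ from $m(\sigma_1')\neq m(\sigma_2')$ naively. What rescues the argument is that the relevant slope difference lies in $\tau^\perp=p_\tau^*(M_\tau)$ — precisely the force of continuity of $\varphi$ across $\tau'$ encoded in \eqref{eqn:continous} — and on this subspace $\iota_\tau^*$ is a genuine inverse of $p_\tau^*$. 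Equivalently, using $p_\tau^*\bigl(K_\tau(\sigma)^\perp\bigr)=\sigma^\perp$, the map $p_\tau^*$ descends to an injection $M_\tau/K_\tau(\sigma)^\perp\hookrightarrow M/\sigma^\perp$ sending $m_{\tau'}(\sigma')$ to $m(\sigma')$, and injectivity transports distinctness of the $m(\sigma_i')$ to that of the $m_{\tau'}(\sigma_i')$.

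Finally I would flag one bookkeeping caveat. Separability of $\bb{L}_{\tau'}$ must be verified over every cone $\rho\in\Sigma_\tau$, not merely the maximal ones: two lifts of a lower-dimensional cone may carry distinct slopes that happen to agree on a sub-face of the ambient maximal cones, so the condition does not reduce to the top-dimensional case. I must therefore allow $\sigma$ non-maximal, in which case $m(\sigma')$ lives in $M/\sigma^\perp$ rather than in $M$; the identity \eqref{eqn:continous} and the injectivity of the induced map $M_\tau/K_\tau(\sigma)^\perp\hookrightarrow M/\sigma^\perp$ persist by the same computation, so the argument goes through after replacing $p_\tau^*$ by its induced quotient map. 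This quotient bookkeeping is the only routine part and involves no new idea.
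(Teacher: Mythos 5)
Your proposal is correct and follows essentially the same route as the paper: both reduce separability of $\bb{L}_{\tau'}$ to the identity $p_\tau^*\bigl(m_{\tau'}(\sigma_1')-m_{\tau'}(\sigma_2')\bigr)=m(\sigma_1')-m(\sigma_2')$ coming from the affineness of $p_{\tau'}^*\varphi_{\tau'}-\varphi$ on $\mathrm{Star}^{\circ}(\tau')$, then invoke separability of $\bb{L}$ together with injectivity of $p_\tau^*$ (the paper phrases this by evaluating on a suitable $v\in N$). Your extra remarks on the non-injectivity of $\iota_\tau^*$ and on checking non-maximal cones are sound bookkeeping but do not change the argument.
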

\begin{proof}
Let $K_{\tau}(\sigma)\in\Sigma_{\tau}$. Suppose $K_{\tau'}(\sigma^{(\alpha)}),K_{\tau'}(\sigma^{(\beta)})$ are two distinct lift of $K_{\tau}(\sigma)$ and contain distinct lifts of $K_{\tau}(\sigma)$. In particular, $\sigma^{(\alpha)}\neq\sigma^{(\beta)}$. As we have seen, the difference $p_{\tau'}^*\varphi_{\tau'}-\varphi$ is an affine function on $\text{Star}^{\text{o}}(\tau')$. Thus
$$m(\sigma^{(\alpha)})-m(\sigma^{(\beta)})=p_{\tau}^*(m_{\tau'}(\sigma^{(\alpha)})-m_{\tau'}(\sigma^{(\alpha)})).$$
By separability, $m(\sigma^{(\alpha)})\neq m(\sigma^{(\beta)})$. Hence for any $v\in\tau$,
$$\left(m_{\tau'}(\sigma^{(\alpha)})-m_{\tau'}(\sigma^{(\beta)})\right)(p_{\tau}(v))=\left(m(\sigma^{(\alpha)})-m(\sigma^{(\beta)})\right)(v)\neq 0.$$
Hence $\bb{L}_{\tau'}$ is also separable.
\end{proof}

The relation between restriction and localization is given by the following

\begin{theorem}\label{thm:bundle_localization}
    Let $\cu{E}$ be a toric vector bundle on $X_{\Sigma}$ and $X_{\tau}\subset X_{\Sigma}$ be a toric strata. Let $\cu{E}_{\tau}:=\cu{E}|_{X_{\tau}}$. By choosing a lift of $p_{\tau}:N\to N/(\bb{R}\tau\cap N)$, $\cu{E}_{\tau}^{(\alpha)}$ admits a structure of toric vector bundle over $X_{\tau}$. Moreover, if $\cu{E}_{\tau}^{(\alpha)}$ is an indecomposable summand of $\cu{E}_{\tau}$, the associated tropical Lagrangian multi-section $\bb{L}_{\tau}^{(\alpha)}$ of $\cu{E}_{\tau}^{(\alpha)}$ is a localization of $\bb{L}_{\cu{E}}$ along a unique cone.
\end{theorem}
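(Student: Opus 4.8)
The plan is to identify the equivariant (Klyachko) data of the restriction $\cu{E}_{\tau}$ with the slope data defining the localizations in Appendix \ref{sec:appB}, and then to isolate a single indecomposable summand using separability. Recall that $X_{\tau}=V(\tau)$ is an orbit closure on which the subtorus $(\bb{R}\tau\cap N)\otimes\bb{C}^{\times}$ acts trivially, while the quotient $N_{\tau}\otimes\bb{C}^{\times}$ acts as the effective torus. First I would establish the toric structure: a lift $\iota_{\tau}$ of $p_{\tau}$ splits the projection $N\otimes\bb{C}^{\times}\to N_{\tau}\otimes\bb{C}^{\times}$, realizing the quotient as a subtorus of the big torus, and restricting the ambient equivariant structure of $\cu{E}$ along this subtorus endows $\cu{E}_{\tau}=\cu{E}|_{X_{\tau}}$ with an $N_{\tau}\otimes\bb{C}^{\times}$-equivariant, hence toric, structure over $X_{\tau}$. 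By Theorem \ref{thm:kyl_shift}(1) together with Corollary \ref{cor:shift}, each indecomposable summand $\cu{E}_{\tau}^{(\alpha)}$ is again a toric vector bundle, so its associated tropical Lagrangian multi-section $\bb{L}_{\tau}^{(\alpha)}$ is defined.

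Next I would match the discrete data. For a maximal cone $\sigma\supset\tau$, the torus-fixed point $x_{\sigma}$ lies in $X_{\tau}$, and the weights of the $N\otimes\bb{C}^{\times}$-action on the fibre $\cu{E}_{x_{\sigma}}$ are exactly the slopes $m(\sigma')$ carried by the sheets $\sigma'$ of $\bb{L}_{\cu{E}}$ lying over $\sigma$. Composing with $\iota_{\tau}$ converts these into the $N_{\tau}\otimes\bb{C}^{\times}$-weights $\iota_{\tau}^{*}m(\sigma')=m_{\tau}(\sigma')$, which are precisely the values $\varphi_{\tau'}|_{K_{\tau'}(\sigma')}$ assigned in the localization construction of Appendix \ref{sec:appB}; moreover the continuity of $\varphi$ across a wall $K_{\tau}(\sigma_1\cap\sigma_2)$ is exactly equation \eqref{eqn:continous}. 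Thus the cover, multiplicities, fan structure and piecewise linear function attached to $\cu{E}_{\tau}$ over $\Sigma_{\tau}$ agree, sheet by sheet, with those of the family $\{\bb{L}_{\tau'}\}_{\pi(\tau')=\tau}$. Since $(\bb{R}\tau\cap N)\otimes\bb{C}^{\times}$ acts trivially on the base, its isotypic decomposition of $\cu{E}_{\tau}$ is a decomposition into subbundles; by uniqueness of indecomposable summands (Proposition \ref{prop:kly_summand_unique}) the summand $\cu{E}_{\tau}^{(\alpha)}$ lies in a single isotypic component, hence is supported, under the above identification, on the open star $\text{Star}^{\circ}(\tau')$ of a single lift $\tau'$ of $\tau$. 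Comparing piecewise linear functions then gives $\bb{L}_{\tau}^{(\alpha)}=\bb{L}_{\tau'}$, exhibiting $\bb{L}_{\tau}^{(\alpha)}$ as the localization of $\bb{L}_{\cu{E}}$ along $\tau'$.

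For uniqueness I would invoke separability. As $\cu{E}_{\tau}^{(\alpha)}$ is indecomposable, $\bb{L}_{\tau}^{(\alpha)}$ is separable (Definition 3.13 and Proposition 3.21 in \cite{Suen_trop_lag}), and by Proposition \ref{prop:strata_separable} any localization equal to it is separable as well. The localizing cone is the unique lift $\tau'$ whose open star consists of the sheets of $\cu{E}_{\tau}^{(\alpha)}$; separability guarantees that distinct lifts carry distinct slope patterns, so this sheet set is unambiguously recovered from the slope data of $\cu{E}_{\tau}^{(\alpha)}$. Concretely, over the apex $K_{\tau}(\tau)=\{0\}$ each localization has a single lift $K_{\tau'}(\tau')$ with slope $m_{\tau}(\tau')$, and an equality $\bb{L}_{\tau_1'}=\bb{L}_{\tau_2'}=\bb{L}_{\tau}^{(\alpha)}$ would force these apex slopes to coincide and, propagating through the common cover by \eqref{eqn:continous}, force $\tau_1'=\tau_2'$. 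This yields the uniqueness of the cone.

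The step I expect to be the main obstacle is the middle one: matching not merely the unordered multiset of weights at each fixed point but the entire branched-cover datum — the wall gluings and the multiplicities — and then certifying that a single indecomposable summand corresponds to exactly one lift. The delicacy is that $\bb{L}_{\cu{E}}$ need not be separable, so several summands of $\cu{E}_{\tau}$ may share a weight; it is the connectivity of $\text{Star}^{\circ}(\tau')$ together with the Krull--Schmidt uniqueness of Proposition \ref{prop:kly_summand_unique} that make the summand-to-lift assignment well defined, after which separability of the summand itself closes the uniqueness argument.
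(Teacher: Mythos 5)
Your first two steps track the paper's own argument closely: the lift $\iota_{\tau}$ of $p_{\tau}$ is exactly how the paper endows $\cu{E}_{\tau}$ with its toric structure, Corollary \ref{cor:shift} is what makes the indecomposable summand toric, and your identification of fixed-point weights with slopes together with the isotypic decomposition under the subtorus $(\bb{R}\tau\cap N)\otimes\bb{C}^{\times}$ is a legitimate (and more detailed) unpacking of the paper's terse ``it is by construction that $\bb{L}_{\tau}$ is a localization of $\bb{L}_{\cu{E}}$'' followed by the inclusion $\bb{L}_{\tau}^{(\alpha)}\subset\bb{L}_{\tau}$. (A small citation slip: for the claim that an indecomposable summand lands in a single isotypic piece you want Krull--Schmidt for ordinary vector bundles on a complete variety, which is what Corollary \ref{cor:shift} invokes, rather than Proposition \ref{prop:kly_summand_unique}, which is the toric statement.)

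The genuine gap is in your uniqueness step, and it is twofold. First, the ``apex slope'' comparison is vacuous: $m_{\tau}(\tau')$ lives in $M_{\tau}/(\tau^{\perp}\cap M_{\tau})$, and since $M_{\tau}$ is canonically $\tau^{\perp}\cap M$ this quotient is trivial, so the apex carries no information; more generally the localization only retains $\iota_{\tau}^{*}m(\sigma')$, which forgets precisely the $\bb{R}\tau$-component of the slopes, and that component is the only thing distinguishing different lifts of $\tau$. So neither the apex nor equation \eqref{eqn:continous} can separate $\tau_1'$ from $\tau_2'$. Second, you diagnose the wrong hypothesis: you flag as the ``main obstacle'' that $\bb{L}_{\cu{E}}$ need not be separable and try to substitute separability of the summand $\bb{L}_{\tau}^{(\alpha)}$ via Proposition \ref{prop:strata_separable}. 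But $\bb{L}_{\cu{E}}$, being the tropical Lagrangian multi-section associated to a toric vector bundle, \emph{is} separable, and this is exactly the input the paper's proof uses: separability says distinct lifts $\tau_1'\neq\tau_2'$ of $\tau$ carry distinct linear functions $\varphi|_{\tau_i'}$, i.e.\ distinct characters of the subtorus acting trivially on $X_{\tau}$, so the isotypic pieces of $\cu{E}_{\tau}$ are in bijection with the lifts of $\tau$ and an indecomposable summand sits in exactly one of them. Your second step already contains all the machinery needed; the fix is simply to route uniqueness through separability of the ambient $\bb{L}_{\cu{E}}$ rather than of $\bb{L}_{\tau}^{(\alpha)}$, whose separability by itself cannot rule out two distinct lifts inducing the same localized data.
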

\begin{proof}
    Let $\iota_{\tau}:N/(\bb{R}\tau\cap N)\to N$ be a lift of the projection $p_{\tau}:N\to N/(\bb{R}\tau\cap N)$. Then it is easy to check that
    $$\ol{\lambda}\cdot v:=\iota_{\tau}(\ol{\lambda})\cdot v$$
    defines a toric vector bundle structure on $\cu{E}_{\tau}$ over $X_{\tau}$. It is by construction that the tropical Lagrangian multi-section $\bb{L}_{\tau}$ associated to $\cu{E}_{\tau}$ is a localization of $\bb{L}_{\cu{E}}$. Since $\cu{E}_{\tau}^{(\alpha)}$ is a summand, it must be toric by Corollary \ref{cor:shift}. There is an inclusion $\bb{L}_{\tau}^{(\alpha)}\subset\bb{L}_{\tau}$ which covering $|\Sigma|$. Hence $\bb{L}_{\cu{E}_{\tau}^{(\alpha)}}$ is also a localization of $\bb{L}_{\cu{E}}$. Separability of $\bb{L}_{\cu{E}}$ implies the cone that we localize is unique.
\end{proof}

\bibliographystyle{amsplain}
\bibliography{geometry}

\providecommand{\bysame}{\leavevmode\hbox to3em{\hrulefill}\thinspace}
\providecommand{\MR}{\relax\ifhmode\unskip\space\fi MR }
% \MRhref is called by the amsart/book/proc definition of \MR.
\providecommand{\MRhref}[2]{%
  \href{http://www.ams.org/mathscinet-getitem?mr=#1}{#2}
}
\providecommand{\href}[2]{#2}
\begin{thebibliography}{10}

\bibitem{AJ}
M.~Akaho and D.~Joyce, \emph{Immersed {L}agrangian {F}loer theory}, J.
  Differential Geom. \textbf{86} (2010), no.~3, 381--500.

\bibitem{Kly}
A~Klyachko Alexander, \emph{Equivariant bundles on toral varieties}, Ann. of
  Math. (2) \textbf{35} (1990), 337--375.

\bibitem{CS_SYZ_imm_Lag}
K.~Chan and Y.-H. Suen, \emph{S{YZ} transforms for immersed {L}agrangian
  multisections}, Trans. Amer. Math. Soc. \textbf{372} (2019), no.~8,
  5747--5780.

\bibitem{CMS_k3bundle}
Kwokwai Chan, Ziming~Nikolas Ma, and Yat-Hin Suen, \emph{Tropical {L}agrangian
  multi-sections and smoothing of locally free sheaves over degenerate
  {C}alabi-{Y}au surfaces}, to be appear in {A}dv. in {M}ath,
  \href{https://arxiv.org/abs/2004.00523}{arXiv:2004.00523}.

\bibitem{FOOO1}
Kenji Fukaya, Yong-Geun Oh, Hiroshi Ohta, and Kaoru Ono, \emph{Lagrangian
  intersection {F}loer theory: anomaly and obstruction. {P}art {I}}, AMS/IP
  Studies in Advanced Mathematics, vol.~46, American Mathematical Society,
  Providence, RI; International Press, Somerville, MA, 2009.

\bibitem{GS1}
M.~Gross and B.~Siebert, \emph{Mirror symmetry via logarithmic degeneration
  data. {I}}, J. Differential Geom. \textbf{72} (2006), no.~2, 169--338.

\bibitem{GS2}
\bysame, \emph{Mirror symmetry via logarithmic degeneration data, {II}}, J.
  Algebraic Geom. \textbf{19} (2010), no.~4, 679--780.

\bibitem{GS11}
\bysame, \emph{From real affine geometry to complex geometry}, Ann. of Math.
  (2) \textbf{174} (2011), no.~3, 1301--1428.

\bibitem{Kaneyama_classification}
Tamafumi Kaneyama, \emph{On equivariant vector bundles on an almost homogeneous
  variety}, Nagoya Math. J. \textbf{57} (1975), 65--86. \MR{376680}

\bibitem{HMS}
M.~Kontsevich, \emph{Homological algebra of mirror symmetry}, Proceedings of
  the {I}nternational {C}ongress of {M}athematicians, {V}ol.\ 1, 2 ({Z}\"urich,
  1994), Birkh\"auser, Basel, 1995, pp.~120--139.

\bibitem{branched_cover_fan}
S.~Payne, \emph{Toric vector bundles, branched covers of fans, and the
  resolution property}, J. Algebraic Geom. \textbf{18} (2009), no.~1, 1--36.

\bibitem{trop_spaces}
C.~Renzo, G.~Andreas, and M.~Hannah, \emph{Tropical {$\psi$} classes}, preprint
  (2021), \href{https://arxiv.org/abs/2009.00586}{arXiv:2009.00586}.

\bibitem{Ruddat_Siebert}
H.~Ruddat and B.~Siebert, \emph{Period integrals from wall structures via
  tropical cycles, canonical coordinates in mirror symmetry and analyticity of
  toric degenerations}, Publ. Math. Inst. Hautes \'{E}tudes Sci. \textbf{132}
  (2020), 1--82.

\bibitem{SYZ}
A.~Strominger, S.-T. Yau, and E.~Zaslow, \emph{Mirror symmetry is
  {$T$}-duality}, Nuclear Phys. B \textbf{479} (1996), no.~1-2, 243--259.

\bibitem{Suen_trop_lag}
Y.-H. Suen, \emph{Tropical {L}agrangian multi-sections and toric vector
  bundles}, preprint (2021),
  \href{https://arxiv.org/abs/2106.05705}{arXiv:2106.05705}.

\end{thebibliography}
	
\end{document}